\newtheorem{theorem}{Theorem}[section]
\newtheorem{lemma}[theorem]{Lemma}
\newtheorem{corollary}[theorem]{Corollary}
\theoremstyle{definition}
\newtheorem{definition}[theorem]{Definition}
\theoremstyle{remark}
\numberwithin{equation}{section}
\begin{document}

\begin{center}
{\Large \bf  Orlicz-Legendre Ellipsoids}
\end{center}

\vskip 20pt

\begin{center}
{{\bf Du~~Zou,~~~~Ge~~Xiong}\\~~ \\ Department of Mathematics, Shanghai University, Shanghai, 200444, PR China}
\end{center}

\vskip 10pt

\footnotetext{E-mail address: xiongge@shu.edu.cn}
\footnotetext{Research of the authors was supported by NSFC No. 11471206.}

\begin{center}
\begin{minipage}{14cm}
{\bf Abstract}  The Orlicz-Legendre ellipsoids, which are in the framework of
emerging dual Orlicz Brunn-Minkowski theory, are introduced for the first time.
They are in some sense dual to the recently found Orlicz-John ellipsoids,
and have largely generalized the classical Legendre ellipsoid of inertia. Several new
affine isoperimetric inequalities are established. The connection between the characterization
of Orlicz-Legendre ellipsoids and isotropy of measures is demonstrated.

\vskip 10pt{{\bf 2000 Mathematics Subject Classification:} 52A40.}

\vskip 10pt{{\bf Keywords:}  Orlicz Brunn-Minkowski theory; Legendre ellipsoid; L${\rm \ddot{o}}$wner ellipsoid; Isotropy}

\vskip 10pt
\end{minipage}
\end{center}

\vskip 25pt
\section{\bf Introduction}
\vskip 10pt

Corresponding to each body in Euclidean $n$-space $\mathbb R^n$,
there is a unique ellipsoid with the following property: The moment
of inertia of the ellipsoid and the moment of inertia of the body
are the same about \emph{ every} $1$-dimensional subspace of
$\mathbb R^n$. This ellipsoid is called the  \emph{ Legendre ellipsoid}
of the body. The Legendre ellipsoid is a well-known
concept from classical mechanics, and is closely related with  the
long-sanding unsolved maximal slicing problem. See, e.g.,
Lindenstrauss and Milman \cite{LindenstraussMilman}, and Milman and Pajor \cite{Milman}.

The Legendre ellipsoid is an object in the \emph{dual}
Brunn-Minkowski theory, which was originated by Lutwak \cite{Lutwak}
and achieved great developments since 1980s. See, e.g.,
\cite{BourgainZhang,Gardner1,Gardner2,Gardner4,Lutwak3,Lutwak4,Zhang1,Zhang2}.
It is remarkable that  for each convex body (compact convex subset
with non-empty interior) $K$ in $\mathbb R^n$, Lutwak, Yang and
Zhang \cite{LYZ1} introduced a new ellipsoid by using the notion of
$L_{2}$-curvature, which is now called the \emph{LYZ ellipsoid} and
is precisely the dual analogue of the Legendre ellipsoid.

Following LYZ \cite{LYZ1}, we write $\Gamma_{2} K$ and $\Gamma_{-2} K$ for
the Legendre ellipsoid and LYZ ellipsoid, respectively. In \cite{LYZ4},
LYZ extended the domain of $\Gamma_{-2}$ to star-shaped sets and showed
the relationship between the two ellipsoids: If $K$ is a star-shaped set, then
$\Gamma_{-2} K\subset\Gamma_2 K,$
with equality if and only if $K$ is an ellipsoid centered at the origin. This inclusion
is the geometric analogue of one of the basic inequalities in information theory - the
\emph{Cramer-Rao inequality}.  When viewed as suitably normalized matrix-valued operators on
the space of convex bodies, it was proved by Ludwig \cite{Ludwig1} that the Legendre
ellipsoid and the LYZ ellipsoid are the only linearly invariant operators that
satisfy the inclusion-exclusion principle.
The Legendre ellipsoid has also applications in Finsler geometry \cite{Matveev}.

In the geometry of convex bodies, many extremal problems of an affine nature often have
ellipsoids as extremal bodies.  Besides the above mentioned Legendre ellipsoid and
LYZ ellipsoid,  the \emph{John ellipsoid} ${\rm J}K$ \cite{John2} and the  \emph{L$\ddot{o}$wner ellipsoid}
${\rm L} K$ are of fundamental importance. Since the object considered in this paper
is \emph{dual} to the John ellipsoid, in what follows, we recall the John ellipsoid in detail.

Associated with each convex body $K$ in  $\mathbb R^n$,
its John ellipsoid ${\rm J}K$ is the unique ellipsoid
of maximal volume contained in $K$. The John ellipsoid has many applications in
convex geometry, functional analysis, PDEs, etc. Particularly, by combining the isotropic
characterization of the John ellipsoid and the celebrated Brascamp-Lieb inequality, it has
powerful effect on attacking  reverse isoperimetric problems. See, e.g., \cite{Ball1, Ball2,  LYZ5,LYZ6,LYZ7, Schuster}.

Since 2005, the family of John ellipsoid has expanded rapidly,
and experienced the $L_p$ stage \cite{LYZ6} and the very recent Orlicz stage \cite{ZouXiong}.
It is interesting that with the expansion of the family,  several  ellipsoids,
including the LYZ ellipsoid, are found to be close relatives of the John ellipsoid. We do a bit review on this point.

Motivated by the study of geometry of $L_p$ Brunn-Minkowski theory
(See, e.g., \cite{L1, L2, LYZ2}),  LYZ \cite{LYZ6} introduced a family of ellipsoids,
called the $L_p$ John ellipsoids ${\rm E}_p K$, $p>0$. It is striking that
the bodies ${\rm E}_p K$ form a \emph {spectrum} linking  several fundamental objects in convex geometry:
If the John point of $K$, i.e., the center of ${\rm J}K,$ is at the origin, then ${\rm E}_\infty K$ is
precisely the classical John ellipsoid ${\rm J}K$.
The $L_2$ John ellipsoid ${\rm E}_2 K$ is just the LYZ ellipsoid. The $L_1$ John ellipsoid ${\rm E}_1 K$ is
the so-called \emph{Petty ellipsoid}. The volume-normalized Petty
ellipsoid  is obtained by minimizing the surface area of $K$ under
${\rm SL}(n)$ transformations of $K$  (\cite{Gian1,Petty1}).

Throughout this paper, we consider convex $\varphi:[0,\infty)\to[0,\infty)$,
that is strictly increasing and satisfies $\varphi(0)=0$.  Along the line of extension,
the authors of this paper originally introduced the  \emph{ Orlicz-John ellipsoids} \cite{ZouXiong}
${\rm E}_\varphi K$ for each convex
body $K$ with the origin in its interior, in the framework of booming Orlicz Brunn-Minkowski theory
(See, e.g., \cite{ Gardner5, Gardner6, HLYZ, Ludwig2,LYZ8, LYZ9}).
The new Orlicz-John ellipsoids ${\rm E}_\varphi K$ generalize LYZ's
$L_p$ John ellipsoids ${\rm E}_p K$ to the Orlicz setting, analogous
to the way that Orlicz norms  \cite{Rao} generalize $L_p$
norms. Indeed, If $\varphi(t)=t^p$, $1\le p<\infty$, then ${\rm E}_\varphi K$ precisely
turns to the $L_p$ John ellipsoid ${\rm E}_p K$.
If $p\to\infty$,  then ${\rm E}_{\varphi^p} K$ approaches to ${\rm E}_\infty K$.

The \emph{L$\ddot{o}$wner ellipsoid} ${\rm L} K$ is the unique ellipsoid of minimal volume
containing $K$, which is investigated widely  in the field of convex geometry and  local theory of Banach
spaces. We refer to, e.g., \cite{Ball1, Ball2,ChenZhouYang, Gian1,
Gruber1, Gruber2, Gruber3, Gruber4, John2, Klartag, Lewis, LindenstraussMilman,Ludwig3, Pisier, Zhu}.

As LYZ  \cite{LYZ4}  pointed out, there is in
fact a ``dictionary" correspondence between the Brunn-Minkowski theory and its
dual. In retrospect, the John ellipsoid, LYZ ellipsoid and Petty ellipsoid are objects within
the Brunn-Minkowski theory; while the Legendre ellipsoid and L$\rm \ddot{o}$wner ellipsoid are objects within the
dual Brunn-Minkowski theory. Along the idea of dictionary  relation, we are tempted to consider
the naturally posed problem:  What is the dual analogue of the newly found Orlicz-John ellipsoid?

One of the main task in this paper is to demonstrate this existence of such
a \emph{dual} analogue of Orlicz-John ellipsoid. Incidentally,  it precisely acts
as the \emph{spectrum} linking the Legendre ellipsoid and L$\rm \ddot{o}$wner ellipsoid.
So, this paper is a sequel of \cite{ZouXiong}.

For star bodies $K,L$ in $\mathbb R^n$, define the
\emph{normalized dual Orlicz mixed volume} ${\bar{\tilde V}}_\varphi(K,L)$ of $K$ and $L$
with respect to $\varphi$ by
\[{\bar{\tilde V}}_\varphi(K,L) = {\varphi ^{ - 1}}\left( {\int_{{S^{n - 1}}} {\varphi \left( {\frac{{{\rho _K}}}{{{\rho _L}}}} \right)dV_K^*} } \right).\]
Here, $S^{n-1}$ is the unit sphere in $\mathbb R^n$; $\rho_K$ and
$\rho_E$ are the radial functions of $K$ and $L$, respectively;
$V_K^*$ is the normalized dual conical measure of $K$, defined by
\[dV_K^*=\frac{\rho_K^n}{nV(K)}dS, \]
where $S$ is the spherical Lebesgue measure on $S^{n-1}$.

Enlightened by our work on Orlicz-John ellipsoids
\cite{ZouXiong}, we focus on

{\bf Problem ${\tilde S}_\varphi$.} \emph{Suppose  $K$ is a star
body in $\mathbb R^n$. Find an  ellipsoid $E$, amongst all
origin-symmetric ellipsoids, which solves the following  constrained
minimization problem:}
\[\mathop {{\rm{min}}}\limits_E V(E)\quad{\rm{subject}}\;{\rm{to}}\quad {\bar{\tilde V}}_\varphi (K,E) \le 1.\]

In Section 4, we prove that there exists a unique ellipsoid which
solves the above minimization problem. It is called the \emph{Orlicz-Legendre ellipsoid}
of $K$ with respect to $\varphi$, and denoted by
${\rm L}_\varphi K$. If $\varphi(t)=t^2$, then ${\rm L}_\varphi K$ is precisely
the Legendre ellipsoid $\Gamma_2 K$.

It is interesting that the Orlicz-Legendre ellipsoid mirrors the Orlicz-John ellipsoid.

Similar to the important property of Orlicz-John ellipsoid  ${\rm E}_\varphi K$,
in Section 5 we show that the Orlicz-Legendre ellipsoid ${\rm L}_\varphi K$ is
jointly continuous in $\varphi$ and $K$ . In Section 6, it is
proved that as $p\to\infty$,  ${\rm L}_{\varphi^{p}} K$ approaches to a common ellipsoid ${\rm L}_\infty K$,  the
unique ellipsoid of minimal volume containing $K$. This insight throws light on a connection between
Orlicz-Legendre ellipsoids and the L${\rm \ddot{o}}$wner ellipsoid.

In Section 7, we  establish a characterization of Orlicz-Legendre ellipsoids,
which is closely related to the isotropy of measures.

In general,  Orlicz-Legendre ellipsoids ${\rm L}_\varphi K$ do not  contain $K$. In Section 8, we  prove  that: If $K$
is a star body (about the origin) in $\mathbb R^n$, then
\[ V({\rm L}_\varphi K)\ge V(K),\]
with equality if and only if $K$ is an ellipsoid centered at the origin.

If $\varphi(t)=t^2$, it reduces to the celebrated
inequality: $V(\Gamma_2 K)\ge V(K),$ which goes back to Blaschke \cite{Blaschke},
John \cite{John1}, Milman and Pajor \cite{Milman}, Petty \cite{Petty2}, and also LYZ  \cite{LYZ1}.

\vskip 25pt
\section{\bf Preliminaries}
\vskip 10pt

\subsection{Notations}

The setting will be the  Euclidean $n$-space $\mathbb R^n$. As usual,
$x\cdot y$ denotes the standard inner product of $x$ and $y$ in
$\mathbb R^n$, and $V$ denotes  the $n$-dimensional volume.

In addition to its denoting absolute value, without confusion we  often use $|\cdot|$ to
denote the standard Euclidean norm, on occasion the total mass of a
measure,  and the absolute value of the determinant of an $n\times n$ matrix.

For a continuous real function $f$ defined on $S^{n-1}$,
write $\|f\|_\infty$ for the $L_\infty$ norm of $f$.
Let $\mathscr{L}^n$ denote the space of linear operators
from $\mathbb R^n$ to $\mathbb R^n$.
For $T\in \mathscr{L}^n$,  $T^t$ and $\|T\|$ denote the transpose and norm of $T$, respectively.

A finite positive Borel measure $\mu$ on $S^{n-1}$ is said to be
\emph{isotropic} if
\[ \frac{n}{|\mu|}\int_{{S^{n - 1}}} {{{(u \cdot v)}^2}d\mu (u)}= 1,\quad {\rm for\; all}\; v\in S^{n-1}.\]

For nonzero $x\in\mathbb R^n$, the notation $x\otimes x$ represents the rank 1
linear operator on $\mathbb R^n$ that takes $y$ to $(x\cdot y) x$.
It immediately gives
\[{\mathop{\rm tr}\nolimits} (x \otimes x) = |x{|^2}.\]
Equivalently, $\mu$ is isotropic if
\[\frac{n}{|\mu|}\int_{{S^{n - 1}}} {u \otimes ud\mu (u)}  =  {I_n},\]
where $I_n$ denotes the identity operator on $\mathbb R^n$.
For more information on the isotropy of measures, we refer to \cite{BR, Gian1, Gian2, Milman}.

\subsection{Orlicz norms}

Throughout this paper, $\Phi$ denotes the class of convex functions $\varphi:
[0,\infty)\to [0,\infty)$, that are strictly increasing and satisfy $\varphi(0)=0$.

We say  a sequence $\{\varphi_i\}_{i\in\mathbb N}\subset\Phi$ is
such that ${\varphi _i} \to {\varphi _0}\in\Phi$, provided
\[|{\varphi _i} - {\varphi _0}{|_I}:  =  \mathop {\max }\limits_{t \in I} |{\varphi _i}(t) - {\varphi _0}(t)| \to0,\]
for each compact interval $I\subset [0,\infty)$.

Let $\mu$ be a finite positive Borel measure on $S^{n-1}$.  For a
continuous function $f: S^{n-1}\to [0,\infty)$, the \emph{Orlicz norm} $ {\left\| f: \mu \right\|_\varphi }$ of $f$,
is defined by
\[{\left\| f:\mu \right\|_\varphi } = \inf \left\{ {\lambda  > 0:\frac{1}{{|\mu |}}\int_{{S^{n - 1}}} {\varphi \left( {\frac{f}{\lambda }} \right)d\mu }  \le \varphi (1)} \right\}.\]

If $\varphi(t)=t^p$, $1\le p<\infty$, then $\|f:\mu\|_\varphi$ is just the classical $L_p$ norm.
According to the context,  without confusion  we  write $\|f\|_\varphi$  for $\|f:\mu\|_\varphi$.

Lemma 2.1 was previously proved in \cite{HLYZ}, which will be used frequently.
\begin{lemma}\label{lem 2.1}
Suppose $\mu$ is a finite positive Borel measure on $S^{n-1}$ and
the function $f: S^{n-1}\to [0,\infty)$ is continuous and such that
$\mu(\{{f\ne 0}\})>0$. Then the function
\[\psi (\lambda ):  = \int_{{S^{n - 1}}} {\varphi \left( {\frac{f}{\lambda }} \right) d\mu },\;\;\lambda\in (0,\infty),\]
has the following properties:

\noindent (1) $\psi$ is continuous and strictly decreasing in $(0,\infty)$;

\noindent (2) $ \mathop {\lim }\limits_{\lambda  \to {0^ + }} \psi (\lambda) = \infty $;

\noindent (3) $\mathop {\lim\limits _{\lambda  \to \infty }}\psi (\lambda ) = 0$;

\noindent (4) $0 < {\psi ^{ - 1}}( a ) < \infty  $ for each $a\in (0,\infty)$.
\end{lemma}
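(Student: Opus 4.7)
The plan is to verify the four properties in order, leveraging continuity of $\varphi$ (which is automatic since $\varphi$ is convex on $[0,\infty)$) together with standard integration tools. A useful preliminary observation is that any $\varphi\in\Phi$ is coercive: by convexity, $\varphi(t)/t$ is nondecreasing on $(0,\infty)$, and since $\varphi(t_0)>0$ for some $t_0>0$, one has $\varphi(t)\ge(\varphi(t_0)/t_0)\,t\to\infty$ as $t\to\infty$. Also, $f$ is bounded on the compact sphere $S^{n-1}$, so $\|f\|_\infty<\infty$.

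For (1), strict monotonicity is pointwise: given $0<\lambda_1<\lambda_2$, strict monotonicity of $\varphi$ yields $\varphi(f/\lambda_1)>\varphi(f/\lambda_2)$ on the set $\{f>0\}$, so integrating and invoking the hypothesis $\mu(\{f\neq 0\})>0$ gives $\psi(\lambda_1)>\psi(\lambda_2)$. For continuity at $\lambda_0\in(0,\infty)$, on the neighborhood $[\lambda_0/2,2\lambda_0]$ the integrand is dominated by $\varphi(2\|f\|_\infty/\lambda_0)$, which is an integrable majorant since $\mu$ is a finite measure, so dominated convergence applied to any sequence $\lambda_n\to\lambda_0$ gives $\psi(\lambda_n)\to\psi(\lambda_0)$.

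For (2), as $\lambda\to 0^+$, coercivity forces $\varphi(f/\lambda)\to\infty$ pointwise on $\{f>0\}$, so Fatou's lemma yields
\[\liminf_{\lambda\to 0^+}\psi(\lambda)\ge\int_{\{f>0\}}\liminf_{\lambda\to 0^+}\varphi(f/\lambda)\,d\mu=\infty,\]
where the positivity $\mu(\{f\neq 0\})>0$ is essential. For (3), as $\lambda\to\infty$, $\varphi(f/\lambda)\to\varphi(0)=0$ pointwise, with $\varphi(\|f\|_\infty)$ as integrable majorant for all $\lambda\ge 1$, and dominated convergence gives $\psi(\lambda)\to 0$. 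Then (4) is immediate: parts (1)--(3) exhibit $\psi$ as a continuous strictly decreasing bijection from $(0,\infty)$ onto $(0,\infty)$, so $\psi^{-1}(a)\in(0,\infty)$ for every $a\in(0,\infty)$.

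The only step requiring any thought is the coercivity of $\varphi$ feeding into (2); without it, Fatou's lemma would give only a finite lower bound and the divergence at $0^+$ would fail. All remaining steps are routine applications of dominated convergence or Fatou's lemma once the boundedness of $f$ on the compact sphere has been recorded.
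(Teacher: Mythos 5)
Your proof is correct. Note that the paper does not actually supply a proof of Lemma 2.1 at all — it simply cites Haberl--Lutwak--Yang--Zhang, ``The even Orlicz Minkowski problem,'' for it — so there is no in-paper argument to compare against. Your argument is the standard one for such lemmas: the key preliminary that a convex, strictly increasing $\varphi$ with $\varphi(0)=0$ has $\varphi(t)/t$ nondecreasing, hence $\varphi(t)\to\infty$, correctly feeds Fatou's lemma in part (2); the dominated-convergence steps for continuity and for part (3) use valid constant majorants on a finite measure space; strict monotonicity uses $\mu(\{f\ne 0\})>0$ exactly where needed; and (4) follows from (1)--(3) by the intermediate value theorem. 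The only thing worth making fully explicit, if you wanted a belt-and-braces write-up, is that $\psi(\lambda)>0$ for each fixed $\lambda$ (choose $c>0$ with $\mu(\{f>c\})>0$, then $\psi(\lambda)\ge\varphi(c/\lambda)\,\mu(\{f>c\})>0$), which is what guarantees $\psi$ maps onto all of $(0,\infty)$ rather than onto a proper subinterval.
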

Consequently, the Orlicz norm ${\left\|f\right\|}_\varphi$ is strictly positive. Moreover,
\[{\left\| f \right\|_\varphi } = {\lambda _0}\quad \Longleftrightarrow\quad \frac{1}{{|\mu |}}\int_{{S^{n - 1}}} {\varphi \left( {\frac{f}{{{\lambda _0}}}} \right)d\mu }  = \varphi (1).\]

\subsection{Convex bodies and star bodies}

The \emph{support function} $h_K$ of a compact convex set $K$ in $\mathbb R^n$
is defined  by
\[ h_K(x)=\max\{x\cdot y: y\in K\},\quad {\rm for}\; x\in\mathbb R^n.\]
For $T\in{\rm GL}(n)$,  the support function of the image $TK=\{Tx:x\in K\}$ is given by
\[{h_{TK}}(x) = {h_K}({T^t}x). \]

As usual, a \emph{body} is a compact set with non-empty interior.
Write $\mathcal K^n_o$ for the class of convex bodies in $\mathbb R^n$ that
contain the origin in their interiors.  $\mathcal K^n_o$ is often equipped with the \emph{Hausdorff metric} $\delta_H$, which is defined  by
\[ {\delta _H}({K_1},{K_2}) =   {\max }\{ |h_{K_1}(u) - h_{K_2}(u)|: u\in S^{n-1}\},\quad {\rm for}\; K_1, K_2\in\mathcal K^n_o.\]
That is
\[\delta_H(K_1,K_2)=\|h_{K_1}-h_{K_2}\|_\infty.\]

Next, we turn to some basics on star bodies.

A set $K\subseteq\mathbb R^n$ is \emph{star-shaped}, if $\lambda x\in K$ for  $\forall (\lambda, x)\in [0,1]\times K$.
For a  non-empty, compact and star-shaped set  $K$ in $\mathbb R^n$,  its
\emph{radial function} $\rho_K$  is defined by
\[ \rho_K(x)=\sup\{\lambda\ge 0: \lambda x\in K\},\quad {\rm for}\; x\in \mathbb R^n\setminus \{o\}. \]
It is easily seen  that $\rho_K$ is homogeneous of degree $-1$. For $T\in {\rm GL}(n)$, we obviously have
\begin{equation}\label{(2.1)} \rho_{TK}(x)=\rho_K(T^{-1}x).
\end{equation}

A  star-shaped set $K$ is called a \emph{star body} about the origin $o$, if $o\in {\rm int}K$,
and its radial function $\rho_K$ is continuous on $S^{n-1}$.
Write $\mathcal{S}^n_o$ for the class of  star bodies about the origin $o$ in $\mathbb R^n$.
$\mathcal{S}^n_o$ is often equipped with the \emph{dual Hausdorff metric} $\tilde\delta_H$, which is defined by
\[ \tilde\delta_H(K_1,K_2)=\max\{|\rho_{K_1}(u)-\rho_{K_2}(u)|: u\in S^{n-1}\}, \quad {\rm for}\; K_1, K_2\in\mathcal{S}^n_o.\]
That is,
\[ \tilde\delta_H(K_1,K_2)=\|\rho_{K_1}-\rho_{K_2}\|_\infty. \]

The \emph{dual conical  measure} $\tilde V_K$, of a star body $K\in \mathcal{S}^n_o$,
is a Borel measure on $S^{n-1}$ defined by
\[d\tilde V_K=\frac{\rho_K^n}{n}dS. \]
It is convenient to use its normalization $ V^*_K$, given by $ V^*_K=\frac{\tilde V_K}{V(K)}$.
Observe that $V^*_K$ was firstly introduced by LYZ \cite{LYZ9} to define Orlicz centroid bodies.
Note that the dual conical measure differs from the
cone-volume measure (See, e.g., \cite{BLYZ1, BLYZ2, HLYZ, Henk,  LYZ8, Stancu, Xiong1}), but both are outgrowth from the cone measure
(See, e.g. \cite{ BGMN, Gromov, Naor}).

Note that for each Borel subset $\omega\subseteq S^{n-1}$, we also have
\[\tilde V_K(\omega)=V\left(K\cap\{su : s\ge0\; {\rm and }\; u\in\omega\}\right). \]
Thus, it follows that
\begin{equation}\label{(2.2)}
{\tilde V}_{TK}(\omega)={\tilde V}_K(\langle T^{-1}\omega\rangle), \quad {\rm for}\; T\in {\rm SL}(n),
\end{equation}
where $\langle T^{-1}\omega\rangle=\{\frac{T^{-1}u}{|T^{-1}u|}: u\in\omega\}$.

For $K\in\mathcal{K}^n_o$, its  \emph{ polar body} $K^*$ of $K$ is defined by
\[K^*=\{x\in\mathbb R^n: x\cdot y\le 1,\; {\rm for\; }\;  y\in K\}. \]

For $K\in\mathcal{K}^n_o$, we have
\begin{equation}\label{polarity1}
\rho_{K^*}(u)=\frac{1}{h_K(u)}\quad {\rm and}\quad h_{K^*}(u)=\frac{1}{\rho_K(u)}, \quad  {\rm for}\; u\in S^{n-1},
\end{equation}
and
\begin{equation}\label{polarity2}
(TK)^*=T^{-t}K^*, \quad {\rm for}\; T\in {\rm GL}(n).
\end{equation}

\subsection{Ellipsoids and linear operators}
Throughout, $\mathcal E^n $ is used exclusively to denote the class
of $n$-dimensional origin-symmetric ellipsoids in $\mathbb R^n$.

For $E\in \mathcal E^n$, let $d(E)$ denote its maximal
principal radius. Two facts are in order. First,  $T\in\mathscr{L}^n$ is
non-degenerated, if and only if the ellipsoid $TB$ is non-degenerated. Second, for $T\in\mathscr{L}^n$, since
\begin{align*}
\left\| T \right\|  = \mathop {\max }\limits_{u \in {S^{n - 1}}} |Tu| = \mathop {\max }\limits_{u \in {S^{n - 1}}} |{T^t}u| =\left\| {{T^t}} \right\|,
\end{align*}
it follows that
\begin{align*}
d(TB)= \mathop {\max }\limits_{u \in {S^{n - 1}}} {h_{TB}(u)}=\mathop {\max }\limits_{u \in {S^{n - 1}}} |{T^t}u| =\mathop {\max }\limits_{u \in {S^{n - 1}}} |Tu|
 = \mathop {\max }\limits_{u \in{S^{n - 1}}} {h_{{T^t}B}(u)}  = d({T^t}B).
\end{align*}

Let
\[d_n(T_1,T_2)=\|T_1-T_2\|,\quad {\rm for}\;  T_1, T_2\in \mathscr{L}^n.\]
Then the metric space $(\mathscr{L}^n, d_n)$ is complete. Since
$\mathscr{L}^n$ is of finite dimension, a set in $(\mathscr{L}^n, d_n)$ is compact,
if and only if it is bounded and closed.

We conclude this section with three lemmas, which will be used in
Sections 4 - 6. For their proofs, we refer to  Appendix A.
\begin{lemma}\label{lem 2.2}
Suppose $\{T_j\}_{j\in\mathbb N}\subset {\rm SL}(n)$. Then
\[\|T_j\|\to\infty\quad \Longleftrightarrow\quad \|T_j^{-1}\|\to\infty. \]
Thus,  $\{T_j\}_{j\in\mathbb N}$ is bounded from above, if
and only if $\{T_j^{-1}\}_{j\in\mathbb N}$ is bounded from above.
\end{lemma}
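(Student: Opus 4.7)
The plan is to leverage the volume constraint $\det T_j = 1$ through the singular values of $T_j$. Write $\sigma_1(T) \ge \sigma_2(T) \ge \cdots \ge \sigma_n(T) > 0$ for the singular values of $T \in {\rm SL}(n)$. From the discussion immediately preceding the lemma, the operator norm $\|T\| = \max_{u \in S^{n-1}}|Tu|$ coincides with the largest singular value $\sigma_1(T)$, and hence $\|T^{-1}\| = 1/\sigma_n(T)$.

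Since $T_j \in {\rm SL}(n)$ gives $\prod_{i=1}^n \sigma_i(T_j) = 1$, bounding each intermediate factor by the extreme ones yields
\[ 1 = \sigma_1 \cdots \sigma_n \le \sigma_1^{\,n-1}\,\sigma_n \qquad \text{and} \qquad 1 = \sigma_1 \cdots \sigma_n \ge \sigma_1\,\sigma_n^{\,n-1}. \]
After rearranging and translating back to $\|T_j\|$ and $\|T_j^{-1}\|$, both halves collapse into the single two-sided inequality
\[ \|T_j\|^{1/(n-1)} \;\le\; \|T_j^{-1}\| \;\le\; \|T_j\|^{\,n-1}. \]

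From this chain the rest is mechanical: divergence $\|T_j\|\to\infty$ forces $\|T_j^{-1}\|\to\infty$ via the lower estimate, and conversely via the upper estimate, establishing the stated equivalence. The same double inequality immediately delivers the boundedness dichotomy in the second sentence of the lemma, since $\sup_j\|T_j\|<\infty$ yields $\sup_j\|T_j^{-1}\|<\infty$ by the right-hand bound and vice versa by the left. I do not anticipate a genuine obstacle: the one substantive move is identifying $\|T\|$ and $\|T^{-1}\|$ with $\sigma_1(T)$ and $1/\sigma_n(T)$, after which everything reduces to the AM--GM-flavored consequence of $\prod_i\sigma_i=1$. An SVD-free variant is available -- one can write $\|T^{-1}\|^{-1} = \min_{u\in S^{n-1}}|Tu|$ and use a direct slicing argument against the determinant -- but the singular-value route gives the cleanest and shortest proof.
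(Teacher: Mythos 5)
Your proof is correct and is essentially the paper's argument: the paper (Lemma A.1 in Appendix A) writes $T = O_1 A O_2$ with $A$ diagonal and $O_1,O_2$ orthogonal, which is precisely the singular value decomposition, and then derives $\|T^{-1}\| \ge \|T\|^{1/(n-1)}$ from $(\min_i a_i)^{n-1}\max_i a_i \le \prod_i a_i = 1$. The only cosmetic difference is that the paper proves just the lower estimate and invokes symmetry for the converse, while you write out both sides of the two-sided inequality explicitly.
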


\begin{lemma}\label{lem 2.3}
Suppose $\{T_j\}_{j\in\mathbb N}\subset {\rm SL}(n)$, and $T_j\to T_0 \in {\rm SL}(n)$ with respect to  $d_n$. Then

\noindent (1) $T_j^t B\to T_0^tB$ with respect to $\delta_H$.

\noindent (2) $T_j^{-1}\to T_0^{-1}$ with respect to $d_n$.

\noindent (3) $T_jB\to T_0 B$ with respect to ${\tilde \delta}_H$.
\end{lemma}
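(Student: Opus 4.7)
Since each $T_j B$ and $T_j^t B$ is simultaneously a convex body and a star body about the origin, the Hausdorff metric $\delta_H$ reduces to uniform convergence of support functions on $S^{n-1}$, and the dual Hausdorff metric $\tilde\delta_H$ to uniform convergence of radial functions on $S^{n-1}$. The plan is to prove the three parts in the stated order, using (2) as the bridge for (3). For part (1), the identity $h_{T^t B}(u)=h_B(Tu)=|Tu|$ together with the reverse triangle inequality gives, for every $u\in S^{n-1}$, $\bigl|\,|T_j u|-|T_0 u|\,\bigr|\le |(T_j-T_0)u|\le \|T_j-T_0\|$, so $\delta_H(T_j^t B,T_0^t B)\le \|T_j-T_0\|\to 0$.

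For part (2), I would start from the algebraic identity $T_j^{-1}-T_0^{-1}=T_j^{-1}(T_0-T_j)T_0^{-1}$, which yields
\[\|T_j^{-1}-T_0^{-1}\|\le \|T_j^{-1}\|\,\|T_0-T_j\|\,\|T_0^{-1}\|.\]
Since $T_j\to T_0$ in $d_n$ the sequence $\{T_j\}$ is bounded, and Lemma 2.2 then supplies a uniform upper bound for $\{\|T_j^{-1}\|\}$. Combined with $\|T_j-T_0\|\to 0$, this gives the claim. This boundedness step is the main hinge of the whole lemma: without the control furnished by Lemma 2.2, the inverse operation need not be continuous along an arbitrary $d_n$-convergent sequence in ${\rm SL}(n)$.

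For part (3), from (2.1) and $\rho_B(x)=1/|x|$ one obtains $\rho_{TB}(u)=1/|T^{-1}u|$ on $S^{n-1}$, and hence
\[\bigl|\rho_{T_j B}(u)-\rho_{T_0 B}(u)\bigr|=\frac{\bigl|\,|T_0^{-1}u|-|T_j^{-1}u|\,\bigr|}{|T_j^{-1}u|\,|T_0^{-1}u|}\le \frac{\|T_j^{-1}-T_0^{-1}\|}{|T_j^{-1}u|\,|T_0^{-1}u|}.\]
Since $|T^{-1}u|\ge 1/\|T\|$ for $u\in S^{n-1}$ and $\{\|T_j\|\}$ is bounded, the denominator admits a positive uniform lower bound in $j$ and $u$. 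Combined with part (2) this forces the supremum over $u$ of the right-hand side to tend to $0$, giving $\tilde\delta_H(T_j B,T_0 B)\to 0$. Once the boundedness issue in (2) is handled, parts (1) and (3) are essentially uniform-continuity arguments on $S^{n-1}$.
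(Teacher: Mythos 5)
Your proof is correct. Parts (1) and (3) follow essentially the same route as the paper: for (1), both arguments rest on $h_{T^t B}(u)=|Tu|$ and the bound $\bigl||T_ju|-|T_0u|\bigr|\le\|T_j-T_0\|$; for (3), both pass to radial functions via $\rho_{TB}(u)=1/|T^{-1}u|$ and use boundedness of $\{\|T_j\|\}$ (hence a uniform lower bound on $|T_j^{-1}u|$) to turn convergence of $|T_j^{-1}u|$ into uniform convergence of the reciprocals, though you spell out the estimate more explicitly than the paper does.

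Part (2) is where your argument genuinely diverges from the paper, and your version is the more direct one. The paper argues by compactness: it invokes Lemma 2.2 (Lemma A.1) to bound $\{\|T_j^{-1}\|\}$, then observes that it suffices to show every convergent subsequence $T_{j_k}^{-1}\to T$ satisfies $T=T_0^{-1}$, and extracts this from $T_{j_k}T_{j_k}^{-1}=I_n$ together with a three-term norm estimate showing $T_{j_k}T_{j_k}^{-1}\to T_0 T$. You instead use the resolvent-type identity $T_j^{-1}-T_0^{-1}=T_j^{-1}(T_0-T_j)T_0^{-1}$, which gives $\|T_j^{-1}-T_0^{-1}\|\le\|T_j^{-1}\|\,\|T_0-T_j\|\,\|T_0^{-1}\|$ and finishes in one line once Lemma 2.2 supplies the uniform bound on $\|T_j^{-1}\|$. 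Both proofs lean on Lemma 2.2 at exactly the same point; your identity avoids the subsequence extraction and produces an explicit quantitative rate, which is a small but real gain in clarity. (One could even dispense with Lemma 2.2 here by noting $T_j^{-1}=\mathrm{adj}(T_j)$ for $T_j\in{\rm SL}(n)$, a continuous function of the entries, but invoking Lemma 2.2 as you and the paper both do keeps the logic uniform across the section.)
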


\begin{lemma}\label{lem 2.4}
Suppose $E_0\in\mathcal{E}^n$,  $\{E_j\}_{j\in\mathbb N}\subset \mathcal{E}^n$ and $V(E_j)=a$, $\forall j \in\mathbb N$,  $a>0$. Then $E_j\to E_0$ with respect to
$\delta_H$, if and only if $E_j\to E_0$ with respect to ${\tilde\delta}_H$.
\end{lemma}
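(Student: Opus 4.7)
The plan is to parametrize each origin-symmetric ellipsoid by a positive definite symmetric operator with fixed determinant, reduce to $\mathrm{SL}(n)$ by a single rescaling, and then translate both convergences into $d_n$-convergence of these operators, at which point Lemma 2.3 closes both implications. Setting $c=(a/V(B))^{1/n}$, I will write $E_j=cT_jB$ and $E_0=cT_0B$ with $T_j,T_0\in\mathrm{SL}(n)$ positive definite and symmetric (such a decomposition is unique for origin-symmetric ellipsoids). From $h_B(x)=|x|$, $\rho_B(x)=1/|x|$, and (2.1), one obtains $h_{E_j}(u)=c|T_j u|$ and $\rho_{E_j}(u)=c/|T_j^{-1}u|$ on $S^{n-1}$.

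For the implication $\delta_H\Rightarrow\tilde\delta_H$, uniform convergence $h_{E_j}\to h_{E_0}$ first forces $\{\|T_j\|\}$ to be bounded, and Lemma 2.2 then bounds $\{\|T_j^{-1}\|\}$ as well, so $\{T_j\}$ lies in a compact subset of $\mathrm{SL}(n)$. I would then show that every $d_n$-convergent subsequence $T_{j_k}\to T^*$ must satisfy $T^*=T_0$: the limit is automatically positive definite symmetric with determinant $1$, and $|T^*u|=|T_0 u|$ on $S^{n-1}$; squaring and invoking the polarization identity yields $(T^*)^2=T_0^2$, and uniqueness of the positive definite square root forces $T^*=T_0$. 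Hence $T_j\to T_0$ in $d_n$, and Lemma 2.3(3) delivers $E_j=cT_jB\to cT_0B=E_0$ in $\tilde\delta_H$.

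For the converse $\tilde\delta_H\Rightarrow\delta_H$, I would run the analogous argument on the inverses. Since $\rho_{E_0}$ is continuous and strictly positive on $S^{n-1}$, uniform convergence $\rho_{E_j}\to\rho_{E_0}$ upgrades to uniform convergence $|T_j^{-1}u|\to|T_0^{-1}u|$ on $S^{n-1}$. Replaying the compactness-plus-polarization argument of the previous paragraph with $T_j^{-1}$ in place of $T_j$ gives $T_j^{-1}\to T_0^{-1}$ in $d_n$, and Lemma 2.3(2) then yields $T_j\to T_0$. Since each $T_j$ is symmetric, Lemma 2.3(1) produces $E_j=cT_jB=cT_j^tB\to cT_0^tB=E_0$ with respect to $\delta_H$.

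The main obstacle is the middle step in each direction: upgrading uniform convergence of the scalar field $u\mapsto|T_ju|$ (or $|T_j^{-1}u|$) on $S^{n-1}$ to $d_n$-convergence of the operator $T_j$ (or $T_j^{-1}$). This is where the combination of the compactness input from Lemma 2.2 and the rigidity of the positive definite symmetric representation under polarization becomes essential, and it is the only place the fixed-volume hypothesis $V(E_j)=a$ is used—without it, the normalizing constant $c$ would vary with $j$ and $\{T_j\}$ could escape every compact subset of $\mathrm{SL}(n)$.
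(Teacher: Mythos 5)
Your proof is correct and follows essentially the same strategy as the paper's (Lemma A.3 in Appendix A): parametrize the ellipsoids by symmetric positive definite matrices in $\mathrm{SL}(n)$ after a uniform rescaling, use Lemma 2.2 (A.1) for compactness of $\{T_j\}$ (resp. $\{T_j^{-1}\}$), identify the subsequential limit as $T_0$ (resp. $T_0^{-1}$), and invoke Lemma 2.3 (A.2) to pass back to the ellipsoids, with the converse running through the polars. The only cosmetic difference is the subsequential-limit identification: you use polarization plus uniqueness of the positive definite square root, while the paper re-invokes Lemma A.2(1) and the uniqueness of the symmetric positive definite matrix representing a given origin-symmetric ellipsoid.
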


\vskip 25pt
\section{\bf Dual Orlicz mixed volumes}
\vskip 10pt
In order to define Orlicz-Legendre ellipsoids,  we  make some necessary preparations.

\begin{definition}\label{def 3.1}
Suppose $K,L\in\mathcal{S}^n_o$ and $\varphi\in\Phi$. The geometric quantity
\[{\tilde V}_{\varphi}(K,L) :=  \int_{{S^{n - 1}}} {\varphi \left( {\frac{{{\rho _K}}}{{{\rho _L}}}} \right)d{\tilde V}_K} \]
is called the \emph{dual Orlicz mixed volume} of $K$ and $L$ with respect to $\varphi$. The quantity
\[{\bar{\tilde V}}_{\varphi}(K,L) := {\varphi ^{ - 1}}\left( {\frac{{{{\tilde V}_\varphi }(K,L)}}{{V(K)}}} \right) = {\varphi ^{ - 1}}\left( {\int_{{S^{n - 1}}} {\varphi \left( {\frac{{{\rho _K}}}{{{\rho _L}}}} \right)d V_K^*} } \right)\]
is called the \emph{normalized dual Orlicz mixed volume} of $K$ and $L$ with respect to $\varphi$.
\end{definition}

Obviously, $\tilde V_{\varphi}(K,K)=\varphi(1)V(K)$, and ${\bar{\tilde V}}(K,K)=1$.

If $\varphi(t)=t^p$, $1\le p<\infty$, then ${\tilde V}_{\varphi}(K,L)$ reduces to the classical dual mixed volume
\[ {\tilde V}_{-p}(K,L)=\int_{S^{n-1}}{\left(\frac{\rho_K}{\rho_L} \right)^pd{\tilde V}_K},\]
and ${\bar{\tilde V}}_{\varphi}(K,L)$ reduces to normalized dual mixed volume \cite{Yu}
\[{ \bar{\tilde V}}_{-p}(K,L): = {\left[ {\frac{{{{\tilde V}_p}(K,L)}}{{V(K)}}} \right]^{\frac{1}{p}}} = {\left( {\int_{{S^{n - 1}}} {{{\left( {\frac{{{\rho _K}}}{{{\rho _L}}}} \right)}^p}dV_K^*} } \right)^{\frac{1}{p}}}.\]

\begin{lemma}\label{lem 3.2}
Suppose $K, L\in\mathcal{S}^n_o$ and $\varphi\in\Phi$. Then

\noindent (1) $\tilde V_\varphi(TK,L)= |T| \tilde V_\varphi(K,T^{-1}L)$,  for  $ T \in{\rm GL}(n)$.

\noindent (2) ${\bar{\tilde V}}_\varphi(TK,L)= {\bar{\tilde V}}_\varphi(K,T^{-1}L)$,  for  $T\in{\rm GL}(n)$.

\noindent (3) ${\bar{\tilde V}}_\varphi(\lambda K,L)={\bar{\tilde V}}_\varphi(K,\lambda^{-1}L)$, for  $\lambda>0$.
\end{lemma}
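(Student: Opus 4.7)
The plan is to prove (1) by a full-dimensional change of variables, from which (2) will follow by normalization and (3) by specializing to $T=\lambda I_n$.

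First I would recast the spherical integral defining $\tilde V_\varphi(K,L)$ as an integral over the body $K$ itself. Writing $x=ru$ in polar coordinates with $r\in(0,\rho_K(u))$, and noting that $\rho_K$ and $\rho_L$ are both $(-1)$-homogeneous, the ratio $\rho_K(x)/\rho_L(x)$ is $0$-homogeneous in $x$, while the radial integration $\int_0^{\rho_K(u)}r^{n-1}\,dr=\rho_K(u)^n/n$ exactly reproduces the dual conical weight $d\tilde V_K$. Thus
\[
\tilde V_\varphi(K,L)\;=\;\int_K\varphi\!\left(\frac{\rho_K(x)}{\rho_L(x)}\right)dx.
\]

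Next I would apply the linear substitution $x=Ty$ with $T\in{\rm GL}(n)$. From (2.1) one has $\rho_{TK}(x)=\rho_K(T^{-1}x)$, and, applied to $L$ and $T^{-1}$, $\rho_L(Ty)=\rho_{T^{-1}L}(y)$. Using $dx=|T|\,dy$ together with the fact that $y\in K\iff Ty\in TK$ gives
\[
\tilde V_\varphi(TK,L)=\int_{TK}\varphi\!\left(\frac{\rho_{TK}(x)}{\rho_L(x)}\right)dx=|T|\int_K\varphi\!\left(\frac{\rho_K(y)}{\rho_{T^{-1}L}(y)}\right)dy=|T|\,\tilde V_\varphi(K,T^{-1}L),
\]
which is (1). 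For (2), I would divide both sides by $V(TK)=|T|V(K)$ and apply $\varphi^{-1}$; the two factors of $|T|$ cancel, yielding ${\bar{\tilde V}}_\varphi(TK,L)={\bar{\tilde V}}_\varphi(K,T^{-1}L)$. Statement (3) is just (2) with $T=\lambda I_n$.

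The main (mild) obstacle is keeping the $T$-versus-$T^{-1}$ bookkeeping for radial functions straight, since it is easy to slip and write $\rho_{TL}$ in place of $\rho_{T^{-1}L}$. The identity $\rho_L(Ty)=\rho_{T^{-1}L}(y)$ is the pivot that makes the $T^{-1}L$ on the right-hand side of (1) appear automatically, and once it is in place the remainder of the computation is a routine change of variables.
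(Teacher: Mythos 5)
Your proof is correct, and it takes a genuinely different route from the paper's. The paper stays on the sphere: starting from the definition $\tilde V_\varphi(TK,L)=\int_{S^{n-1}}\varphi(\rho_{TK}/\rho_L)\,d\tilde V_{TK}$, it invokes the transformation law \eqref{(2.2)} for the dual conical measure (together with \eqref{(2.1)} for radial functions) to push $T$ through the integrand and the measure, picking up the Jacobian $|T|$ as a factor when $T\in\mathrm{GL}(n)$. Your argument instead observes that $\varphi(\rho_K/\rho_L)$ is $0$-homogeneous and that integrating the radial variable over $(0,\rho_K(u))$ exactly reproduces $\rho_K^n/n\,dS$, so $\tilde V_\varphi(K,L)=\int_K\varphi(\rho_K(x)/\rho_L(x))\,dx$; then the whole identity reduces to an elementary substitution $x=Ty$ in a Lebesgue integral. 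The benefit of your route is that it sidesteps \eqref{(2.2)} entirely (whose statement in the paper is phrased only for $T\in\mathrm{SL}(n)$, requiring the reader to supply the $|T|$-factor extension) and makes the Jacobian's appearance transparent; the paper's route has the advantage of never leaving the spherical picture, which fits the way the rest of the dual Brunn-Minkowski machinery in the paper is set up. Both derivations of (2) and (3) from (1) are the same.
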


\begin{proof}
Suppose $T\in {\rm GL}(n)$. For $u\in S^{n-1}$, let $\langle T^{-1}\rangle=T^{-1}u/|T^{-1}u|$.
From Definition \ref{def 3.1}, (\ref{(2.1)}) and (\ref{(2.2)}), it follows that
\begin{align*}
{{\tilde V}_\varphi }(TK,L) &= \int_{{S^{n - 1}}} {\varphi \left( {\frac{{{\rho _{TK}}(u)}}{{{\rho _L}(u)}}} \right)d{{\tilde V}_{TK}}(u)}  \\
&= |T|\int_{{S^{n - 1}}} {\varphi \left( {\frac{{{\rho _K}\left( {\left\langle {{T^{ - 1}}u} \right\rangle } \right)}}{{{\rho _{{T^{ - 1}}L}}\left( {\left\langle {{T^{ - 1}}u} \right\rangle } \right)}}} \right)d{{\tilde V}_K}\left( {\left\langle {{T^{ - 1}}u} \right\rangle } \right)}  \\
&= |T|\int_{{S^{n - 1}}} {\varphi \left( {\frac{{{\rho _K}}}{{{\rho _{{T^{ - 1}}L}}}}} \right)d{{\tilde V}_K}}  \\
&= |T|{{\tilde V}_\varphi }(K,{T^{ - 1}}L),
\end{align*}
as desired.

From (1) and Definition \ref{def 3.1}, we have
\[{{\bar{\tilde V}}_\varphi }(TK,L) = {\varphi ^{ - 1}}\left( {\frac{{{{\tilde V}_\varphi }(TK,L)}}{{V(TK)}}} \right) = {\varphi
^{ - 1}}\left( {\frac{{{{\tilde V}_\varphi }(K,{T^{ - 1}}L)}}{{V(K)}}} \right) = {\bar{\tilde V}_\varphi }(K,{T^{ - 1}}L),\]
as desired.

Take $T=\lambda I_n$ in (2), it yields  (3) directly.
\end{proof}

Along with the functional $\tilde{V}_\varphi(K,L)$, we introduce
\begin{definition}\label{def 3.3}
Suppose $K, L\in\mathcal{S}^n_o$ and $\varphi\in\Phi$, define
\[{O_\varphi }(K,L) = {\left\| {\frac{{{\rho _K}}}{{{\rho _L}}}:{{\tilde V}_K}} \right\|_\varphi }= \inf \left\{ {\lambda  > 0:{\varphi ^{ - 1}}\left( {\int_{{S^{n - 1}}} {\varphi \left( {\frac{{{\rho _K}}}{{\lambda {\rho _L}}}} \right)dV_K^*} } \right) \le 1} \right\}.\]
\end{definition}

Obviously, $O_\varphi(K,K)=1$. If $\varphi(t)=t^p$, $1\le p<\infty$,
then $O_\varphi(K,L)={\bar{\tilde V}}_{-p}(K,L)$.

From Definition \ref{def 3.3} and Definition \ref{def 3.1},   we have
\begin{align*}
{O_\varphi }(K,L)
&= \inf \left\{ {\lambda  > 0:\frac{{{{\tilde V}_\varphi }( K, \lambda L)}}{{V(K)}} \le \varphi (1)} \right\} \\
&= \inf \left\{ {\lambda  > 0:{{\bar{\tilde V}}_\varphi }(K, \lambda L) \le 1} \right\}.
\end{align*}
Combining this with Lemma \ref{lem 3.2}, we immediately obtain
\begin{lemma}\label{lem 3.4}
Suppose $K,L\in\mathcal{S}^n_o$ and $\varphi\in\Phi$. Then

\noindent (1) $O_\varphi(TK,L)=O_\varphi(K,T^{-1}L)$, for all $T\in {\rm GL}(n)$.

\noindent (2) $O_\varphi(\lambda K, L)= O_\varphi(K,\lambda^{-1}L)=\lambda O_\varphi(K,L)$, for all $\lambda>0$.
\end{lemma}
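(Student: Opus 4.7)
The plan is to derive both identities directly from the equivalent reformulation
\[O_\varphi(K,L)=\inf\left\{\lambda>0 : \bar{\tilde V}_\varphi(K,\lambda L)\le 1\right\}\]
already noted between Definition \ref{def 3.3} and Lemma \ref{lem 3.4}, combined with parts (2) and (3) of Lemma \ref{lem 3.2}. Since all the hard analytic content (well-definedness of the infimum, strict monotonicity of $\lambda\mapsto\bar{\tilde V}_\varphi(K,\lambda L)$, etc.) has already been embedded in Lemma \ref{lem 2.1} and Lemma \ref{lem 3.2}, this lemma should be a short change-of-variables argument rather than a computation at the level of integrals.

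For part (1), I would start from
\[O_\varphi(TK,L)=\inf\{\lambda>0:\bar{\tilde V}_\varphi(TK,\lambda L)\le 1\},\]
apply Lemma \ref{lem 3.2}(2) to rewrite $\bar{\tilde V}_\varphi(TK,\lambda L)=\bar{\tilde V}_\varphi(K,T^{-1}(\lambda L))$, and then use linearity $T^{-1}(\lambda L)=\lambda\,T^{-1}L$ (which is just the definition of the dilate of an image set under a linear map). This turns the constraint into $\bar{\tilde V}_\varphi(K,\lambda\,T^{-1}L)\le 1$, so the infimum coincides with $O_\varphi(K,T^{-1}L)$.

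For part (2), the first equality is an immediate instance of (1) applied to $T=\lambda I_n$, using $T^{-1}=\lambda^{-1}I_n$. For the positive homogeneity $O_\varphi(K,\lambda^{-1}L)=\lambda\,O_\varphi(K,L)$, I would perform the substitution $\mu=\lambda^{-1}\nu$ in
\[O_\varphi(K,\lambda^{-1}L)=\inf\{\mu>0:\bar{\tilde V}_\varphi(K,\mu\lambda^{-1}L)\le 1\},\]
which converts the constraint to $\bar{\tilde V}_\varphi(K,\nu L)\le 1$ and factors $\lambda$ out of the infimum.

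There is no serious obstacle here; the only point to be mindful of is that the infimum defining $O_\varphi$ must actually be attained (or at least that $\bar{\tilde V}_\varphi(K,\lambda L)$ is continuous and strictly decreasing in $\lambda$), so that rescaling $\lambda$ inside the constraint legitimately rescales the infimum. This is exactly the content of Lemma \ref{lem 2.1} applied to $\mu=V_K^*$ and $f=\rho_K/\rho_L$, so the argument goes through cleanly.
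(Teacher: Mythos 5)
Your approach is the same as the paper's: the paper observes that $O_\varphi(K,L)=\inf\{\lambda>0:\bar{\tilde V}_\varphi(K,\lambda L)\le 1\}$ and then states that Lemma~\ref{lem 3.2} immediately gives the result, exactly as you do. One small slip in your write-up: for the homogeneity step the substitution should be $\nu=\lambda^{-1}\mu$ (equivalently $\mu=\lambda\nu$), not $\mu=\lambda^{-1}\nu$; as written the constraint becomes $\bar{\tilde V}_\varphi(K,\lambda^{-2}\nu L)\le 1$ and the infimum would pick up a factor $\lambda^{-1}$ rather than $\lambda$, but reversing the substitution fixes both issues at once.
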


The next lemma provides a simple but powerful identity.

\begin{lemma}\label{lem 3.5}
Suppose $K,L\in\mathcal{S}^n_o$ and $\varphi\in\Phi$. Then
\[{\bar{\tilde V}}_\varphi (K,{O_\varphi }(K,L)L) = 1.\]
Consequently, there is the following equivalence
\[{\bar{\tilde V}}_\varphi (K, L) = 1\quad \Longleftrightarrow\quad {O_\varphi }(K,L)=1. \]
\end{lemma}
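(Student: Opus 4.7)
The plan is to read off the identity directly from the characterization of the Orlicz norm given just after Lemma~\ref{lem 2.1}, once we rewrite $\bar{\tilde V}_\varphi(K,\lambda L)$ in terms of that norm. Note that $\rho_K/\rho_L$ is continuous and strictly positive on $S^{n-1}$, and $|\tilde V_K|=V(K)>0$, so Lemma~\ref{lem 2.1} applies with $f=\rho_K/\rho_L$ and $\mu=\tilde V_K$; equivalently we may work with the probability measure $V_K^*=\tilde V_K/V(K)$.

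First I would observe that, using $\rho_{\lambda L}=\lambda\rho_L$, the definition of the normalized dual Orlicz mixed volume gives, for every $\lambda>0$,
\[
{\bar{\tilde V}}_\varphi(K,\lambda L)=\varphi^{-1}\!\left(\int_{S^{n-1}}\varphi\!\left(\frac{\rho_K}{\lambda\rho_L}\right)dV_K^*\right).
\]
Hence ${\bar{\tilde V}}_\varphi(K,\lambda L)=1$ if and only if $\int_{S^{n-1}}\varphi(\rho_K/(\lambda\rho_L))\,dV_K^*=\varphi(1)$. On the other hand, by Definition~\ref{def 3.3} the quantity $O_\varphi(K,L)$ is precisely the Orlicz norm $\|\rho_K/\rho_L:\tilde V_K\|_\varphi$.

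Now I would invoke the equivalence displayed right after Lemma~\ref{lem 2.1}: with $f=\rho_K/\rho_L$ and $\mu=\tilde V_K$, one has $\|f\|_\varphi=\lambda_0$ exactly when $\frac{1}{|\mu|}\int \varphi(f/\lambda_0)\,d\mu=\varphi(1)$. Taking $\lambda_0=O_\varphi(K,L)$ therefore yields
\[
\int_{S^{n-1}}\varphi\!\left(\frac{\rho_K}{O_\varphi(K,L)\,\rho_L}\right)dV_K^*=\varphi(1),
\]
which, by the reformulation in the first step, is exactly ${\bar{\tilde V}}_\varphi(K,O_\varphi(K,L)L)=1$.

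For the consequence, the $(\Leftarrow)$ direction is obtained by specializing the identity just proved to the case $O_\varphi(K,L)=1$. For $(\Rightarrow)$, assume ${\bar{\tilde V}}_\varphi(K,L)=1$; then $\int \varphi(\rho_K/\rho_L)\,dV_K^*=\varphi(1)$, so by the uniqueness part of Lemma~\ref{lem 2.1} (i.e., the strict monotonicity of $\psi(\lambda)=\int\varphi(f/\lambda)\,d\mu$) the value $\lambda=1$ is the unique $\lambda_0$ satisfying that equation, and hence equals $O_\varphi(K,L)$. There is no real obstacle here: the only subtlety is to remember that the characterization after Lemma~\ref{lem 2.1} supplies both existence of a $\lambda_0$ realizing the norm and uniqueness of such a $\lambda_0$, and both are used — existence for the identity itself, uniqueness for the forward direction of the equivalence.
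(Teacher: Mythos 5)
Your proof is correct and follows essentially the same route as the paper: both reduce the identity to the characterization of the Orlicz norm recorded immediately after Lemma~\ref{lem 2.1}, applied to $f=\rho_K/\rho_L$ and $\mu=\tilde V_K$, and both derive the equivalence from the same lemma. Your writeup is a bit more explicit about separating existence and uniqueness of the norming $\lambda_0$, but the underlying argument is the paper's.
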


\begin{proof}
From Definition \ref{def 3.1}, Definition \ref{def 3.3}, together
with Lemma \ref{lem 2.1}, it follows that
\[\varphi \left( {{{\bar{\tilde V}}_\varphi }(K,{O_\varphi }(K,L)L)} \right) = \int_{{S^{n - 1}}} {\varphi \left( {\frac{{{\rho _K}}}{{{O_\varphi }(K,L){\rho _K}}}} \right)dV_K^*}  = \varphi (1).\]
Thus, ${{\bar{\tilde V}}_\varphi }(K,{O_\varphi }(K,L)L) = 1$.  By
Lemma \ref{lem 2.1} again,  the desired equivalence follows.
\end{proof}

What follows establishes the dual Orlicz Minkowski inequalities.
\begin{lemma}\label{lem 3.6}
Suppose $K, L\in\mathcal{S}^n_o$ and $\varphi\in\Phi$. Then
\begin{equation}\label{ineq 3.6.1}
{{\bar{\tilde V}}_\varphi }(K,L) \ge {\left( {\frac{{V(K)}}{{V(L)}}} \right)^{\frac{1}{n}}},
\end{equation}
and
\begin{equation}\label{ineq 3.6.2}
{O_\varphi }(K,L) \ge {\left( {\frac{{V(K)}}{{V(L)}}} \right)^{\frac{1}{n}}}.
\end{equation}
Each equality holds in the above inequalities if and only if $K$ and $L$ are dilates.
\end{lemma}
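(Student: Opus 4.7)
My plan is to prove inequality \eqref{ineq 3.6.1} by a three-fold application of Jensen's inequality against the probability measure $V_K^*$, and then derive \eqref{ineq 3.6.2} as a direct consequence of \eqref{ineq 3.6.1} together with the identity provided by Lemma \ref{lem 3.5}.

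The starting point is the defining identity $dV_K^* = \rho_K^n/(nV(K))\,dS$, which makes $V_K^*$ a Borel probability measure on $S^{n-1}$ and yields the key computation
\[
\int_{S^{n-1}} \left(\frac{\rho_L}{\rho_K}\right)^n dV_K^* \;=\; \frac{1}{nV(K)} \int_{S^{n-1}} \rho_L^n\, dS \;=\; \frac{V(L)}{V(K)}.
\]
Setting $g = \rho_K/\rho_L$, the proof of \eqref{ineq 3.6.1} reduces to the chain
\[
\int_{S^{n-1}} \varphi(g)\, dV_K^* \;\ge\; \varphi\!\left(\int g\, dV_K^*\right) \;\ge\; \varphi\!\left(\frac{1}{\int g^{-1}\, dV_K^*}\right) \;\ge\; \varphi\!\left(\bigl(V(K)/V(L)\bigr)^{1/n}\right).
\]
The first inequality is Jensen applied to the convex function $\varphi$; the second is Jensen applied to the convex function $t\mapsto 1/t$ on $(0,\infty)$, combined with monotonicity of $\varphi$; and the third uses Jensen for the convex function $t\mapsto t^n$ on $[0,\infty)$ together with the identity above, once more combined with monotonicity of $\varphi$. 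Applying the strictly increasing bijection $\varphi^{-1}$ then yields \eqref{ineq 3.6.1}.

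For \eqref{ineq 3.6.2}, I would invoke Lemma \ref{lem 3.5}, which says $\bar{\tilde V}_\varphi(K, O_\varphi(K,L)L) = 1$. Applying \eqref{ineq 3.6.1} to the pair $(K, O_\varphi(K,L)L)$ and using $V(\lambda L) = \lambda^n V(L)$ gives
\[
1 \;=\; \bar{\tilde V}_\varphi\bigl(K, O_\varphi(K,L) L\bigr) \;\ge\; \left( \frac{V(K)}{O_\varphi(K,L)^n\, V(L)} \right)^{1/n},
\]
whence $O_\varphi(K,L) \ge (V(K)/V(L))^{1/n}$, as required.

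The equality case will be the main point requiring a little care, since $\varphi$ is only assumed convex (not strictly so). The observation is that strict convexity of $t\mapsto 1/t$ and of $t\mapsto t^n$ on $(0,\infty)$ (using $n\ge 2$) forces equality in the second and third steps to hold only when $\rho_L/\rho_K$ is constant $V_K^*$-a.e.\ on $S^{n-1}$; continuity of the radial functions then promotes this to equality on all of $S^{n-1}$, giving $K = cL$ for some $c>0$. The converse is an immediate computation from Definition \ref{def 3.1} and Definition \ref{def 3.3}. I expect this equality analysis to be the trickiest part, since one must avoid relying on any strict convexity of $\varphi$ itself.
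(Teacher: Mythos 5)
Your proof is correct, and both inequalities as well as the equality analysis go through. The derivation of \eqref{ineq 3.6.2} from \eqref{ineq 3.6.1} via Lemma \ref{lem 3.5} is identical to the paper's. For \eqref{ineq 3.6.1}, however, you take a mildly different route: after the common first step (Jensen on $\varphi$), the paper estimates $\int_{S^{n-1}} (\rho_K/\rho_L)\,dV_K^*$ in a single stroke by the \emph{reverse H\"older inequality} on $S^{n-1}$ with exponents $p = n/(n+1)$ and $q = -n$, arriving directly at $(V(K)/V(L))^{1/n}$. You instead split this step into two further Jensen applications: first $t\mapsto 1/t$ (giving the arithmetic-harmonic mean inequality, $\int g \ge 1/\int g^{-1}$), then $t\mapsto t^n$ (giving $\int g^{-1} \le (\int g^{-n})^{1/n}$), combined with the exact computation $\int (\rho_L/\rho_K)^n\,dV_K^* = V(L)/V(K)$. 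In effect, you are proving $M_1(g) \ge M_{-1}(g) \ge M_{-n}(g)$ where the paper jumps straight to $M_1 \ge M_{-n}$ via reverse H\"older; the two are of course closely related (reverse H\"older for these exponents is derivable from Jensen), so the difference is one of packaging, not substance. Your version is slightly more elementary (Jensen only, no H\"older-type citation) and makes the equality analysis transparent: strict convexity of $t\mapsto 1/t$ on $(0,\infty)$ alone forces $\rho_L/\rho_K$ to be constant, so you do not even need $n\ge 2$ or strict convexity of $t^n$, and you correctly observe that no strict convexity of $\varphi$ is needed. The paper's argument is more compact but relies on knowing the equality case of reverse H\"older. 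Both are fine.
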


\begin{proof}
From Definition \ref{def 3.1},  the fact that $\varphi^{-1}$ is strictly
increasing in $(0,\infty)$ together with the convexity of $\varphi$
and Jensen's inequality, the definition of $V_K^*$, and the reverse H$\rm{\ddot{o}}$lder inequality, we have
\begin{align*}
{{\bar{\tilde V}}_\varphi }(K,L) &= {\varphi ^{ - 1}}\left( {\int_{{S^{n - 1}}} {\varphi \left( {\frac{{{\rho _K}}}{{{\rho _L}}}} \right)dV_K^*} } \right) \\
&\ge {\varphi ^{ - 1}}\left( {\varphi \left( {\int_{{S^{n - 1}}} {\frac{{{\rho _K}}}{{{\rho _L}}}dV_K^*} } \right)} \right)\\
&= \frac{1}{{nV(K)}}\int_{{S^{n - 1}}} {\frac{{\rho _K^{n + 1}}}{{{\rho _L}}}dS}  \\
&\ge \frac{1}{{V(K)}}{\left( {\frac{1}{n}\int_{{S^{n - 1}}} {\rho _K^{(n + 1) \cdot \frac{n}{{n + 1}}}dS} } \right)^{\frac{{n + 1}}{n}}}{\left( {\frac{1}{n}\int_{{S^{n - 1}}} {\rho _L^{ - 1 \cdot ( - n)}dS} } \right)^{ - \frac{1}{n}}} \\ &= {\left( {\frac{{V(K)}}{{V(L)}}} \right)^{\frac{1}{n}}}.
\end{align*}

By the equality condition of the reverse H$\rm{\ddot{o}}$lder inequality, we know that
the equality in the forth line occurs only if $\rho_K/\rho_L$ is a positive constant on $S^{n-1}$.
Thus,  the equality holds in (\ref{ineq 3.6.1}) only if $K$ and $L$ are dilates.
Conversely, if $K=sL$ for some $s>0$, then
${{\bar{\tilde V}}_\varphi }(K,L)=s=\left(V(K)/V(L))\right)^{1/n}$.

Combining  Lemma \ref{lem 3.5} with inequality (\ref{ineq 3.6.1}), we have
\[1 = {\bar{\tilde V}}_\varphi  (K,{O_\varphi }(K,L)L) \ge {\left( {\frac{{V(K)}}{{V({O_\varphi }(K,L)L)}}} \right)^{\frac{1}{n}}}=\frac{1}{{O_\varphi }(K,L)}{\left( {\frac{{V(K)}}{{V(L)}}} \right)^{\frac{1}{n}}},\]
where the  equality holds if and only if $K$ and $O_\varphi(K,L)L$ are dilates.
Thus, inequality (\ref{ineq 3.6.2}), as well as its equality condition,
is derived.
\end{proof}

The next lemma is crucial to prove the continuity of the functionals ${\tilde V}_\varphi(K,L)$, ${\bar{\tilde V}}_\varphi(K,L)$
and $O_\varphi(K,L)$ in $(K,L,\varphi)$.
\begin{lemma}\label{lem 3.7}
Suppose $f_i, f$ are strictly positive and continuous functions on
$S^{n-1}$; $\varphi_k, \varphi\in\Phi$; $\mu_l,\mu$ are Borel
probability measures on $S^{n-1}$;  $i,k,l\in\mathbb N$. If $f_i\to f$ pointwise, $\varphi_k\to\varphi$,
and $\mu_l\to\mu$ weakly, then
\begin{equation}\label{crucial lim1}
\int_{{S^{n - 1}}} {{\varphi _k}\left( {{f_i}} \right)d{\mu _l}}  \to \int_{{S^{n - 1}}} {\varphi \left( f \right)d\mu },
\end{equation}

\begin{equation}\label{crucial lim2}
\varphi _k^{ - 1}\left( {\int_{{S^{n - 1}}} {{\varphi _k}\left( {{f_i}} \right)d{\mu _l}} } \right) \to {\varphi ^{ - 1}}\left( {\int_{{S^{n - 1}}}
{\varphi \left( f \right)d\mu }} \right),
\end{equation}
and

\begin{equation}\label{crucial lim3}
{\left\| {{f_i}:{\mu _l}} \right\|_{{\varphi _k}}} \to {\left\|
{f:\mu } \right\|_\varphi }.
\end{equation}
\end{lemma}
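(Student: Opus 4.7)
The strategy is to prove (3.5) first, and then derive (3.6) and (3.7) from it by combining the local uniform convergence $\varphi_k\to\varphi$ in $\Phi$ with the (uniform) convergence $f_i\to f$ on the compact sphere. Since $f$ is continuous and strictly positive on $S^{n-1}$, there exist $0<a<b<\infty$ with $a\le f\le b$; the convergence of $f_i$ to $f$ (which in the applications of this paper comes from the dual Hausdorff metric and is therefore uniform) then gives $a/2\le f_i\le 2b$ for all large $i$, so every $f_i$ and $f$ take values in a common compact interval $I\subset(0,\infty)$. Combining $|\varphi_k-\varphi|_I\to 0$ with the uniform continuity of $\varphi$ on $I$ one gets
\[\|\varphi_k(f_i)-\varphi(f)\|_\infty\le|\varphi_k-\varphi|_I+\omega_\varphi(\|f_i-f\|_\infty)\longrightarrow 0,\]
so $\varphi_k(f_i)\to\varphi(f)$ uniformly on $S^{n-1}$.

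For (3.5), I would decompose
\[\Bigl|\!\int\varphi_k(f_i)\,d\mu_l-\!\int\varphi(f)\,d\mu\Bigr|\le\Bigl|\!\int[\varphi_k(f_i)-\varphi(f)]\,d\mu_l\Bigr|+\Bigl|\!\int\varphi(f)\,d\mu_l-\!\int\varphi(f)\,d\mu\Bigr|.\]
The first term is bounded above by $\|\varphi_k(f_i)-\varphi(f)\|_\infty$ since $\mu_l$ is a probability measure, hence vanishes by the preceding paragraph. The second vanishes by the weak convergence $\mu_l\to\mu$ applied to the bounded continuous function $\varphi(f)$.

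For (3.6), I first establish the auxiliary fact that $\varphi_k^{-1}\to\varphi^{-1}$ locally uniformly on $[0,\infty)$. If $y_k\to y>0$ and $x_k:=\varphi_k^{-1}(y_k)$, then $\{x_k\}$ stays in a compact subinterval of $(0,\infty)$ (by $0/\infty$ arguments analogous to the one used in Paragraph~4 below), so any cluster point $x_*$ satisfies $\varphi(x_*)=\lim\varphi_k(x_k)=\lim y_k=y$ by local uniform convergence, forcing $x_*=\varphi^{-1}(y)$ by strict monotonicity. Applying this to $y_{k,i,l}:=\int\varphi_k(f_i)\,d\mu_l$, which by (3.5) converges to $\int\varphi(f)\,d\mu>0$, yields (3.6).

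The main obstacle is (3.7). Set $\lambda_{k,i,l}:=\|f_i:\mu_l\|_{\varphi_k}$ and $\lambda:=\|f:\mu\|_\varphi$; by Lemma~\ref{lem 2.1} these are characterized by the equalities $\int\varphi_k(f_i/\lambda_{k,i,l})\,d\mu_l=\varphi_k(1)$ and $\int\varphi(f/\lambda)\,d\mu=\varphi(1)$. I would show first that $\{\lambda_{k,i,l}\}$ is confined to a compact subset of $(0,\infty)$. If $\lambda_{k,i,l}\to\infty$ along some sub-triple, then $f_i/\lambda_{k,i,l}\to 0$ uniformly (using $f_i\le 2b$), and the argument of Paragraph~1 forces $\int\varphi_k(f_i/\lambda_{k,i,l})\,d\mu_l\to 0$, contradicting $\varphi_k(1)\to\varphi(1)>0$; if $\lambda_{k,i,l}\to 0$, then $f_i/\lambda_{k,i,l}\to\infty$ uniformly (using $f_i\ge a/2$), and the convexity of $\varphi_k$ together with $\varphi_k\to\varphi$ makes the integrals blow up, again contradicting $\varphi_k(1)\to\varphi(1)$. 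Given this compactness, for any cluster point $\lambda_*\in(0,\infty)$ apply (3.5) along the corresponding subsequence to $f_i/\lambda_{k,i,l}\to f/\lambda_*$ to obtain $\int\varphi(f/\lambda_*)\,d\mu=\varphi(1)$. The uniqueness in Lemma~\ref{lem 2.1} then forces $\lambda_*=\lambda$, so the full triple-indexed sequence converges to $\lambda$, proving (3.7).
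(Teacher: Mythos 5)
Your proposal follows essentially the same route as the paper: establish uniform convergence $\varphi_k(f_i)\to\varphi(f)$ from a common compact interval $I$ containing the ranges, feed that into the weak convergence of $\mu_l$ for (3.5), pass to $\varphi_k^{-1}\to\varphi^{-1}$ uniformly on a compact interval for (3.6), and then for (3.7) show the norms $\|f_i:\mu_l\|_{\varphi_k}$ lie in a compact subinterval of $(0,\infty)$ and use a subsequence argument with the defining identity $\int\varphi_k(f_i/\lambda_{k,i,l})\,d\mu_l=\varphi_k(1)$. The only cosmetic difference is in the compactness step for (3.7): the paper derives the two-sided bound $c_m\le\|f_i:\mu_l\|_{\varphi_k}\le c_M$ directly from the monotonicity of $\varphi_k$ applied to $c_m\le f_i\le c_M$ inside the integral, whereas you argue by contradiction (showing $\lambda\to 0$ and $\lambda\to\infty$ are both impossible); both are sound and amount to the same estimate. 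You are right to flag that pointwise convergence of continuous functions on $S^{n-1}$ does not in general give uniform convergence — the paper asserts this without justification, but it does hold in every application (convergence in the dual Hausdorff metric), so the lemma is used correctly even if stated loosely.
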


\begin{proof}
The continuity of $f_i$ and $f$,  and $f_i\to f$  pointwise
guarantee that $f_i\to f$ uniformly. Thus,
there exists an $N_0\in\mathbb N$, such that
\[\frac{1}{2}\mathop {\min }_{u \in {S^{n - 1}}} f(u) \le {f_i} \le 2\mathop {\max }_{u \in {S^{n - 1}}} f(u),\quad {\rm for}\; i>N_0.\]

Let
\[{c_m} = \min \left\{ {\frac{1}{2}\mathop {\min }_{u \in {S^{n - 1}}} f(u),\mathop {\min }_{u \in {S^{n - 1}}} {f_i}(u),\;{\rm{with}}\;i \le N_0} \right\},\]
and
\[{c_M} = \max \left\{ {2\mathop {\max }_{u \in {S^{n - 1}}} f(u),\mathop {\max }_{u \in {S^{n - 1}}} {f_i}(u),\;{\rm{with}}\;i \le N_0} \right\}.\]
The strictly positivity and the continuity of $f_i$ and $f$ imply that
\[0<c_m\le c_M<\infty.\]

Thus,
\begin{equation}\label{(3.6)}
c_m\le f(u)\le c_M\quad {\rm and}\quad c_m\le f_i(u)\le c_M,   \quad {\rm for}\; u\in S^{n-1}\; {\rm and}\; i\in\mathbb N.
\end{equation}

Since $\varphi_k\to\varphi$ uniformly on $[c_m,c_M]$,   by (\ref{(3.6)}) and  that $f_i\to f$ uniformly, it follows that as $i,k\to\infty$,
\[\varphi_k(f_i)\to \varphi(f),\quad  {\rm uniformly\; on}\; S^{n-1}.\]
Combined with that $\mu_l\to\mu$ weakly, it concludes that as $i,k,l\to\infty$,
\[\int_{{S^{n - 1}}} {{\varphi _k}\left( {{f_i}} \right)d{\mu _l}}  \to \int_{{S^{n - 1}}} {\varphi \left( f \right)d\mu }, \]
as (\ref{crucial lim1}) desired.

Now, we proceed to prove (\ref{crucial lim2}).

For brevity, let
\[{a_{i,k,l}} = \int_{{S^{n - 1}}} {{\varphi _k}\left( {{f_i}} \right)d{\mu _l}} \quad{\rm{and}}\quad a = \int_{{S^{n - 1}}} {\varphi \left( f \right)d\mu } .\]
Then
\[\varphi ({c_m}) \le a \le \varphi ({c_M})\quad {\rm{and}}\quad {\varphi _k}({c_m}) \le {a_{i,k,l}} \le {\varphi _k}({c_M}), \quad {\rm for}\; i,k,l\in\mathbb N.\]

Let
\[{a_m} = \inf \left\{ {\varphi ({c_m}),{\varphi _k}({c_m}),\;{\rm{with}}\;k \in \mathbb N} \right\},\]
and
\[{a_M} = \sup \left\{ {\varphi ({c_M}),{\varphi _k}({c_M}),\;{\rm{with}}\;k \in \mathbb N} \right\}.\]
Since $\varphi_k(c_m)\to\varphi(c_m)$ and $\varphi_k(c_M)\to\varphi(c_M),$  it gives
\[ 0<a_m\le a_M<\infty\quad {\rm and}\quad a,a_{i,k,l}\in [a_m,a_M], \quad {\rm for}\; i,k,l\in\mathbb N. \]

Since $\varphi_k\to\varphi$ uniformly on $[c_m,c_M]$, it follows that
\[ \varphi^{-1}_k\to\varphi^{-1},\quad {\rm  uniformly\; on}\; [a_m,a_M].\]
Thus, from that $a_{i,k,l}\to a$ as $i,k,l\to\infty$, it follows that
\[ \varphi^{-1}_k(a_{i,k,l})\to\varphi^{-1}(a),\quad {\rm as}\; i,k,l\to\infty,\]
as (\ref{crucial lim2}) desired.

Finally, we conclude to  show (\ref{crucial lim3}).

At first, we prove that the  set
$\left\{\|f_i:\mu_l\|_{\varphi_k}: i,k,l\in\mathbb N\right\} $  is bounded.

Indeed, from  (\ref{(3.6)}) together with the strict monotonicity of $\varphi$ and $\varphi^{-1}$, Lemma \ref{lem 2.1},
and (\ref{(3.6)}) together with the strict monotonicity of $\varphi$ and $\varphi^{-1}$ again, it follows that
\begin{align*}
\frac{{{c_m}}}{{{{\left\| {{f_i}:{\mu _l}} \right\|}_{{\varphi _k}}}}} &\le \varphi _k^{ - 1}\left( {\int_{{S^{n - 1}}} {{\varphi _k}\left( {\frac{{{f_i}}}{{{{\left\| {{f_i}:{\mu _l}} \right\|}_{{\varphi _k}}}}}} \right)d{\mu _l}} } \right)\\&= 1 \\
&\le  \frac{{{c_M}}}{{{{\left\| {{f_i}:{\mu _l}} \right\|}_{{\varphi _k}}}}},
\end{align*}
which immediately gives
\[{c_m} \le {\left\| {{f_i}:{\mu _l}} \right\|_{{\varphi _k}}} \le {c_M},\quad{\rm{for}}\;i,k,l \in \mathbb N.\]

Now, we can complete the proof of  (\ref{crucial lim3}).

Since $\left\{\|f_i:\mu_l\|_{\varphi_k}: i,k,l\in\mathbb N\right\} $
is bounded, to prove (\ref{crucial lim3}), it suffices to prove that
each convergent subsequence
$\{\|f_{i_p}:\mu_{l_r}\|_{\varphi_{k_q}}\}_{p,q,r\in\mathbb N}$ of $\left\{\|f_i:\mu_l\|_{\varphi_k}: i,k,l\in\mathbb N\right\} $
necessarily converges to $\|f:\mu\|_\varphi$, as $i_p,k_q,l_r\to\infty$.

Assume
\[\mathop {\lim }\limits_{p,q,r \to \infty } {\left\| {{f_{{i_p}}}:{\mu _{{l_r}}}} \right\|_{{\varphi _{{k_q}}}}} = {\lambda _0}.\]

Note that
\[\frac{{{f_{{i_p}}}}}{{{{\left\| {{f_{{i_p}}}:{\mu _{{l_r}}}} \right\|}_{{\varphi _{{k_q}}}}}}} \to \frac{f}{{{\lambda _0}}}\; {\rm pointwise},\quad{\varphi _{{k_q}}} \to \varphi ,\quad{\rm{and}}\quad{\mu _{{l_r}}} \to \mu \;{\rm{weakly}},\]
by (\ref{crucial lim2}), we have
\[\mathop {\lim }_{p,q,r \to \infty } \varphi _{{k_q}}^{ - 1}\left( {\int_{{S^{n - 1}}} {{\varphi _{{k_q}}}\left( {\frac{{{f_{{i_p}}}}}{{{{\left\| {{f_{{i_p}}}:{\mu _{{l_r}}}} \right\|}_{{\varphi _{{k_q}}}}}}}} \right)d{\mu _{{l_r}}}} } \right) = {\varphi ^{ - 1}}\left( {\int_{{S^{n - 1}}} {\varphi \left( {\frac{f}{{{\lambda _0}}}} \right)d\mu } } \right).\]

Meanwhile, since
\[\varphi _{{k_q}}^{ - 1}\left( {\int_{{S^{n - 1}}} {{\varphi _{{k_q}}}\left( {\frac{{{f_{{i_p}}}}}{{{{\left\| {{f_{{i_p}}}:{\mu _{{l_r}}}} \right\|}_{{\varphi _{{k_q}}}}}}}} \right)d{\mu _{{l_r}}}} } \right) = 1, \quad {\rm for\; each}\; (p,q,r),\]
it yields that
\[\mathop {\lim }\limits_{p,q,r \to \infty } \varphi _{{k_q}}^{ - 1}\left( {\int_{{S^{n - 1}}} {{\varphi _{{k_q}}}\left( {\frac{{{f_{{i_p}}}}}{{{{\left\| {{f_{{i_p}}}:{\mu _{{l_r}}}} \right\|}_{{\varphi _{{k_q}}}}}}}} \right)d{\mu _{{l_r}}}} } \right) = 1.\]

Hence,
\[{\varphi ^{ - 1}}\left( {\int_{{S^{n - 1}}} {\varphi \left( {\frac{f}{{{\lambda _0}}}} \right)d\mu } } \right) = 1.\]
From Lemma \ref{lem 2.1}, it follows that
$\lambda_0=\|f:\mu\|_\varphi$.

The proof is complete.
\end{proof}

Using Lemma \ref{lem 3.7}, we  immediately obtain
\begin{lemma}\label{lem 3.8}
Suppose $K,K_i,L,L_j\in\mathcal{S}^n_o$ and
$\varphi,\varphi_k\in\Phi$,  $i,j,k\in\mathbb N$. If $K_i\to K$, $L_j\to L$ and $\varphi_k\to\varphi$, then
\[\mathop {\lim }\limits_{i,j,k \to \infty } {{\tilde V}_{{\varphi _k}}}({K_i},{L_j}) = {{\tilde V}_\varphi }(K,L),\]
\[\mathop {\lim }\limits_{i,j,k \to \infty } {{\bar{\tilde V}}_{{\varphi _k}}}({K_i},{L_j}) = {{\bar{\tilde V}}_\varphi }(K,L),\]
and
\[\mathop {\lim }\limits_{i,j,k \to \infty } {O_{{\varphi _k}}}({K_i},{L_j}) = {O_\varphi }(K,L).\]
\end{lemma}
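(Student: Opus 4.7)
The plan is to reduce each of the three limits directly to Lemma~\ref{lem 3.7}. That lemma produces convergence of dual Orlicz integrals, of $\varphi^{-1}$ applied to such integrals, and of Orlicz norms, provided the integrands converge pointwise, the Orlicz functions converge ($\varphi_k\to\varphi$), and the reference Borel probability measures converge weakly. So what is really needed is to supply these three inputs from the hypotheses $K_i\to K$, $L_j\to L$ in $\mathcal S^n_o$.

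First I would unpack the convergence in $\mathcal S^n_o$. By definition of $\tilde\delta_H$, $K_i\to K$ and $L_j\to L$ amount to $\rho_{K_i}\to\rho_K$ and $\rho_{L_j}\to\rho_L$ uniformly on $S^{n-1}$. Since $\rho_K,\rho_L$ are strictly positive and continuous on the compact sphere, uniform convergence transfers a uniform positive lower bound to $\rho_{L_j}$ for all large $j$, so the quotient $f_{i,j}:=\rho_{K_i}/\rho_{L_j}$ is well-defined, strictly positive, and continuous, and converges pointwise (indeed uniformly) to $f:=\rho_K/\rho_L$.

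Second, I would promote uniform convergence of radial functions to weak convergence of normalized dual conical measures. From $V(K_i)=\tfrac{1}{n}\int_{S^{n-1}}\rho_{K_i}^n\,dS\to V(K)$, the densities $\rho_{K_i}^n/(nV(K_i))$ of $V^*_{K_i}$ converge uniformly to the density of $V^*_K$ on $S^{n-1}$, which is far stronger than the weak convergence $V^*_{K_i}\to V^*_K$ required by Lemma~\ref{lem 3.7}. With these inputs in hand, the three conclusions of Lemma~\ref{lem 3.7} give, respectively: the limit of $\int \varphi_k(f_{i,j})\,dV^*_{K_i}$, which after multiplying by $V(K_i)\to V(K)$ yields $\tilde V_{\varphi_k}(K_i,L_j)\to \tilde V_\varphi(K,L)$; the limit of $\varphi_k^{-1}$ of that integral, which is exactly $\bar{\tilde V}_{\varphi_k}(K_i,L_j)\to\bar{\tilde V}_\varphi(K,L)$ by Definition~\ref{def 3.1}; and the limit of the associated Orlicz norms, which by Definition~\ref{def 3.3} is $O_{\varphi_k}(K_i,L_j)\to O_\varphi(K,L)$.

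I do not anticipate a genuine obstacle: the argument is essentially a bookkeeping exercise showing that the three ingredients of Lemma~\ref{lem 3.7} follow automatically from the dual Hausdorff convergence of the bodies. The only minor care required is tracking the difference between the unnormalized dual conical measure $\tilde V_K$ appearing in the subscript of the Orlicz norm in Definition~\ref{def 3.3} and the probability measure $V^*_K$ used inside Lemma~\ref{lem 3.7}; since these differ only by the scalar $V(K)$, which itself depends continuously on $K$, and since the Orlicz norm is invariant under positive rescaling of the reference measure (the normalization $1/|\mu|$ absorbs it), this distinction is harmless.
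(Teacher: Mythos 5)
Your proposal is correct and matches the paper's own proof, which likewise reduces all three limits to Lemma~\ref{lem 3.7} by noting that $\rho_{K_i}/\rho_{L_j}\to\rho_K/\rho_L$ uniformly and that $V^*_{K_i}\to V^*_K$ weakly (along with $\tilde V_{K_i}\to\tilde V_K$ for the unnormalized functional). The extra observation you make about the Orlicz norm being insensitive to rescaling the reference measure is a correct and worthwhile clarification, though the paper glosses over it.
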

\begin{proof}
That $K_i\to K$ and $L_j\to L$  yields
$\rho_{K_i}/\rho_{L_j}$ and $\rho_K/\rho_L$ are strictly positive
continuous on  $S^{n-1}$; $\rho_{K_i}/\rho_{L_j}\to \rho_{K}/\rho_L$; $\tilde{V}_{K_i}\to \tilde{V}_{K}$ weakly, and
$V_{K_i}^*\to V_K^*$ weakly.  Combining these facts  and applying Lemma \ref{lem 3.7},   the desired
limits can be derived directly.
\end{proof}

Recall that
\[{\bar{\tilde V}}_{-1}(K,L)=\int_{S^{n-1}}{\frac{\rho_K}{\rho_L} dV_K^*},\quad {\rm for}\; K,L\in\mathcal{S}^n_o. \]

The next lemma will be used in Section 6.
\begin{lemma}\label{lem 3.9}
Suppose $K,L\in\mathcal{S}^n_o$, $\varphi\in\Phi$ and $p\in [1,\infty)$. Then

\noindent (1) $ {\bar{\tilde V}}_{\varphi^p}(K,L) $ is increasing and bounded from above in $p$,
and bounded from below by ${\bar{\tilde V}}_{-1}(K,L)$.

\noindent (2) $\lim\limits_{p\to\infty}{\bar{\tilde V}}_{\varphi^p}(K,L) =\left\|\frac{\rho_K}{\rho_L} \right\|_\infty$.

\noindent (3) $O_{\varphi^p}(K,L)$ is increasing and bounded from above in $p$, and
bounded from below by ${\bar{\tilde V}}_{-1}(K,L)$.

\noindent (4) $\lim\limits_{p\to\infty} O_{\varphi^p}(K,L)=\left\|\frac{\rho_K}{\rho_L} \right\|_\infty$.
\end{lemma}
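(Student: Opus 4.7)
The plan is to reduce all four assertions to classical facts about $L^p$ norms with respect to the probability measure $V_K^*$. Set $f := \varphi(\rho_K/\rho_L)$, which is strictly positive and continuous on the compact sphere $S^{n-1}$, hence bounded between two positive constants. Since $V_K^*$ is a Borel probability measure on $S^{n-1}$ with strictly positive density $\rho_K^n/(nV(K))$ (so has full support), and since $(\varphi^p)^{-1}(s) = \varphi^{-1}(s^{1/p})$, Definition \ref{def 3.1} gives the identity
$${\bar{\tilde V}}_{\varphi^p}(K,L) = \varphi^{-1}\!\left(\|f\|_{L^p(V_K^*)}\right),$$
which is the workhorse for (1) and (2).

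For (1), the classical monotonicity $\|f\|_{L^p(\mu)} \le \|f\|_{L^q(\mu)}$ for $p \le q$ on a probability space (a one-line consequence of H\"older with exponent $q/p$), combined with the strict monotonicity of $\varphi^{-1}$, shows ${\bar{\tilde V}}_{\varphi^p}(K,L)$ is non-decreasing in $p$ and bounded above by $\varphi^{-1}(\|f\|_\infty) = \|\rho_K/\rho_L\|_\infty$. For the lower bound, Jensen's inequality applied to the convex $\varphi$ at $p=1$ gives
$${\bar{\tilde V}}_\varphi(K,L) = \varphi^{-1}\!\left(\int_{S^{n-1}} \varphi(\rho_K/\rho_L)\, dV_K^*\right) \ge \int_{S^{n-1}} \frac{\rho_K}{\rho_L}\, dV_K^* = {\bar{\tilde V}}_{-1}(K,L),$$
and the monotonicity in $p$ extends this to all $p \ge 1$. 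For (2), the classical fact $\|f\|_{L^p(V_K^*)} \to \|f\|_\infty$ as $p \to \infty$ (using continuity of $f$ and the full support of $V_K^*$ to identify the essential supremum with the true supremum), together with continuity of $\varphi^{-1}$, yields
$$\lim_{p\to\infty} {\bar{\tilde V}}_{\varphi^p}(K,L) = \varphi^{-1}\!\bigl(\varphi(\|\rho_K/\rho_L\|_\infty)\bigr) = \|\rho_K/\rho_L\|_\infty.$$

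For (3) and (4), set $\lambda_p := O_{\varphi^p}(K,L)$ and $g_\lambda := \varphi(\rho_K/(\lambda \rho_L))$. By Lemma \ref{lem 2.1} applied to $\varphi^p$, $\lambda_p$ is the unique positive solution of $\|g_{\lambda_p}\|_{L^p(V_K^*)} = \varphi(1)$. Since $g_\lambda$ is strictly decreasing in $\lambda$, the monotonicity of $L^p$ norms gives
$$\|g_{\lambda_p}\|_{L^q(V_K^*)} \ge \|g_{\lambda_p}\|_{L^p(V_K^*)} = \varphi(1) = \|g_{\lambda_q}\|_{L^q(V_K^*)}\quad\text{for } p \le q,$$
forcing $\lambda_p \le \lambda_q$. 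Choosing $\lambda > \|\rho_K/\rho_L\|_\infty$ makes $g_\lambda < \varphi(1)$ uniformly on $S^{n-1}$, so $\|g_\lambda\|_{L^p} < \varphi(1)$ and therefore $\lambda_p \le \|\rho_K/\rho_L\|_\infty$. For the lower bound, Lemma \ref{lem 3.5} combined with (\ref{ineq 3.6.1}) (or, equivalently, Jensen applied at $p=1$) yields $1 = {\bar{\tilde V}}_\varphi(K, O_\varphi(K,L)L) \ge {\bar{\tilde V}}_{-1}(K,L)/O_\varphi(K,L)$, so $O_\varphi(K,L) \ge {\bar{\tilde V}}_{-1}(K,L)$, and the monotonicity in $p$ extends this to all $p \ge 1$.

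The only step requiring real care is identifying the limit in (4). Since $\{\lambda_p\}$ is non-decreasing and bounded above by $\|\rho_K/\rho_L\|_\infty$, it converges to some $\lambda_\infty \le \|\rho_K/\rho_L\|_\infty$. For the reverse inequality, $\lambda_p \le \lambda_\infty$ implies $g_{\lambda_p} \ge g_{\lambda_\infty}$ pointwise on $S^{n-1}$, so $\varphi(1) = \|g_{\lambda_p}\|_{L^p(V_K^*)} \ge \|g_{\lambda_\infty}\|_{L^p(V_K^*)}$; passing $p \to \infty$ gives $\varphi(1) \ge \|g_{\lambda_\infty}\|_\infty = \varphi(\|\rho_K/\rho_L\|_\infty/\lambda_\infty)$, hence $\lambda_\infty \ge \|\rho_K/\rho_L\|_\infty$ by monotonicity of $\varphi$. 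Everything else is routine manipulation of probability-space $L^p$ norms together with the identities already established in this section.
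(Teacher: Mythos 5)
Your proof is correct. Parts (1) and (2) follow the paper's own route almost verbatim: the identity ${\bar{\tilde V}}_{\varphi^p}(K,L)=\varphi^{-1}\bigl(\|\varphi(\rho_K/\rho_L)\|_{L^p(V_K^*)}\bigr)$, monotonicity of $L^p$ norms on a probability space, Jensen for the lower bound, and $\|\cdot\|_{L^p}\to\|\cdot\|_\infty$ for the limit. Where you genuinely diverge is in (3) and (4). For (3) the paper deduces monotonicity of $O_{\varphi^p}(K,L)$ indirectly, from the nesting of the constraint sets $\{\lambda:{\bar{\tilde V}}_{\varphi^p}(K,\lambda L)\le 1\}$ implied by (1), whereas you work directly with the characterizing equation $\|g_{\lambda_p}\|_{L^p(V_K^*)}=\varphi(1)$ and the strict decrease of $\lambda\mapsto g_\lambda$; both are sound, and yours has the advantage of also delivering the upper bound $\lambda_p\le\|\rho_K/\rho_L\|_\infty$ by an immediate pointwise comparison. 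For (4) the difference is more substantial: the paper extracts a convergent subsequence $\lambda_j\to\lambda_0$, introduces the functions $g_j(\lambda)=\bigl[\int\varphi(\rho_K/(\lambda\rho_L))^{p_j}dV_K^*\bigr]^{1/p_j}$, and needs the convergence $g_j\to g_\infty$ to be \emph{uniform} on a compact interval (which, as stated, really rests on Dini's theorem, since pointwise convergence of continuous functions to a continuous limit is not uniform in general) in order to justify $\lim_j g_j(\lambda_j)=g_\infty(\lambda_0)$. Your squeeze argument — monotone bounded implies convergence to some $\lambda_\infty$, then the two-sided comparison $\varphi(1)=\|g_{\lambda_p}\|_{L^p}\ge\|g_{\lambda_\infty}\|_{L^p}\to\|g_{\lambda_\infty}\|_\infty$ pins down $\lambda_\infty=\|\rho_K/\rho_L\|_\infty$ — avoids subsequences and uniform convergence entirely, at the cost only of reusing the $L^p\to L^\infty$ fact already needed for (2). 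This is a cleaner and slightly more robust closing step than the one in the paper.
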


\begin{proof}
Let $\lambda\in (0,\infty)$. From Definition \ref{def 3.1}, we have
\[{{\bar{\tilde V}}_{{\varphi ^p}}}(K,\lambda L) = {\varphi ^{ - 1}}\left( {{{\left( {\int_{{S^{n - 1}}} {\varphi {{\left( {\frac{{{\rho _K}}}{{\lambda {\rho _L}}}} \right)}^p}dV_K^*} } \right)}^{1/p}}} \right).\]
By Jensen's inequality,  ${{{\left( {\int_{{S^{n - 1}}} {\varphi {{\left( {\frac{{{\rho _K}}}{{\lambda {\rho _L}}}} \right)}^p}dV_K^*} } \right)}^{1/p}}}$
is increasing in $p\in [1,\infty)$.  Since $\varphi^{-1}$ is also increasing in
$(0,\infty)$, it yields that ${{\bar{\tilde V}}_{{\varphi ^p}}}(K,\lambda L)$
is increasing in $p\in [1,\infty)$.

Since $\varphi^{-1}$ and  $\varphi$ are both continuous and strictly
increasing on $[0,\infty)$, it follows that
\begin{align*}
\mathop {\lim }\limits_{p \to \infty } {{\bar{\tilde V}}_{{\varphi ^p}}}(K,\lambda L) &= \mathop {\lim }_{p \to \infty } {\varphi ^{ - 1}}\left( {{{\left( {\int_{{S^{n - 1}}} {\varphi {{\left( {\frac{{{\rho _K}}}{{\lambda {\rho _L}}}} \right)}^p}dV_K^*} } \right)}^{1/p}}} \right) \\
&= {\varphi ^{ - 1}}\left( {\mathop {\lim }_{p \to \infty } {{\left( {\int_{{S^{n - 1}}} {\varphi {{\left( {\frac{{{\rho _K}}}{{\lambda {\rho _L}}}} \right)}^p}dV_K^*} } \right)}^{1/p}}} \right) \\
&= {\varphi ^{ - 1}}\left( {\max \left\{ {\varphi \left( {\frac{{{\rho _K}(u)}}{{\lambda {\rho _L}(u)}}} \right):u \in {S^{n - 1}}} \right\}} \right) \\
&= {\varphi ^{ - 1}}\left( {\varphi \left( {\max \left\{ {\frac{{{\rho _K}(u)}}{{\lambda {\rho _L}(u)}}:u \in {S^{n - 1}}} \right\}} \right)} \right) \\
&= \frac{1}{\lambda }{\left\| {\frac{{{\rho _K}}}{{{\rho _L}}}} \right\|_\infty}.
\end{align*}
Thus, $ {\bar{\tilde V}}_{\varphi^p}(K,\lambda L) $ is bounded from above by
$\frac{1}{\lambda }{\left\| {\frac{{{\rho _K}}}{{{\rho _L}}}} \right\|_\infty }$.

From the definition of ${\bar{\tilde V}}_{-1}(K,\lambda L)$, the
strict monotonicity of $\varphi^{-1}$ together with the convexity of
$\varphi$ and Jensen's inequality, and  the definition of ${\bar{\tilde V}}_\varphi(K,L)$, we have
\begin{align*}
{\bar{\tilde V}}_{-1}(K,\lambda L) &= {\varphi ^{ - 1}}\left( {\varphi \left( {\int_{{S^{n - 1}}} {\frac{{{\rho _K}}}{{\lambda {\rho _L}}}dV_K^*} } \right)} \right) \\
&\le {\varphi ^{ - 1}}\left( {\int_{{S^{n - 1}}} {\varphi \left( {\frac{{{\rho _K}}}{{\lambda {\rho _L}}}} \right)dV_K^*} } \right)\\
&={\bar{\tilde V}}_\varphi(K,\lambda L).
\end{align*}
Thus, ${\bar{\tilde V}}_{-1}(K,\lambda L) \le {\bar{\tilde V}}_\varphi(K,\lambda L)$.

Let $\lambda=1$, it gives (1) and (2) directly.

Recall that
\[{O_{{\varphi ^p}}}(K,L) = \inf \left\{ {\lambda  > 0:{\bar{\tilde V}}_{\varphi^p}(K,\lambda^{-1} L) \le 1} \right\}.\]
So, for $1\le p<q<\infty$, from (1) we have
\[{\bar{\tilde V}}_{-1}(K,\lambda^{-1} L) \le {{\bar{\tilde V}}_\varphi }(K,\lambda^{-1} L)
\le {{\bar{\tilde V}}_{{\varphi ^p}}}(K, \lambda^{-1} L) \le
{{\bar{\tilde V}}_{{\varphi ^q}}}(K,\lambda^{-1} L) \le \lambda {\left\| {\frac{{{\rho _K}}}{{{\rho _L}}}} \right\|_\infty }.\]
Thus, we obtain
\[{\bar{\tilde V}}_{-1}(K, L) \le {O_\varphi }(K, L) \le {O_{{\varphi^p}}}(K, L) \le {O_{{\varphi ^q}}}(K, L) \le {\left\| {\frac{{{\rho _K}}}{{{\rho _L}}}} \right\|_\infty }, \]
which implies (3) immediately.

 By (3), any  subsequence $\left\{O_{\varphi^{p_j}}(K,L)\right\}_{j}$, with
$\lim\limits_{j\to\infty}p_j=\infty$,  must converge to certain number
$\lambda_0\in [{\bar{\tilde V}}_{-1}(K, L), \|\rho_K/\rho_L\|_\infty]$. So, to prove (4), it suffices to prove
\[\lambda_0 = {\left\| {\frac{{{\rho _K}}}{{{\rho _L}}}} \right\|_\infty }.\]

For brevity, let
\[\lambda_\infty={\left\| {\frac{{{\rho _K}}}{{{\rho _L}}}} \right\|_\infty }\quad {\rm and} \quad {\lambda _j} = O_{\varphi^{p_j}}(K,L).\]

For each $j$, define
\[{g_j}(\lambda ) = {\left[ { \int_{{S^{n - 1}}} {\varphi {{\left( {\frac{{{\rho _K}}}{{\lambda {\rho _L}}}} \right)}^{{p_j}}}d{V^*_K}} } \right]^{1/{p_j}}},\]
and
\[{g_\infty }(\lambda ) = \varphi\left({\lambda ^{ - 1}}{\left\| {\frac{{{\rho _K}}}{{{\rho _L}}}} \right\|_\infty }\right).\]
Note that the functions $g_j$ and $g_\infty$ are continuous on $[\lambda_1,\lambda_\infty]$,  and
$g_j\to g_\infty$ pointwise on $[\lambda_1,\lambda_\infty]$ by (1). Thus,
$g_j\to g_\infty$, uniformly on $[\lambda_1,\lambda_\infty]$.

Consequently, we have
\[\mathop {\lim }_{j \to \infty } {g_j}({\lambda _j}) = \left( {\mathop {\lim }_{j \to \infty } {g_j}} \right)(\mathop {\lim }_{j \to \infty } {\lambda _j}) = {g_\infty }({\lambda _0}).\]
Note that $g_j(\lambda_j)=\varphi(1)$ for each $j$. Hence, we obtain
\[ {g_\infty }({\lambda _0})=\varphi (1) ;\quad {\rm i.e.},\quad {\lambda _0} = {\left\| {\frac{{{\rho _K}}}{{{\rho _L}}}} \right\|_\infty }.\]

The proof is complete.
\end{proof}

\vskip 25pt
\section{\bf Orlicz-Legendre ellipsoids}
\vskip 10pt

Let $K\in \mathcal{S}^n_o$ and $\varphi\in\Phi$. For any $T\in {\rm SL}(n)$, by Lemma \ref{lem 3.6} it gives
\[{{\bar{\tilde V}}_\varphi }(K,TB) \ge {\left( {\frac{{V(K)}}{{{\omega _n}}}} \right)^{\frac{1}{n}}}\quad {\rm and}\quad O_\varphi(K,TB)\ge  {\left( {\frac{{V(K)}}{{{\omega _n}}}} \right)^{\frac{1}{n}}}.\]

In view of the intimate connection between $\bar{\tilde V}_\varphi$  and $O_\varphi$, to find the so-called Orlicz-Legendre ellipsoids,
we also consider the following three problems, which are closely related to our originally posed Problem ${\tilde S}_\varphi$.

\noindent\textbf{Problem ${\rm P}_1$.}  \emph{Find an ellipsoid $E$, amongst all origin-symmetric
ellipsoids, which solves the  constrained minimization problem}
\[\min  {\bar{\tilde V}}_\varphi (K,E)\quad{\rm{subject}}\;{\rm{to}}\quad V(E) \le {\omega _n}.\]

\noindent\textbf{Problem ${\rm P}_2$.} \emph{Find an ellipsoid $E$, amongst all origin-symmetric
ellipsoids, which solves the  constrained minimization problem}
\[\min O_\varphi (K,E)\quad{\rm{subject}}\;{\rm{to}}\quad V(E) \le {\omega _n}.\]

The homogeneity of volume functional and Orlicz norm prompts us to
consider the following Problem ${\rm P}_3$, which is in some sense  dual to Problem ${\rm P}_2$.

\noindent\textbf{Problem ${\rm P}_3$.} \emph{Find an ellipsoid $E$, amongst all origin-symmetric ellipsoids,
which solves the  constrained maximization problem}
\[\max \left(\frac{{{\omega _n}}}{{V(E)}}\right)^{\frac{1}{n}}\quad{\rm{subject}}\;{\rm{to}}\quad O_\varphi (K,E)\le 1.\]

In order to convenient comparison, we restate Problem ${\tilde S}_\varphi$ as the following.

\noindent\textbf{Problem ${\tilde S}_\varphi$.} \emph{Find an
ellipsoid $E$, amongst all origin-symmetric ellipsoids, which solves
the constrained maximization problem}
\[\max \left(\frac{{{\omega _n}}}{{V(E)}}\right)^{\frac{1}{n}}\quad{\rm{subject}}\;{\rm{to}}\quad {\bar{\tilde V}}_\varphi (K,E)\le 1.\]

Two observations are in order. First, from Definition \ref{def 3.1} together with the fact that $\varphi^{-1}$ is strictly increasing in
$(0,\infty)$, the objective functional in ${\rm P}_1$ can be
replaced by $\tilde V_\varphi(K,E)$. Second, by the fact
$V(E)V(E^*)=\omega_n^2$, the  objective functional in ${\rm P}_3$ and
${\tilde S}_\varphi$ can be replaced by $V(E^*)$.

This section is organized as follows.  After proving Lemmas \ref{lem 4.1} and \ref{lem 4.2}, 
we prove Theorems \ref{thm 4.3} and \ref{thm 4.4}, which demonstrate the existence and  uniqueness of solution to
${\rm P}_1$, respectively. The connection between ${\rm P}_1$ and
${\rm P}_2$ is established by Lemma \ref{lem 4.5}, then the unique
existence of solution to  ${\rm P}_2$ is shown in  Theorem 
\ref{thm 4.6}.  Theorem \ref{thm 4.7} shows that the solutions to 
${\rm P}_2$ and ${\rm P}_3$ only differ by a scale factor. Thus,  the
unique existence  of solution to ${\rm P}_3$ is confirmed.  Lemma
\ref{lem 4.8} reveals that  ${\rm P}_3$ and ${\tilde S}_\varphi$ are
essentially identical, so the proof of the unique existence  of
solution to ${\tilde S}_\varphi$ is complete. Therefore, the notion
of  Orlicz-Legendre ellipsoid is ready to come out.

\begin{lemma}\label{lem 4.1}
Suppose $K\in\mathcal{S}^n_o$ and $\varphi\in\Phi$. Then
\[\mathop {\lim }\limits_{\scriptstyle T \in {\rm SL}(n) \atop \scriptstyle \left\| T \right\| \to \infty } {{\tilde V}_\varphi }(TK,B) = \infty, \]
and
\[ \mathop {\lim }\limits_{\scriptstyle T \in {\rm{SL}}(n) \atop \scriptstyle \left\| T \right\| \to \infty } O_\varphi(TK,B)= \infty .\]
\end{lemma}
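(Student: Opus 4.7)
The plan is to reduce the statement to a lower bound on an integral involving only $|Tu|$, and then exploit the fact that $|Tu|$ is large on a fixed-measure cap in $S^{n-1}$, namely the cap around the top singular direction of $T$, whenever $\|T\|$ is large. The decisive step is the reformulation: since $|T|=1$, Lemma \ref{lem 3.2}(1) gives $\tilde V_\varphi(TK,B)=\tilde V_\varphi(K,T^{-1}B)$, and together with $\rho_{T^{-1}B}(u)=1/|Tu|$ this yields
\[
\tilde V_\varphi(TK,B)=\frac{1}{n}\int_{S^{n-1}}\varphi\bigl(\rho_K(u)|Tu|\bigr)\rho_K(u)^n\,dS(u).
\]
Choose $r_0>0$ with $r_0 B\subseteq K$, so that $\rho_K\ge r_0$. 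Monotonicity of $\varphi$ then gives $\tilde V_\varphi(TK,B)\ge (r_0^n/n)\int_{S^{n-1}}\varphi(r_0|Tu|)\,dS$.

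Next, let $v_T\in S^{n-1}$ be a unit eigenvector of $T^tT$ with eigenvalue $\|T\|^2$, so $|Tu|\ge\|T\||u\cdot v_T|$ by the Rayleigh quotient. By rotational invariance of $dS$,
\[
\int_{S^{n-1}}\varphi(r_0|Tu|)\,dS\;\ge\;\int_{S^{n-1}}\varphi\bigl(r_0\|T\||u\cdot v_T|\bigr)dS(u)\;=\;\int_{S^{n-1}}\varphi\bigl(r_0\|T\||u_1|\bigr)dS(u).
\]
Fix any $\varepsilon>0$ with $c:=\mathcal{H}^{n-1}(\{u\in S^{n-1}:|u_1|\ge\varepsilon\})>0$; restricting the last integrand to that set gives the bound $c\,\varphi(r_0\varepsilon\|T\|)$. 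Since $\varphi(0)=0$ and $\varphi$ is convex, the slope $\varphi(t)/t$ is non-decreasing, forcing $\varphi(t)\ge t\varphi(1)\to\infty$. This establishes the first limit.

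For the second limit, Lemma \ref{lem 3.5} characterizes $\lambda_T:=O_\varphi(TK,B)$ by $\tilde V_\varphi(TK,\lambda_T B)=\varphi(1)V(K)$. Suppose, for contradiction, some sequence $T_j$ with $\|T_j\|\to\infty$ had $\lambda_{T_j}\le M<\infty$. Strict monotonicity in $\lambda$ (Lemma \ref{lem 2.1}) would give $\varphi(1)V(K)=\tilde V_\varphi(T_jK,\lambda_{T_j}B)\ge\tilde V_\varphi(T_jK,MB)$. But the first limit applied to $\varphi_M(t):=\varphi(t/M)\in\Phi$, together with the identity $\tilde V_{\varphi_M}(T_jK,B)=\tilde V_\varphi(T_jK,MB)$, shows $\tilde V_\varphi(T_jK,MB)\to\infty$, a contradiction. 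The main obstacle throughout is the reformulation: a direct pointwise bound $\rho_{TK}(u)\ge r_0/|T^{-1}u|$ makes $\rho_{TK}$ large only on a cap whose measure shrinks as $\|T\|\to\infty$, and it is not obvious how to compensate. Replacing $\rho_{TK}$ by $\rho_K(u)|Tu|$ via Lemma \ref{lem 3.2} sidesteps this, because $|Tu|$ is comparable to $\|T\|$ on the $T$-independent cap $\{|u\cdot v_T|\ge\varepsilon\}$, whose measure rotational invariance of $dS$ turns into a constant.
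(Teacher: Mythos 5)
Your proof is correct, but it takes a genuinely different route from the paper's. The paper also starts from $\tilde V_\varphi(TK,B)=\tilde V_\varphi(K,T^{-1}B)$ (equivalently, after an SVD $T=O_1AO_2$, reduces to a diagonal $A$); but it then applies Jensen's inequality (using convexity of $\varphi$) to pull $\varphi$ outside the normalized integral, and lower-bounds the resulting linear quantity $\frac{1}{nV(K)}\int_{S^{n-1}}\rho_{AO_2K}^{n+1}\,dS$ via a polar-coordinate identity and a cube-inside-ball estimate, eventually getting $\tilde V_\varphi(TK,B)/V(K)\ge\varphi(C_1\|T\|)$ with an explicit constant $C_1$ depending on $K$. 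You avoid Jensen entirely: after the same reformulation and the crude bound $\rho_K\ge r_0$, you note that $|Tu|\ge\|T\||u\cdot v_T|$ and exploit rotational invariance to reduce to a fixed cap $\{|u_1|\ge\varepsilon\}$ of $T$-independent positive measure $c$, then use only the monotonicity of $\varphi$ and the elementary fact that a convex strictly increasing $\varphi$ with $\varphi(0)=0$ has $\varphi(t)\to\infty$. That is shorter and more transparent than the paper's cube computation, and it is just as quantitative: it gives $\tilde V_\varphi(TK,B)\ge \frac{r_0^nc}{n}\varphi(r_0\varepsilon\|T\|)$.

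For the second limit the two arguments are again different in form but not in substance. The paper plugs the scaled body $O_\varphi(TK,B)^{-1}K$ into its inequality and uses Lemma \ref{lem 3.5} to produce the explicit linear bound $O_\varphi(TK,B)\ge C_1\|T\|$. You instead argue by contradiction: if $\lambda_{T_j}=O_\varphi(T_jK,B)\le M$ along a sequence with $\|T_j\|\to\infty$, then by the strict monotonicity in Lemma \ref{lem 2.1} and the characterizing identity $\tilde V_\varphi(T_jK,\lambda_{T_j}B)=\varphi(1)V(K)$ from Lemma \ref{lem 3.5}, you would have $\tilde V_\varphi(T_jK,MB)\le\varphi(1)V(K)$; rewriting $\tilde V_\varphi(T_jK,MB)=\tilde V_{\varphi_M}(T_jK,B)$ with $\varphi_M(t)=\varphi(t/M)\in\Phi$ and invoking the first limit then gives a contradiction. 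This is a correct and nicely self-contained device; if one wanted the explicit lower bound $O_\varphi(TK,B)\gtrsim\|T\|$ that the paper records, your cap estimate produces it too, since $\varphi(1)V(K)\ge\frac{r_0^nc}{n}\varphi\bigl(r_0\varepsilon\|T\|/\lambda_T\bigr)$ can be inverted directly.

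One stylistic note: the line ``by rotational invariance of $dS$'' should really be attached to the equality (replacing $v_T$ by $e_1$), while the inequality on its left uses $|Tu|\ge\|T\||u\cdot v_T|$ together with monotonicity of $\varphi$; the content is clear but the attribution as written sits in front of the wrong step.
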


\begin{proof}
Let $r_K=\min\limits_{S^{n-1}}\rho_K$. Then $r_KB\subseteq K$. In
addition, there exists a positive  $r>0$, say
$r=\frac{1}{\sqrt{n}}r_K$, such that $r[-1,1]^n\subseteq r_KB$. For
$T\in {\rm SL}(n)$, write $T$ in the form $ T=O_1AO_2$, where $A$ is
an $n\times n$ diagonal matrix, with $\det(A)=1$ and positive
diagonal elements $a_1,\cdots, a_n$, and $O_1, O_2$ are $n\times n$
orthogonal matrices.

From the definition of the measure $ {{\tilde V}_{A{O_2}K}} $, the
polar coordinate formula, and the fact $K\supseteq r_K B$, the
orthogonality of $O_2$, the fact
$r_KB\supseteq r[-1,1]^n$, and finally the symmetry of $Ax$ in $x$
and $[-1,1]^n$ with respect to $o$, we have
\begin{align*}
\int_{{S^{n - 1}}} {{\rho _{A{O_2}K}}d{{\tilde V}_{A{O_2}K}}}  &= \frac{1}{n}\int_{{S^{n - 1}}} {\rho _{A{O_2}K}^{n + 1}dS} \\ &= \frac{{n + 1}}{n}\int_K {|A{O_2}x|dx}
\\&\ge \frac{{n + 1}}{n}\int_{{r_K}B} {|A{O_2}x|dx} \\ &= \frac{{n + 1}}{n}\int_{{r_K}B} {|Ax|dx}  \\
&\ge \frac{{n + 1}}{n}\int_{r{{[ - 1,1]}^n}} {|Ax|dx} \\ &= \frac{{(n + 1)2^n{r^{n + 1}}}}{n}\int_{{{[0,1]}^n}} {|Ax|dx}.
\end{align*}

For any $y\in\mathbb R^n$, let $\|y\|_1$ denote the $l_1$ norm of
$y$. Recall that there exists a positive  $C$ such that $|y|\ge C\|y\|_1$, and
$ \sum\limits_{i=1}^n a_i\ge \max\limits_{1\le i\le n}a_i=\|A\|=\|T\|$. So, we have
\[\int_{{{[0,1]}^n}} {|Ax|dx}  \ge \int_{{{[0,1]}^n}} {C{{\left\| {Ax} \right\|}_1}dx}  \ge \frac{C}{2}\sum\limits_{i = 1}^n {{a_i}}  \ge \frac{C}{2}\left\| T \right\|.\]

Thus, we obtain
\begin{equation}\label{existence ineq}
\int_{{S^{n - 1}}} {{\rho _{A{O_2}K}}d{{\tilde V}_{A{O_2}K}}} \ge \frac{{(n + 1){2^{n - 1}}{r^{n + 1}}C}}{n}\left\| T \right\|.
\end{equation}

Now, from Definition \ref{def 3.1} together with Lemma \ref{lem 3.2} (1), the convexity of $\varphi$
together with Jensen's inequality, the strict
monotonicity of $\varphi$  together with (\ref{existence ineq}), and
the fact $V(AO_2 K)=V(K)$, we have
\begin{align*}
\frac{{{{\tilde V}_\varphi }(TK,B)}}{{V(TK)}}
&= \frac{1}{{V(A{O_2}K)}}\int_{{S^{n - 1}}} {\varphi \left( {{\rho _{A{O_2}K}}} \right)d{{\tilde V}_{A{O_2}K}}}\\
&\ge \varphi \left( {\frac{1}{{V(AO_2K)}}\int_{{S^{n - 1}}} {{\rho _{A{O_2}K}}d{{\tilde V}_{A{O_2}K}}} } \right)\\
&\ge \varphi \left( \frac{{(n + 1){2^{n - 1}}{r^{n + 1}}C}}{nV(K)}\left\| T \right\|  \right).
\end{align*}
That is,
\begin{equation}\label{crucial ineq}
\frac{{{{\tilde V}_\varphi }(TK,B)}}{{V(K)}} \ge \varphi \left( {\frac{{{2^{n-1}}{r^{n + 1}}(n + 1)C}}{{nV(K)}}\left\| T \right\|} \right).
\end{equation}
By the strict monotonicity of $\varphi$ again, it immediately yields
\[\mathop {\lim }\limits_{\scriptstyle T \in {\rm SL}(n) \atop \scriptstyle \left\| T \right\| \to \infty } {{\tilde V}_\varphi }(TK,B) = \infty .\]

Let
\[{C_1} = \frac{{{2^{n-1}}{r^{n + 1}}(n + 1)C}}{{nV(K)}}.\]

Note that $r$ depends on $K$.  Applying (\ref{crucial ineq}) to the star body $O_\varphi(TK,B)^{-1}K$ and using Lemma \ref{lem 3.5}, we obtain
\[\varphi \left( 1 \right) = \frac{{{{\tilde V}_\varphi }({O_\varphi }{{(TK,B)}^{ - 1}}TK,B)}}{{V\left( {{O_\varphi }{{(TK,B)}^{ - 1}}K} \right)}} \ge \varphi \left( {{O_\varphi }{{(TK,B)}^{ - 1}}{C_1}\left\| T \right\|} \right).\]
So, from the injectivity of $\varphi$, it follows that
\[O_\varphi(TK,B) \ge {C_1}\left\| T \right\|,\]
which   immediately yields
\[\mathop {\lim }\limits_{\scriptstyle T \in {\rm{SL}}(n) \atop \scriptstyle \left\| T \right\| \to \infty } {O_\varphi }(TK,B) = \infty ,\]
as desired.
\end{proof}

From Lemmas \ref{lem 4.1}, \ref{lem 2.2}, \ref{lem 3.2} and \ref{lem 3.4}, we immediately obtain
\begin{lemma}\label{lem 4.2}
Suppose $T\in {\rm SL}(n)$. Then
\[\mathop {\lim }\limits_{\scriptstyle T \in {\rm{SL}}(n)  \atop \scriptstyle \left\| T \right\| \to \infty } {{\tilde V}_\varphi }(K,TB) = \infty,\]
and
\[\mathop {\lim }\limits_{\scriptstyle T \in {\rm{SL}}(n) \atop \scriptstyle \left\| T \right\| \to \infty } O_\varphi(K,TB) = \infty .\]
\end{lemma}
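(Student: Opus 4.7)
The plan is to reduce both limits to Lemma \ref{lem 4.1} by a substitution $S = T^{-1}$, using that the operations $T \mapsto TK$ and $T \mapsto TB$ can be swapped at the cost of an inversion, and that inversion preserves the condition $\|T\| \to \infty$ within ${\rm SL}(n)$.

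Concretely, given $T \in {\rm SL}(n)$, I would first apply Lemma \ref{lem 3.2}(1) with the factor $|T| = 1$ (since $T \in {\rm SL}(n)$) to write
\[
{\tilde V}_\varphi(T^{-1}K, B) = {\tilde V}_\varphi(K, TB),
\]
and likewise apply Lemma \ref{lem 3.4}(1), taking $T^{-1}$ in place of $T$, to obtain
\[
O_\varphi(T^{-1}K, B) = O_\varphi(K, TB).
\]
Then I would set $S = T^{-1} \in {\rm SL}(n)$, so the quantities on the right become ${\tilde V}_\varphi(SK, B) \circ$ type expressions indexed by $S$. By Lemma \ref{lem 2.2}, $\|T\| \to \infty$ in ${\rm SL}(n)$ is equivalent to $\|S\| = \|T^{-1}\| \to \infty$ in ${\rm SL}(n)$.

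Finally, I would invoke Lemma \ref{lem 4.1} applied to the sequence $S = T^{-1}$ in place of $T$: as $\|S\| \to \infty$,
\[
{\tilde V}_\varphi(SK, B) \to \infty \quad \text{and} \quad O_\varphi(SK, B) \to \infty.
\]
Combining with the identities above yields the two claimed limits. No step here is an obstacle; the whole argument is a one-line substitution once the four cited lemmas are in hand, and the only thing to be careful about is keeping track that we apply Lemma \ref{lem 3.2}(1) (resp.\ Lemma \ref{lem 3.4}(1)) with $T^{-1}$ rather than $T$ so the map sending the first argument of ${\tilde V}_\varphi$ (resp.\ $O_\varphi$) to its second is realized correctly.
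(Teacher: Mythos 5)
Your proof is correct and matches the paper's intended argument exactly: the paper itself says Lemma \ref{lem 4.2} follows "immediately" from Lemmas \ref{lem 4.1}, \ref{lem 2.2}, \ref{lem 3.2} and \ref{lem 3.4}, and your substitution $S=T^{-1}$ together with the ${\rm GL}(n)$-covariance identities and the equivalence $\|T\|\to\infty\Leftrightarrow\|T^{-1}\|\to\infty$ is precisely the reduction meant.
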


Now, using Lemmas \ref{lem 4.2} and \ref{lem 2.3}, we can prove the existence of solution to problem ${\rm P}_1$.

\begin{theorem}\label{thm 4.3}
Suppose $K\in\mathcal{S}^n_o$ and $\varphi\in\Phi$. Then there
exists an solution to ${\rm P}_1$.
\end{theorem}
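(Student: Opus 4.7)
The plan is to apply the direct method of the calculus of variations, reducing Problem ${\rm P}_1$ to a coercive minimization over ${\rm SL}(n)$ and then extracting a limit from a minimizing sequence. First, I will eliminate the inequality in the constraint. Since $\rho_{\lambda E}=\lambda\rho_E$, the function $\lambda\mapsto\rho_K/\rho_{\lambda E}$ is strictly decreasing, so by the monotonicity of $\varphi$ and $\varphi^{-1}$, the map $\lambda\mapsto\bar{\tilde V}_\varphi(K,\lambda E)$ is strictly decreasing as well. Therefore any minimizer must saturate $V(E)=\omega_n$, and it suffices to study
\[f(T):=\bar{\tilde V}_\varphi(K,TB),\qquad T\in{\rm SL}(n),\]
and exhibit some $T_0\in{\rm SL}(n)$ with $f(T_0)=m:=\inf_{T\in{\rm SL}(n)}f(T)$. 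By Lemma \ref{lem 3.6}, $m\ge(V(K)/\omega_n)^{1/n}>0$, so the problem is well posed.

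Second, I will choose a minimizing sequence $\{T_j\}\subset{\rm SL}(n)$ with $f(T_j)\to m$ and show that $\{T_j\}$ is bounded in $(\mathscr{L}^n,d_n)$. This is precisely what Lemma \ref{lem 4.2} delivers: it forces $\tilde V_\varphi(K,TB)\to\infty$, and hence $\bar{\tilde V}_\varphi(K,TB)\to\infty$ via the strictly increasing $\varphi^{-1}$, whenever $T\in{\rm SL}(n)$ and $\|T\|\to\infty$. Consequently $\sup_j\|T_j\|<\infty$, for otherwise $f(T_j)$ could not stay close to the finite number $m$.

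Third, since $\mathscr{L}^n$ is finite-dimensional, the bounded sequence $\{T_j\}$ has a subsequence (which I still denote $\{T_j\}$) converging in $d_n$ to some $T_0\in\mathscr{L}^n$. Continuity of the determinant gives $T_0\in{\rm SL}(n)$, so $T_0B\in\mathcal{E}^n$ with $V(T_0B)=\omega_n$. By Lemma \ref{lem 2.3}(3), $T_jB\to T_0B$ in the dual Hausdorff metric $\tilde\delta_H$, and Lemma \ref{lem 3.8} then yields $f(T_j)\to f(T_0)$. Hence $f(T_0)=m$ and $E_0:=T_0B$ solves ${\rm P}_1$.

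The only nontrivial ingredient is the coercivity of Step 2, and that has already been established in Lemma \ref{lem 4.2}; once it is in hand, the remainder is a standard compactness argument in the finite-dimensional metric space $(\mathscr{L}^n,d_n)$, together with the joint continuity of $\bar{\tilde V}_\varphi$ proved in Lemma \ref{lem 3.8}.
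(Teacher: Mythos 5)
Your proof is correct and uses precisely the same ingredients as the paper: reduction of the constraint to $V(E)=\omega_n$ by strict monotonicity under scaling, coercivity from Lemma~\ref{lem 4.2}, and joint continuity from Lemma~\ref{lem 3.8} combined with Lemma~\ref{lem 2.3}(3). The only cosmetic difference is that you run a minimizing-sequence argument in ${\rm SL}(n)$, whereas the paper establishes the compactness of the sublevel set $\left\{ T\in{\rm SL}(n): \tilde V_\varphi(K,TB)\le \tilde V_\varphi(K,B)\right\}$ and minimizes over it; these are equivalent implementations of the direct method.
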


\begin{proof}
First, we prove that any $E\in\mathcal{E}^n$ with $V(E)<\omega_n$
cannot be a solution to  ${\rm P}_1$.

Indeed, let $\lambda_0=\left(\omega_n/V(E)\right)^{1/n}$, then $\lambda_0 E$
also satisfies the constraint condition in  ${\rm P}_1$. From
the fact that $\varphi$ is strictly increasing on $[0,\infty)$
together with Definition \ref{def 3.1}, it  necessarily results in that
${\tilde V}_\varphi(K,\lambda_0 E)<{\tilde V}_\varphi(K,E)$.

Hence, Problem ${\rm P}_1$ can be equivalently restated as
\[\inf \left\{ {{{\tilde V}_\varphi }(K,TB):T \in {\rm{SL}}(n)} \right\}.\]

Observe that the infimum exists, since
\[V(K)\varphi \left( {{{\left( {\frac{{V(K)}}{{{\omega _n}}}} \right)}^{\frac{1}{n}}}} \right) \le \inf \left\{ {{{\tilde V}_\varphi }(K,TB):T \in {\rm{SL}}(n)} \right\} \le {\tilde V_\varphi }(K,B) < \infty, \]
where the left inequality follows from Lemma \ref{lem 3.6} and
Definition \ref{def 3.1}.

Let
\[{\mathcal T} = \left\{ {T \in {\rm{SL}}(n):{{\tilde V}_\varphi }(K,TB) \le {{\tilde V}_\varphi }(K,B)} \right\}.\]
From Lemma \ref{lem 2.2} (3) and Lemma \ref{lem 3.8}, ${\tilde V}_\varphi(K,TB)$ is continuous in $T\in ({\rm SL}(n),d_n)$. Thus,
the set ${\mathcal T}$ is closed in $({\rm SL}(n),d_n)$. Meanwhile,
the definition of ${\mathcal T}$ and Lemma \ref{lem 4.2} guarantee
that ${\mathcal T}$ is bounded in $ ({\rm SL}(n),d_n)$. Hence,
$\mathcal T$ is compact.

Now, since ${\tilde V}_\varphi(K,TB)$ is continuous on $({\mathcal T},d_n)$, it
concludes that there exists a $T_0\in {\mathcal T}$ such that
\[{\tilde V}_\varphi (K,T_0B)=\min\{{\tilde V}_\varphi (K,TB): T\in {\mathcal T}\}= \inf\{{\tilde V}_\varphi (K,TB): T\in {\rm SL}(n)\},\]
which completes the proof.
\end{proof}

\begin{theorem}\label{thm 4.4}
Suppose $K\in\mathcal{S}^n_o$ and $\varphi\in\Phi$. Then, modulo
orthogonal transformations, there exists a unique ${\rm SL}(n)$
transformation solving  the extremal problem
\[\min \left\{ {{{\tilde V}_\varphi }(K,TB):T \in {\rm{SL}}(n)} \right\}.\]
Equivalently, there exists a unique solution to Problem ${\rm P}_1$.
\end{theorem}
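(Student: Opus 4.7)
The plan is to recast the problem in terms of positive definite matrices of determinant one, and then invoke Minkowski's determinant inequality together with the triangle inequality to rule out non-equivalent minimizers. Writing the polar decomposition $T=PU$ with $P=(TT^t)^{1/2}$ positive definite and $U\in O(n)$, one has $TB=P(UB)=PB$, so the minimum may be sought among positive definite $T\in{\rm SL}(n)$. Setting $S=T^{-1}$, again positive definite with $\det S=1$, the identity $\rho_{TB}(u)=1/|T^{-1}u|=1/|Su|$ turns the functional into
\[F(S):=\tilde V_\varphi(K,TB)=\int_{S^{n-1}}\varphi\bigl(\rho_K(u)\,|Su|\bigr)\,d\tilde V_K(u),\]
and the claim reduces to showing that the minimizing $S$ is unique.

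Suppose, toward a contradiction, that positive definite $S_0\neq S_1$ with $\det S_0=\det S_1=1$ both achieve $F=m$. For $t\in(0,1)$ set $S_t=(1-t)S_0+tS_1$, still positive definite. Minkowski's determinant inequality gives $(\det S_t)^{1/n}\geq 1$, and its equality case combined with the unit-determinant constraint forces $S_0=S_1$; hence $\det S_t>1$. The triangle inequality produces $|S_tu|\leq (1-t)|S_0u|+t|S_1u|$, with equality precisely when $S_0u$ and $S_1u$ are positively proportional, i.e., when $u$ is an eigenvector of $S_1^{-1}S_0$. Because $S_0\neq S_1$ share the same determinant, $S_1^{-1}S_0$ is not a scalar multiple of the identity, so its (real, positive) eigenspaces are proper subspaces; their union meets $S^{n-1}$ in a set of zero spherical-Lebesgue measure, hence of zero $\tilde V_K$-measure since $d\tilde V_K=\rho_K^n\,dS/n$ with $\rho_K>0$.

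Chaining the strict triangle inequality (valid $\tilde V_K$-a.e.) with the strict monotonicity and convexity of $\varphi$ yields pointwise
\[\varphi(\rho_K|S_tu|)\leq \varphi\bigl(\rho_K[(1-t)|S_0u|+t|S_1u|]\bigr)\leq (1-t)\varphi(\rho_K|S_0u|)+t\varphi(\rho_K|S_1u|),\]
with the first $\leq$ strict on a set of full $\tilde V_K$-measure. Integrating gives $F(S_t)<(1-t)F(S_0)+tF(S_1)=m$. The ellipsoid $E_t:=S_t^{-1}B$ has $V(E_t)=\omega_n/\det S_t<\omega_n$, so it is feasible for Problem ${\rm P}_1$, and $\tilde V_\varphi(K,E_t)=F(S_t)<m$ contradicts minimality. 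Hence $S_0=S_1$, yielding uniqueness of $T$ modulo right multiplication by $O(n)$, equivalently the unique solvability of ${\rm P}_1$. I expect the main technical care to lie in the two equality analyses---verifying that the exceptional eigenvector set is genuinely $\tilde V_K$-negligible, and that Minkowski equality under unit determinants collapses from $S_0\propto S_1$ to $S_0=S_1$---after which the rest of the argument assembles routinely.
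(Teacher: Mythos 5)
Your proof is correct and essentially the same as the paper's: both reduce to symmetric positive definite matrices via polar decomposition, take a convex combination of the inverses, invoke Minkowski's determinant inequality, and use the convexity and strict monotonicity of $\varphi$ to contradict minimality. The only minor divergence is in how strictness is extracted: the paper rescales $\tfrac{1}{2}(T_1^{-1}+T_2^{-1})$ back to determinant one and lets the resulting factor $<1$ supply a pointwise strict inequality (so the triangle inequality is used only non-strictly), whereas you keep the unnormalized $S_t$ and instead argue that the triangle-inequality equality set (eigenvectors of $S_1^{-1}S_0$) is $\tilde V_K$-null while also noting $\det S_t>1$ gives a feasible ellipsoid of volume $<\omega_n$ --- correct, and in fact doubly sufficient, but a bit more work than the paper's normalization trick.
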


\begin{proof}
The existence is shown by Theorem \ref{thm 4.3}.  We only need to prove the
uniqueness. For this aim, we argue by contradiction.

Assume that $T_1, T_2\in {\rm SL}(n)$ both solve the considered
minimization problem. Let $E_1=T_1B$, $E_2=T_2B$. It is known that each $T\in {\rm SL}(n)$ can be represented
in the form $T=PQ$, where $P$ is symmetric, positive definite and
$Q$ is orthogonal. So, w.l.o.g., we may assume
that $T_1, T_2$ are symmetric and positive definite.

By the Minkowski inequality for symmetric and positive definite matrices, we have
\[\det\left( \frac{T_1^{-1}+T_2^{-1}}{2} \right)^{\frac{1}{n}} > \frac{1}{2}\det (T_1^{-1})^{\frac{1}{n}} + \frac{1}{2}\det (T_2^{-1})^{\frac{1}{n}}=1. \]

Let
\[ T_3^{-1}= \det\left( \frac{T_1^{-1}+T_2^{-1}}{2} \right)^{-\frac{1}{n}} \frac{T_1^{-1}+T_2^{-1}}{2}.\]
Then $T_3\in {\rm SL}(n)$ is symmetric.

Let $E_3=T_3B$.  For all $u\in S^{n-1}$, we have
\begin{align*}
& h_{E_3^*}(u) =h_{T_3^{-1}B}(u)\\
& < h_{\frac{T_1^{-1}+T_2^{-1}}{2} B}(u)= \left|\frac{T_1^{-1}u+T_2^{-1}u}{2}\right|\\
& \le \frac{|T_1^{-1}u|+|T_2^{-1}u|}{2}= \frac{1}{2} h_{T_1^{-1}B} + \frac{1}{2} h_{T_1^{-1}B}.
\end{align*}

Since $E_i^*=T_i^{-1}B$, $i=1,2,3$, it follows that
\[{\tilde V}_\varphi(K,E_i)=\int_{S^{n-1}}{ \varphi\left( \rho_K h_{T_i^{-1}B}\right) d{\tilde V}_K  }.\]
From the fact that  $\varphi$ is strictly increasing and convex in $[0,\infty)$, we have
\[\varphi\left( \rho_K h_{T_3^{-1}B} \right) < \frac{1}{2} \varphi\left( \rho_K h_{T_1^{-1}B} \right) + \frac{1}{2} \varphi\left( \rho_K h_{T_2^{-1}B} \right). \]
Thus,
\[{\tilde V}_\varphi(K,E_3)<\frac{1}{2}{\tilde V}_\varphi(K,E_1)+\frac{1}{2}{\tilde V}_\varphi(K,E_2). \]
Hence,
\[ {\tilde V}_\varphi(K,E_3)< {\tilde V}_\varphi(K,E_1)={\tilde V}_\varphi(K,E_2).\]

However, from $T_3\in {\rm SL}(n)$ and the assumption on $E_1$ and $E_2$,  we also have
\[ {\tilde V}_\varphi(K,E_3) \ge {\tilde V}_\varphi(K,E_1)={\tilde V}_\varphi(K,E_2),\]
which contradicts the above. This completes the proof.
\end{proof}

\begin{lemma}\label{lem 4.5}
Suppose $E_0\in\mathcal{E}^n$ and $V(E_0)=\omega_n$. Then, for any
$T\in {\rm SL}(n)$,
\[{{\tilde V}_\varphi }\left( {K,{O_\varphi }(K,{E_0}){E_0}} \right) \le {{\tilde V}_\varphi }\left( {K,{O_\varphi }(K,{E_0})T{E_0}} \right)\]
if and only if
\[{O_\varphi }(K,{E_0}) \le {O_\varphi }(K,T{E_0}).\]
\end{lemma}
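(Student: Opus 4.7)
The plan is to reduce both sides of the claimed equivalence to a comparison between two scalar parameters by using the defining identity of $O_\varphi$ supplied by Lemma \ref{lem 3.5}. Set
\[
\lambda_0 := O_\varphi(K,E_0) \quad \text{and} \quad \mu_0 := O_\varphi(K,TE_0).
\]
By Lemma \ref{lem 3.5}, $\bar{\tilde V}_\varphi(K,\lambda_0 E_0)=1$ and $\bar{\tilde V}_\varphi(K,\mu_0 TE_0)=1$. Unwinding the normalization in Definition \ref{def 3.1}, both identities translate to
\[
{\tilde V}_\varphi(K,\lambda_0 E_0)=\varphi(1)V(K)={\tilde V}_\varphi(K,\mu_0 TE_0).
\]

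The key observation is that the function $\lambda\mapsto {\tilde V}_\varphi(K,\lambda T E_0)=\int_{S^{n-1}}\varphi(\rho_K/(\lambda\rho_{TE_0}))\,d\tilde V_K$ is strictly decreasing on $(0,\infty)$. This is a direct consequence of Lemma \ref{lem 2.1} (1) applied to $f=\rho_K/\rho_{TE_0}$ and the measure $\tilde V_K$; the function $\rho_K/\rho_{TE_0}$ is continuous and strictly positive on $S^{n-1}$, so the hypothesis of that lemma is satisfied.

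Using the two displayed equalities above, the left-hand inequality
${\tilde V}_\varphi(K,\lambda_0 E_0)\le {\tilde V}_\varphi(K,\lambda_0 T E_0)$
is equivalent to
${\tilde V}_\varphi(K,\mu_0 T E_0)\le {\tilde V}_\varphi(K,\lambda_0 T E_0)$, and by the strict monotonicity just noted this is in turn equivalent to $\mu_0\ge \lambda_0$, i.e.\ to $O_\varphi(K,TE_0)\ge O_\varphi(K,E_0)$. This gives the asserted equivalence in both directions at once.

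I do not expect a genuine obstacle: the argument is essentially a two-line substitution once Lemma \ref{lem 3.5} is available, and the hypothesis $V(E_0)=\omega_n$ is not even needed for this step. The only point requiring a brief justification is the strict monotonicity of $\lambda\mapsto {\tilde V}_\varphi(K,\lambda T E_0)$, which follows immediately from Lemma \ref{lem 2.1} since $\rho_K/\rho_{TE_0}$ is a strictly positive continuous function on $S^{n-1}$ and $\tilde V_K$ is a non-zero positive measure.
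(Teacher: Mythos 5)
Your proof is correct and follows essentially the same route as the paper: you invoke Lemma \ref{lem 3.5} to equate $\tilde V_\varphi(K,\lambda_0 E_0)$ with $\tilde V_\varphi(K,\mu_0 TE_0)$, and then use the strict monotonicity of $\lambda\mapsto\tilde V_\varphi(K,\lambda TE_0)$ from Lemma \ref{lem 2.1}. The paper presents this as a longer chain of equivalences passing back and forth through $\bar{\tilde V}_\varphi$, but the underlying ingredients are identical; your observation that $V(E_0)=\omega_n$ is not used here is also accurate.
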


\begin{proof}
From Definition \ref{def 3.1} together with the strict monotonicity of $\varphi^{-1}$, Lemma \ref{lem 3.5},
and  Lemma \ref{lem 2.1} together with Definition \ref{def 3.3}, it follows that
\begin{align*}
&{{\tilde V}_\varphi }\left( {K,{O_\varphi }(K,{E_0}){E_0}} \right) \le {{\tilde V}_\varphi }\left( {K,{O_\varphi }(K,{E_0})T{E_0}} \right) \\
&\quad\quad \Longleftrightarrow\quad {{\bar{\tilde V}}_\varphi }\left( {K,{O_\varphi }(K,{E_0}){E_0}} \right) \le {{\bar{\tilde V}}_\varphi }\left( {K,{O_\varphi }(K,{E_0})T{E_0}} \right)\\
&\quad\quad \Longleftrightarrow\quad 1 \le {{\bar{\tilde V}}_\varphi }\left( {K,{O_\varphi }(K,{E_0})T{E_0}} \right)\\
&\quad\quad \Longleftrightarrow\quad {{\bar{\tilde V}}_\varphi }\left( {K,{O_\varphi }(K,{TE_0}){TE_0}} \right) \le {{\bar{\tilde V}}_\varphi }\left( {K,{O_\varphi }(K,{E_0})T{E_0}} \right) \\
&\quad\quad \Longleftrightarrow\quad { {\tilde V}_\varphi }\left( {K,{O_\varphi }(K,{TE_0}){TE_0}} \right) \le { {\tilde V}_\varphi }\left( {K,{O_\varphi }(K,{E_0})T{E_0}} \right) \\
&\quad\quad \Longleftrightarrow\quad {O_\varphi }(K,{E_0}) \le {O_\varphi }(K,T{E_0}),
\end{align*}
as desired.
\end{proof}

From Theorem \ref{thm 4.4} and Lemma \ref{lem 4.5}, we can prove the following.
\begin{theorem}\label{thm 4.6}
Suppose $K\in \mathcal{S}^n_o$ and $\varphi\in\Phi$. Then there
exists a unique solution to Problem ${\rm P}_2$.
\end{theorem}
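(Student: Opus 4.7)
The plan is to mirror the structure of Theorems~\ref{thm 4.3} and~\ref{thm 4.4}, now applied to the functional $O_\varphi(K,\cdot)$, with Lemma~\ref{lem 4.5} serving as the bridge between the $O_\varphi$-world and the $\tilde V_\varphi$-world where Theorem~\ref{thm 4.4} already supplies uniqueness. First I would reduce Problem~${\rm P}_2$ to a minimisation over ${\rm SL}(n)$: Lemma~\ref{lem 3.4}(2) gives $O_\varphi(K,cTB)=c^{-1}O_\varphi(K,TB)\ge O_\varphi(K,TB)$ for $c\in(0,1]$ and $T\in{\rm SL}(n)$, so any solution must have $V(E)=\omega_n$, and the problem becomes $\min\{O_\varphi(K,TB):T\in{\rm SL}(n)\}$.

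For existence I would run the compactness argument of Theorem~\ref{thm 4.3}: by Lemma~\ref{lem 2.3}(3) together with Lemma~\ref{lem 3.8}, the map $T\mapsto O_\varphi(K,TB)$ is continuous on $({\rm SL}(n),d_n)$, and by Lemma~\ref{lem 4.2} it tends to $\infty$ as $\|T\|\to\infty$. Hence the sublevel set $\{T\in{\rm SL}(n):O_\varphi(K,TB)\le O_\varphi(K,B)\}$ is closed and bounded in $({\rm SL}(n),d_n)$, therefore compact, and the continuous functional attains its infimum on it, producing an ellipsoid $E_0=T_0B$ that solves~${\rm P}_2$.

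For uniqueness, suppose $E_1,E_2\in\mathcal E^n$ both solve~${\rm P}_2$ with $V(E_1)=V(E_2)=\omega_n$, and let $\lambda_\ast=O_\varphi(K,E_1)=O_\varphi(K,E_2)$ be the common minimum value. Because $E_1$ is a minimiser, $O_\varphi(K,E_1)\le O_\varphi(K,TE_1)$ for every $T\in{\rm SL}(n)$. Lemma~\ref{lem 4.5}, applied with $E_0=E_1$, converts this into $\tilde V_\varphi(K,\lambda_\ast E_1)\le \tilde V_\varphi(K,\lambda_\ast TE_1)$ for every $T\in{\rm SL}(n)$, i.e., $\lambda_\ast E_1$ minimises $\tilde V_\varphi(K,\cdot)$ over the ${\rm SL}(n)$-orbit $\{\lambda_\ast TE_1:T\in{\rm SL}(n)\}$, which coincides with the family of origin-symmetric ellipsoids of volume $\lambda_\ast^n\omega_n$. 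The same reasoning shows that $\lambda_\ast E_2$ is a minimiser in this class too.

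I would close the argument by invoking Theorem~\ref{thm 4.4} after a benign change of variable: writing $F=\lambda_\ast G$ identifies the minimisation of $\tilde V_\varphi(K,F)$ over $V(F)=\lambda_\ast^n\omega_n$ with the minimisation of $\tilde V_{\tilde\varphi}(K,G)$ over $V(G)=\omega_n$, where $\tilde\varphi(t):=\varphi(t/\lambda_\ast)$ still lies in $\Phi$. Applying Theorem~\ref{thm 4.4} with $\tilde\varphi$ in place of $\varphi$ yields a unique minimiser, whence $\lambda_\ast E_1=\lambda_\ast E_2$ and therefore $E_1=E_2$. The main obstacle is precisely this uniqueness step: one must recognise that Lemma~\ref{lem 4.5} promotes the $O_\varphi$-optimality of $E_1$ into a $\tilde V_\varphi$-optimality of the rescaled ellipsoid $\lambda_\ast E_1$ within a volume class that itself depends on $\lambda_\ast$, and then observe that Theorem~\ref{thm 4.4} survives the rescaling $\varphi\mapsto\tilde\varphi$ without change.
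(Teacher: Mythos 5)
Your proof is correct and follows the same skeleton as the paper's: reduce ${\rm P}_2$ to a minimisation over ${\rm SL}(n)$, obtain existence via the compactness argument of Theorem~\ref{thm 4.3} (continuity from Lemmas~\ref{lem 2.3} and~\ref{lem 3.8}, coercivity from Lemma~\ref{lem 4.2}), and obtain uniqueness by passing through Lemma~\ref{lem 4.5} to the $\tilde V_\varphi$ setting and invoking Theorem~\ref{thm 4.4}. The only difference is the renormalisation in the final step: the paper observes, via Lemma~\ref{lem 3.2}(1), that $\tilde V_\varphi(K,\lambda_0 E_0)\le\tilde V_\varphi(K,\lambda_0 TE_0)$ is equivalent to $E_0$ solving ${\rm P}_1$ for the \emph{rescaled body} $\lambda_0^{-1}K$, whereas you absorb the scalar into a \emph{rescaled function} $\tilde\varphi(t)=\varphi(t/\lambda_\ast)$ and apply Theorem~\ref{thm 4.4} with $\tilde\varphi$ in place of $\varphi$. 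Both substitutions are legitimate (indeed $\tilde V_\varphi(K,\lambda_\ast G)=\tilde V_{\tilde\varphi}(K,G)$ and $\tilde\varphi\in\Phi$), and they exploit the same degree of homogeneity, so the two arguments are interchangeable; the paper's choice keeps the Orlicz function fixed and lets Lemma~\ref{lem 3.2} do the bookkeeping, which is marginally tidier since that lemma has already been proved, but your version is equally rigorous.
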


\begin{proof}
First, we prove the existence of solution to problem ${\rm P}_2$.
Observe that  the constraint condition in ${\rm P}_2$ can be turned into  $V(E)=\omega_n$.
Indeed, for any $s\in (0,1)$ and  $E\in \mathcal E^n$ with $V(E)=\omega_n$,
by Lemma \ref{lem 3.4} it follows that
\[{O_\varphi }(K,sE) = {s^{ - 1}}{O_\varphi }(K,E) > {O_\varphi }(K,E),\]
which indicates that $sE$ cannot be a solution to ${\rm P}_2$.

Let ${\lambda _0} = \inf \left\{ {{O_\varphi }(K,TB):T \in {\rm{SL}}(n)} \right\}$.
From Lemma {\ref{lem 3.6}}, we have
\[0 < {\left( {\frac{{V(K)}}{{{\omega _n}}}} \right)^{\frac{1}{n}}} \le {\lambda _0} \le O_\varphi(K,B) < \infty .\]
Similar to the proof of Theorem \ref{thm 4.3}, we can show the set \[\{T\in {\rm SL}(n): O_\varphi(K,TB)\le O_\varphi(K,B)\}\]
is also compact. Combining it with the continuity of $O_\varphi(K,TB)$,   the existence of
solution to ${\rm P}_2$ is demonstrated.

Now, we proceed to prove the uniqueness.

Assume ellipsoid $E_0$ is a solution to ${\rm P}_2$. Then
\[O_\varphi(K,E_0)\le O_\varphi(K,T E_0),\quad {\rm for}\; T\in {\rm SL}(n). \]
By Lemma \ref{lem 4.5}, it follows that
\[{{\tilde V}_\varphi }\left( {K,O_\varphi(K,E_0){E_0}} \right) \le {{\tilde V}_\varphi }\left( {K,O_\varphi(K,E_0)T{E_0}} \right),\quad {\rm for}\; T\in {\rm SL}(n).\]
Thus,  $E_0$ is a solution to Problem ${\rm P}_1$ for star body
$\lambda_0^{-1}K$. Hence, by Theorem \ref{thm 4.4}, the solution to ${\rm P}_2$ is unique.
\end{proof}

\begin{theorem}\label{thm 4.7}
Suppose $K\in \mathcal{S}^n_o$ and $\varphi\in\Phi$. Then

\noindent (1) If  $E_0$  is the unique solution to Problem ${\rm P}_2$, then  $O_\varphi(K,E_0)E_0$ is a solution to Problem ${\rm P}_3$.

\noindent (2) If $E_1$ is a solution to Problem ${\rm P}_3$,  then $\left(\frac{\omega_n}{V(E_1)}\right)^{\frac{1}{n}}E_1$ is a solution to Problem ${\rm P}_2$.

Consequently, there exists a unique solution to Problem ${\rm P}_3$.
\end{theorem}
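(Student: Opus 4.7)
The plan is to exploit the fact that both Problems ${\rm P}_2$ and ${\rm P}_3$ are essentially the same extremal problem: each minimizes the scale-invariant quantity $O_\varphi(K,E)^nV(E)$ over origin-symmetric ellipsoids, just subject to different normalizations. The scaling identity $O_\varphi(K,\mu E)=\mu^{-1}O_\varphi(K,E)$ from Lemma \ref{lem 3.4}(2) will be used repeatedly to move between the two normalizations, and Theorem \ref{thm 4.6} will be invoked to transfer uniqueness from ${\rm P}_2$ to ${\rm P}_3$.

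For part (1), I will set $\tilde E_0:=O_\varphi(K,E_0)E_0$. The scaling identity immediately yields $O_\varphi(K,\tilde E_0)=1$, so $\tilde E_0$ is ${\rm P}_3$-feasible. To establish optimality, take an arbitrary ${\rm P}_3$-feasible ellipsoid $E$ (so $O_\varphi(K,E)\le 1$) and rescale it to volume $\omega_n$ via $\bar E:=(\omega_n/V(E))^{1/n}E$. Since $V(E_0)=\omega_n$ and $E_0$ solves ${\rm P}_2$, together with the scaling identity this yields
\[
O_\varphi(K,E_0)\le O_\varphi(K,\bar E)=\left(\frac{V(E)}{\omega_n}\right)^{1/n}O_\varphi(K,E)\le \left(\frac{V(E)}{\omega_n}\right)^{1/n},
\]
and raising to the $n$-th power gives $V(\tilde E_0)=O_\varphi(K,E_0)^n\omega_n\le V(E)$, as required.

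For part (2), I will first observe that any ${\rm P}_3$-solution $E_1$ must satisfy $O_\varphi(K,E_1)=1$: if this quantity were strictly less than $1$, shrinking $E_1$ by the factor $O_\varphi(K,E_1)<1$ would yield a strictly smaller, still-feasible ellipsoid, contradicting ${\rm P}_3$-optimality. Setting $\tilde E_1:=(\omega_n/V(E_1))^{1/n}E_1$ then gives a ${\rm P}_2$-feasible ellipsoid with $O_\varphi(K,\tilde E_1)=(V(E_1)/\omega_n)^{1/n}$. To check its ${\rm P}_2$-optimality, for any $E$ with $V(E)=\omega_n$ rescale to $O_\varphi(K,E)E$, which has $O_\varphi(K,\cdot)=1$ and volume $O_\varphi(K,E)^n\omega_n$; the ${\rm P}_3$-optimality of $E_1$ forces $V(E_1)\le O_\varphi(K,E)^n\omega_n$, and taking $n$-th roots yields $O_\varphi(K,\tilde E_1)\le O_\varphi(K,E)$.

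Uniqueness for ${\rm P}_3$ then drops out: any ${\rm P}_3$-solution $E_1$ produces, via part (2) and Theorem \ref{thm 4.6}, the identity $\tilde E_1=E_0$, so $E_1$ is forced to be a dilate of $E_0$. Since $E_1$ and the ellipsoid $\tilde E_0$ built in part (1) are both ${\rm P}_3$-minimizers they share the same volume, which pins down the dilation factor to be $O_\varphi(K,E_0)$ and gives $E_1=\tilde E_0$. No genuine obstacle is anticipated; the only care required is bookkeeping of the scaling conventions from Lemma \ref{lem 3.4}, which is routine.
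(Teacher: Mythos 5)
Your proof is correct and follows essentially the same route as the paper's: both halves hinge on the homogeneity $O_\varphi(K,\mu E)=\mu^{-1}O_\varphi(K,E)$ to pass between the two normalizations, and your auxiliary observation in part (2) that a ${\rm P}_3$-solution must satisfy $O_\varphi(K,E_1)=1$ is exactly the reduction the paper makes. The only substantive difference is that the paper states the uniqueness of the ${\rm P}_3$-solution as an immediate consequence without writing out the argument, whereas you supply the (short and correct) chain: part (2) plus the uniqueness of the ${\rm P}_2$-solution (Theorem \ref{thm 4.6}) forces any ${\rm P}_3$-minimizer to be a dilate of $E_0$, and equality of the minimal volumes pins down the dilation factor. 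That extra detail is a mild improvement in exposition, not a different method.
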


\begin{proof}
(1) Let $E\in\mathcal{E}^n$ with $O_\varphi(K,E)\le 1$. Since $\left(\frac{\omega_n}{V(E)}\right)^{\frac{1}{n}} E$
satisfies the constraint condition of ${\rm P}_2$, by Lemma \ref{lem 3.4} (2), the fact $V(E_0)=\omega_n$, and the assumption
$O_\varphi(K,E)\le 1$, we have
\begin{align*}
V\left( {{O_\varphi }(K,{E_0}){E_0}} \right) &= {O_\varphi }{(K,{E_0})^n}V({E_0}) \\
&\le {O_\varphi }{\left( {K,{{\left( {\frac{{{\omega _n}}}{{V(E)}}} \right)}^{\frac{1}{n}}}E} \right)^n}V({E_0}) \\
&= \frac{{V(E)}}{{{\omega _n}}}{O_\varphi }{\left( {K,E} \right)^n}V({E_0}) \\
&= V(E){O_\varphi }{\left( {K,E} \right)^n} \\
&\le V(E).
\end{align*}
Thus,
\[V\left( {{O_\varphi }(K,{E_0}){E_0}} \right) \le V(E);\quad {\rm i.e.},\quad \frac{{{\omega _n}}}{V\left( {{O_\varphi
}(K,{E_0}){E_0}} \right) } \ge \frac{{{\omega _n}}}{{V\left( E \right)}},\]
which shows that $O_\varphi(K,E_0)E_0$ solves Problem ${\rm P}_3$.

(2) First, we prove that the constraint condition in ${\rm P}_3$ can
be turned  into $O_\varphi(K,E)=1$; i.e., a solution $E_1$ to ${\rm P}_3$ must satisfies $ O_\varphi(K,E_1)=1$.

Indeed, let  $E\in\mathcal{E}^n$ with $O_\varphi(K,E)<1$. By Lemma \ref{lem 3.4} (2), $O_\varphi(K,O_\varphi(K,E)E)=1$. Since
\[\frac{{{\omega _n}}}{{V\left( {{O_\varphi }(K,E)E} \right)}} = \frac{{{\omega _n}}}{{{O_\varphi }{{(K,E)}^n}V\left( E \right)}} > \frac{{{\omega _n}}}{{V\left( E \right)}},\]
it implies that $E$ cannot be a solution to Problem ${\rm P}_3$.

Now, we can finish the  proof of (2).

Let $E'\in {\mathcal E}^n$ with $V(E')\le\omega_n$. By Lemma \ref{lem 3.4} (2), ${O_\varphi }\left( {K,{O_\varphi }(K,E')E'}\right) = 1$.
Thus  $O_\varphi(K,E')E'$ satisfies the constraint condition of Problem ${\rm P}_3$. Since $E_1$ is a solution to Problem ${\rm P}_3$,
it follows that
\[V\left( O_\varphi(K,E')E' \right) \ge V({E_1}).\]

So, by the assumption $V(E')\le\omega_n$, the fact $O_\varphi(K,E_1)= 1$ and Lemma \ref{lem 3.4} (2), we have
\begin{align*}
{O_\varphi }(K,E') &\ge {\left( {\frac{{V({E_1})}}{{V(E')}}} \right)^{\frac{1}{n}}}
\ge {\left( {\frac{{V({E_1})}}{{{\omega _n}}}} \right)^{\frac{1}{n}}} \\
&= {O_\varphi }(K,{E_1}){\left( {\frac{{V({E_1})}}{{{\omega _n}}}} \right)^{\frac{1}{n}}} \\
&= {O_\varphi }\left( {K,{{\left( {\frac{{{\omega _n}}}{{V({E_1})}}} \right)}^{\frac{1}{n}}}{E_1}} \right).
\end{align*}
Thus,
\[{O_\varphi }(K,E') \ge {O_\varphi }\left( {K,{{\left( {\frac{{{\omega _n}}}{{V({E_1})}}} \right)}^{\frac{1}{n}}}{E_1}} \right),\]
which shows that
$\left(\frac{\omega_n}{V(E_1)}\right)^{\frac{1}{n}}E_1$ solves
Problem ${\rm P}_2$.
\end{proof}

\begin{lemma}\label{lem 4.8}
Suppose $K\in {\mathcal S}^n_o$ and $\varphi\in\Phi$. Then

\noindent(1) $\mathop {\min }\limits_{\left\{ {E \in {{\mathcal E}^n}:  O_\varphi(K,E) \le 1} \right\}} V(E) = \mathop {\min }\limits_{\left\{ {E \in {{\mathcal E}^n}:O_\varphi(K,E) = 1} \right\}} V(E)$.

\noindent (2) $\left\{  E \in {{\mathcal E}^n}: O_\varphi(K,E) = 1 \right\} = \left\{  E \in {{\mathcal E}^n}: {\bar{\tilde V}}_\varphi(K,E) = 1  \right\} $.

\noindent (3) $\mathop {\min }\limits_{\left\{ {E \in {{\mathcal E}^n}:{\bar{\tilde V}}_\varphi(K,E) = 1 } \right\}} V(E) = \mathop
{\min }\limits_{\left\{ {E \in {{\mathcal E}^n}:{\bar{\tilde V}}_\varphi(K,E) \le 1 } \right\}} V(E)$.

Consequently, the solutions to Problems ${\rm P}_3$ and ${\tilde S}_\varphi$ are identical.
\end{lemma}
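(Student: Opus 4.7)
The proof is essentially a packaging of the homogeneity identities in Lemma 3.4 and the equivalence in Lemma 3.5. The unifying idea for parts (1) and (3) is a single "tighten the constraint by dilation" argument: if an ellipsoid $E$ is feasible with strict inequality in its constraint, then a suitable contraction $\lambda E$ with $\lambda<1$ stays feasible and has strictly smaller volume, so $E$ cannot be a minimizer. Part (2) is an immediate translation of Lemma 3.5. Then combining (1), (2), (3) identifies the two constrained minimization problems.

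\medskip

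For part (1), suppose $E \in \mathcal{E}^n$ satisfies $O_\varphi(K,E)<1$. Set $\lambda = O_\varphi(K,E)$ and consider $E' = \lambda E$. By Lemma 3.4(2),
\[
O_\varphi(K, \lambda E) = \lambda^{-1} O_\varphi(K,E) = 1,
\]
so $E'$ lies in the feasible set of the right-hand side of (1). Meanwhile $V(E') = \lambda^n V(E) < V(E)$, so $E$ cannot be a minimizer of $V$ over $\{E \in \mathcal{E}^n : O_\varphi(K,E)\le 1\}$. Thus every minimizer of the left-hand side of (1) actually achieves $O_\varphi(K,E)=1$, and the two minima coincide.

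\medskip

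Part (2) is a direct restatement of the equivalence in Lemma 3.5: for any $E \in \mathcal{E}^n$, $\bar{\tilde V}_\varphi(K,E)=1$ iff $O_\varphi(K,E)=1$. This gives the claimed set equality. For part (3), the argument parallels (1), but I first need the slightly stronger observation that $\bar{\tilde V}_\varphi(K,E) < 1 \Longleftrightarrow O_\varphi(K,E) < 1$. This follows from Lemma 2.1 applied to $\psi(\lambda) = \int_{S^{n-1}} \varphi(\rho_K/(\lambda \rho_E))\, dV_K^*$: strict monotonicity of $\psi$ together with the characterization $O_\varphi(K,E) = \lambda_0$ iff $\psi(\lambda_0) = \varphi(1)$ forces the strict inequalities in both quantities to agree. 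Granting this, if $\bar{\tilde V}_\varphi(K,E)<1$ then $O_\varphi(K,E)<1$, and Lemma 3.5 yields $\bar{\tilde V}_\varphi(K, O_\varphi(K,E) E) = 1$ while $V(O_\varphi(K,E) E) < V(E)$, so again no minimizer can be strictly feasible.

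\medskip

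Finally, combining the three parts: by (1), Problem $\mathrm{P}_3$ is equivalent to minimizing $V(E)$ over $\{E \in \mathcal{E}^n : O_\varphi(K,E) = 1\}$; by (3), Problem $\tilde S_\varphi$ is equivalent to minimizing $V(E)$ over $\{E \in \mathcal{E}^n : \bar{\tilde V}_\varphi(K,E) = 1\}$; by (2) these two constraint sets coincide, so the two problems admit the same minimizers. Since Theorem 4.7 already gave the unique solvability of $\mathrm{P}_3$, the corresponding uniqueness for $\tilde S_\varphi$ follows. There is no essential obstacle; the only mildly subtle point is verifying the strict-inequality version of the $\bar{\tilde V}_\varphi$--$O_\varphi$ equivalence used in (3), which is a direct consequence of the strict monotonicity property in Lemma 2.1.
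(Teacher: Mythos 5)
Your proposal is correct and follows essentially the same route as the paper: the paper proves (1) by the identical dilation argument (referenced to the proof of Theorem 4.7(2)), gets (2) directly from Lemma 3.5, and proves (3) by exactly the same ``contract a strictly feasible ellipsoid'' device, using the strict monotonicity of $\psi$ from Lemma 2.1 to find $\lambda'<1$ with $\bar{\tilde V}_\varphi(K,\lambda' E)\le 1$ and $V(\lambda'E)<V(E)$. Your explicit verification that $\bar{\tilde V}_\varphi(K,E)<1\Leftrightarrow O_\varphi(K,E)<1$ is a harmless repackaging of that same monotonicity step.
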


\begin{proof}
The proof of  (1) can be referred to the proof of  Theorem \ref{thm 4.7} (2).
Assertion (2) follows from Lemma \ref{lem 3.5} directly.

Now, we prove assertion (3).  Let $E\in\mathcal{E}^n$ with ${\bar{\tilde V}}_\varphi(K,E)<1$.
From Definition \ref{def 3.1}  and Lemma \ref{lem 2.1}, we know that the unique positive $\lambda_0$ satisfying the equation
\[\int_{{S^{n - 1}}} {\varphi \left( {\frac{{{\rho _K}}}{{{\lambda _0}{\rho _E}}}} \right)} dV_K^* = \varphi (1)\]
is necessarily  in $(0,1)$, and
\[\int_{{S^{n - 1}}} {\varphi \left( {\frac{{{\rho _K}}}{{\lambda' {\rho _E}}}} \right)} dV_K^* \le \varphi (1),\quad
{\rm i.e.},\quad{{\tilde V}_\varphi }(K,\lambda' E) < 1,\quad {\rm for\; any}\; \lambda'\in (\lambda_0, 1).\]

At the same time, since $V(\lambda' E)<V(E)$,  $\forall \lambda'\in (\lambda_0,1)$, so $E$ cannot possibly solve the minimization problem
\[ \min\left\{V(E): E\in\mathcal{E}^n\;{\rm and}\; {\bar{\tilde  V}}_\varphi(K,E)\le 1\right\}. \]

Hence, assertion (3) is derived.

From the proved (1), (2) and (3), we can conclude that Problem ${\rm P}_3$ and  Problem ${\tilde S}_\varphi$ have  the same solution.
\end{proof}

For different dilations $\lambda_1 K$ and $\lambda_2 K$, $\lambda_1, \lambda_2>0$, 
Problems ${\rm P}_1$  do not generally have the
identical solution.  By contrast, the homogeneity of
$O_\varphi(\lambda K,L)$ in $\lambda\in (0,\infty)$ guarantees that
all Problems ${\rm P}_2$ for $\lambda K$ in $\lambda\in (0,\infty)$
have the identical unique solution. Problems ${\rm P}_3$ and
${\tilde S}_\varphi$ are identical, and Problem ${\rm P}_3$ is the
dual problem of ${\rm P}_2$. Thus, Problem ${\tilde S}_\varphi$ is
not the dual problem of ${\rm P}_1$ in general.

In view of Theorem \ref{thm 4.6}, Theorem \ref{thm 4.7} and Lemma
\ref{lem 4.8}, we are in the position to introduce a family of
ellipsoids in the framework of dual Orlicz Brunn-Minkowski theory,
which are extensions of Legendre ellipsoid.

\begin{definition}\label{def 4.9}
Suppose $K\in\mathcal{S}^n_o$ and $\varphi\in \Phi$. Amongst all
origin-symmetric ellipsoids $E$, the unique ellipsoid that solves
the constrained minimization problem
\[\mathop {\min }\limits_E V(E)\quad {\rm subject\;to}\quad O_\varphi(K,E)\le 1\]
is called the \emph{Orlicz-Legendre ellipsoid} of $K$ with respect
to $\varphi$, and is denoted by ${\rm L}_\varphi K$.

Amongst all origin-symmetric ellipsoids $E$, the unique ellipsoid that solves
the constrained minimization problem
\[\mathop {\min }\limits_E O_\varphi(K,E)\quad{\rm{subject}}\;{\rm{to}}\quad V(E) = {\omega _n} \]
is called the \emph{normalized Orlicz-Legendre ellipsoid} of $K$
with respect to $\varphi$, and is denoted by $\overline{{\rm
L}}_\varphi K$.
\end{definition}

For the polar of ${\rm L}_\varphi K$ or ${\overline {\rm L}}_\varphi K$, 
we write ${\rm L}_\varphi^* K$ or ${\overline {\rm L}}_\varphi^* K$, 
rather than $({\rm L}_\varphi K)^*$ or $({\overline {\rm L}}_\varphi K)^*$.

If $\varphi(t)=t^p$, $1\le p <\infty$, we write ${\rm L}_{\varphi} K$ and
${\overline {\rm L}}_\varphi K$ for ${\rm L}_p K$ and
${\overline {\rm L}}_pK$, respectively. Especially, ${\rm L}_2 K$ is
precisely the Legendre ellipsoid $\Gamma_2 K$.

We observe that for the case $\varphi(t)=t^p$, Problems ${\rm P}_1$
and ${\rm P}_2$ are identical, and were previously solved by Bastero
and Romance  \cite{BR}. Based on their works, Yu \cite{Yu}
introduced the ellipsoids ${\rm L}_p K$ for convex bodies containing
the origin in their interiors.

From Theorem \ref{thm 4.7}, it is obvious that
\begin{equation}\label{representations of O L ellipsoids}
{\rm L}_\varphi K=O_\varphi(K,{{\overline {\rm L}}_\varphi} K)
{{\overline {\rm L}}_\varphi} K\quad {\rm and}\quad {{\overline {\rm
L}}_\varphi} K= \left(\frac{\omega_n}{V({\rm L}_\varphi K)}\right)^{\frac{1}{n}}{\rm L}_\varphi K.
\end{equation}

Definition \ref{def 4.9} combined with inequality (\ref{ineq 3.6.1})
shows that for any $E\in\mathcal{E}^n$,
\[ {\rm L}_\varphi E=E.\]

From Definition \ref{def 4.9} and  Lemma \ref{lem 3.4}, we easily know that the 
operator $L_\varphi$ intertwines with elements of ${\rm GL}(n)$.
\begin{lemma}\label{lem 4.10}
Suppose $K\in\mathcal{S}^n$ and $\varphi\in\Phi$. Then for any $T\in
{\rm GL}(n)$,
\[{\rm L}_\varphi(TK)=T({\rm L}_\varphi K). \]
\end{lemma}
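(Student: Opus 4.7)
The proof is essentially a change-of-variables argument, built on Lemma \ref{lem 3.4}(1) (which governs how $O_\varphi$ transforms under $\mathrm{GL}(n)$) together with the standard fact $V(TE) = |T|\,V(E)$ for $T \in \mathrm{GL}(n)$. First I would note that $T \in \mathrm{GL}(n)$ maps $\mathcal{S}^n_o$ to itself and $\mathcal{E}^n$ bijectively to itself, so both $\mathrm{L}_\varphi(TK)$ and $T(\mathrm{L}_\varphi K)$ are well-defined origin-symmetric ellipsoids.

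Next, by Definition \ref{def 4.9}, the ellipsoid $\mathrm{L}_\varphi(TK)$ is the unique element of $\mathcal{E}^n$ solving
\[ \min_{E \in \mathcal{E}^n} V(E) \quad \text{subject to} \quad O_\varphi(TK,E) \le 1. \]
The plan is to parametrize the admissible $E$ as $E = T\tilde E$ with $\tilde E \in \mathcal{E}^n$. By Lemma \ref{lem 3.4}(1),
\[ O_\varphi(TK, T\tilde E) = O_\varphi(K, T^{-1}(T\tilde E)) = O_\varphi(K, \tilde E), \]
so the constraint $O_\varphi(TK,E) \le 1$ is equivalent to $O_\varphi(K,\tilde E) \le 1$. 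Since $V(T\tilde E) = |T|\,V(\tilde E)$ and $|T|>0$ is a fixed positive constant independent of $\tilde E$, minimizing $V(E)$ over the admissible $E$ is equivalent to minimizing $V(\tilde E)$ subject to $O_\varphi(K,\tilde E) \le 1$.

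The latter minimization problem is precisely the one defining $\mathrm{L}_\varphi K$, so by the uniqueness (Theorem \ref{thm 4.6} and Lemma \ref{lem 4.8}) its unique solution is $\tilde E = \mathrm{L}_\varphi K$. Translating back through the bijection $\tilde E \mapsto T\tilde E$, the unique optimizer $E$ of the original problem is $T(\mathrm{L}_\varphi K)$, which is therefore equal to $\mathrm{L}_\varphi(TK)$.

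There is no real obstacle here; the only thing to check with mild care is that the correspondence $\tilde E \leftrightarrow T\tilde E$ genuinely is a bijection on $\mathcal{E}^n$ (clear because $T$ is a linear isomorphism and origin-symmetry is preserved) and that the invocation of uniqueness applies to the star body $K \in \mathcal{S}^n_o$ appearing in the transformed problem. Both points are immediate from the setup.
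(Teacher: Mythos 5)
Your proof is correct and follows exactly the route the paper intends: the paper dispatches this lemma in one line by citing Definition \ref{def 4.9} and Lemma \ref{lem 3.4}, and your change-of-variables argument via $E = T\tilde E$, the identity $O_\varphi(TK,T\tilde E)=O_\varphi(K,\tilde E)$, and the constancy of the factor $|T|$ in the volume is precisely the argument being alluded to.
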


Incidentally, we introduce the following.

\begin{definition}\label{def 4.11}
Suppose $K\in\mathcal{S}^n_o$ and $\varphi\in \Phi$. Amongst all
origin-symmetric ellipsoids $E$, the unique ellipsoid which solves the constrained minimization problem
\[\mathop {\min }\limits_E {{\tilde V}_\varphi }(K,E)\quad{\rm{subject}}\;{\rm{to}}\quad V(E) = {\omega_n} \]
is denoted by ${\rm L}^\diamond_\varphi K$.
\end{definition}

Obviously, if $\varphi(t)=t^p$, $1\le p<\infty$, then 
${\rm L}_\varphi^\diamond K={\overline {\rm L}}_p K$.

\vskip 25pt
\section{\bf The continuity of Orlicz-Legendre ellipsoids}
\vskip 10pt

In this section, we aim to show the continuity of Orlicz-Legendre
ellipsoids ${\rm L}_\varphi K$ with respect to $\varphi$ and $K$.

Throughout this section, we suppose $\varphi\in\Phi$,
$K,K_i\in{\mathcal S}^n_o$, $\varphi,\varphi_j\in\Phi$,
$i,j\in\mathbb N$, and  $K_i\to K$ and $\varphi_j\to\varphi$.
It is easily seen that  there exist positive $r_m$ and $r_M$, such that
\[ r_mB\subseteq K\subseteq r_MB\quad {\rm and}\quad r_mB\subseteq K_i\subseteq r_MB\quad {\rm for \; each}\; i\in\mathbb N. \]

\begin{lemma}\label{lem 5.1}
$\sup \left\{ d\left( {{\rm L}_\varphi ^*K} \right), d\left( {{\rm
L}_\varphi ^*{K_i}} \right),d\left( {{\rm L}_{{\varphi _j}}^*K}
\right),{d\left( {{\rm L}_{{\varphi _j}}^*{K_i}} \right),\;{\rm{with}}\;i,j \in \mathbb{N}} \right\} < \infty$.
\end{lemma}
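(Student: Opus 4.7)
My approach would be to write each relevant ellipsoid as $TB$ with $T \in \mathscr{L}^n$ symmetric positive definite, reducing the problem to a uniform upper bound on $\|T^{-1}\|$, and then extract this bound by combining the equality $O_\varphi(K, {\rm L}_\varphi K) = 1$ with the growth estimate from Lemma~\ref{lem 4.1}.

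First, using polar decomposition (as in the proof of Theorem~\ref{thm 4.4}) I would write each of ${\rm L}_\varphi K, {\rm L}_\varphi K_i, {\rm L}_{\varphi_j} K, {\rm L}_{\varphi_j} K_i$ as $TB$ with $T$ symmetric positive definite. Then the corresponding polar is $T^{-1}B$, and by the identities collected in Section~2.4,
\[d({\rm L}_\varphi^* K) = \|T^{-1}\|.\]
So it suffices to bound $\|T^{-1}\|$ uniformly over the four families.

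Fixing a generic pair $(K,\varphi)$, I would factor $T = \lambda T_0$ with $T_0 \in {\rm SL}(n)$ and $\lambda = (V({\rm L}_\varphi K)/\omega_n)^{1/n}$. By Theorem~\ref{thm 4.7}(2) and Definition~\ref{def 4.9}, $O_\varphi(K, {\rm L}_\varphi K) = 1$, so by Lemma~\ref{lem 3.4}(2), $\lambda = O_\varphi(K, T_0 B)$. Comparison with the test ellipsoid $O_\varphi(K,B) \cdot B$ (feasible for the constraint by Lemma~\ref{lem 3.4}(2)) together with the volume-minimality of ${\rm L}_\varphi K$ yields $\lambda \le O_\varphi(K,B)$; meanwhile Lemma~\ref{lem 3.6} gives $\lambda \ge (V(K)/\omega_n)^{1/n}$. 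Next, Lemma~\ref{lem 3.4}(1) gives $O_\varphi(K, T_0 B) = O_\varphi(T_0^{-1} K, B)$, and applying Lemma~\ref{lem 4.1} with transformation $T_0^{-1}$ supplies a constant $C_1(K) > 0$ depending only on $\min_{S^{n-1}}\rho_K$ and $V(K)$ (crucially \emph{not} on $\varphi$) with $O_\varphi(T_0^{-1} K, B) \ge C_1(K) \|T_0^{-1}\|$. Combining,
\[\|T^{-1}\| \;=\; \lambda^{-1} \|T_0^{-1}\| \;\le\; \frac{O_\varphi(K,B)}{C_1(K)\,(V(K)/\omega_n)^{1/n}}.\]

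For the uniformity, since $K_i \to K$ in $\tilde\delta_H$ there exist $0 < r_m \le r_M$ with $r_m B \subseteq K, K_i \subseteq r_M B$ for all $i$, giving uniform positive lower bounds on $V(K), V(K_i)$ and on $\min \rho_K, \min \rho_{K_i}$, hence uniform positive lower bounds on $C_1(K)$ and each $C_1(K_i)$. By Lemma~\ref{lem 3.8}, $O_{\varphi_j}(K_i, B) \to O_\varphi(K, B)$, so $\{O_{\varphi_j}(K_i, B)\}_{i,j}$ is bounded above. Inserting these uniform bounds into the displayed inequality yields the desired uniform upper bound on $d({\rm L}_\cdot^* \cdot)$ across all four families. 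The main obstacle is recognizing that the constant $C_1$ in Lemma~\ref{lem 4.1} is $\varphi$-independent; this is precisely what allows the argument to be uniform in $\{\varphi_j\}$, after which the remainder is a routine chain of inequalities.
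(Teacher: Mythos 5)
Your proposal is correct, but it reaches the bound by a genuinely different route than the paper. The paper's proof works directly with the feasibility condition: for \emph{any} $E\in\mathcal{E}^n$ with $O_\varphi(K,E)\le 1$, it uses $r_mB\subseteq K\subseteq r_MB$, Jensen's inequality, the polarity relation $1/\rho_E=h_{E^*}$, the pointwise bound $h_{E^*}(u)\ge d(E^*)|v_{E^*}\cdot u|$ and Cauchy's projection formula to get
\[
\varphi(1)\ \ge\ \Bigl(\tfrac{r_m}{r_M}\Bigr)^{n}\varphi\Bigl(\tfrac{2r_m\omega_{n-1}}{n\omega_n}\,d(E^*)\Bigr),
\]
then inverts $\varphi$; uniformity in $j$ comes from $\varphi_j(1)\to\varphi(1)$ and $\varphi_j^{-1}\to\varphi^{-1}$. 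You instead decompose ${\rm L}_\varphi K=\lambda T_0B$ with $T_0\in{\rm SL}(n)$, use $O_\varphi(K,{\rm L}_\varphi K)=1$ together with Lemma \ref{lem 3.4} to identify $\lambda=O_\varphi(K,T_0B)=O_\varphi(T_0^{-1}K,B)$, and then invoke the coercivity estimate $O_\varphi(T K,B)\ge C_1(K)\|T\|$ from the proof of Lemma \ref{lem 4.1}; note that combined with $\|T^{-1}\|=\lambda^{-1}\|T_0^{-1}\|$ this actually collapses to the clean bound $d({\rm L}_\varphi^*K)\le 1/C_1(K)$, so the comparison with the test ellipsoid $O_\varphi(K,B)B$ and the lower bound on $\lambda$ are not even needed. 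Your key observation — that $C_1$ depends only on $\min_{S^{n-1}}\rho_K$, $V(K)$ and $n$, not on $\varphi$ — is accurate and is exactly what makes the argument uniform in $\{\varphi_j\}$; the only caveat is that you are using the displayed inequality inside the proof of Lemma \ref{lem 4.1} rather than its stated (limit) conclusion, which is legitimate but should be cited as such. What the paper's route buys is a bound valid for every feasible ellipsoid, not just the minimizer, at the cost of a separate integral-geometric estimate; what yours buys is economy, by recycling machinery already established in Section 4.
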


\begin{proof}
Let $E\in {\mathcal E}^n$. First, we prove the implication

\begin{equation}\label{(5.1)}
O_\varphi(K,E)\le 1 \quad \Longrightarrow\quad d({E^*}) \le \frac{{n{\omega _n}}}{{2{r_m}{\omega _{n - 1}}}}{\varphi ^{ -
1}}\left( {{{\left( {\frac{{{r_M}}}{{{r_m}}}} \right)}^n}\varphi (1)} \right).
\end{equation}

Assume $O_\varphi(K,E)\le 1$. From the definition of
$O_\varphi(K,E)$ together with Lemma \ref{lem 2.1} and Lemma
\ref{lem 3.5},  the definition of ${\tilde V}_\varphi(K,E)$, the
fact $r_mB\subseteq K\subseteq r_MB$ together with the monotonicity
of $\varphi$, the convexity of $\varphi$ together with Jensen's
inequality, (2.3), the fact $h_{E^*}(u)\ge d(E^*)|v_{E^*}\cdot u|$
for $u\in S^{n-1}$, and finally Cauchy's projection formula, it
follows that
\begin{align*}
\varphi (1) &\ge \frac{{{{\tilde V}_\varphi }(K,E)}}{{V(K)}} \\
&= \frac{1}{{nV(K)}}\int_{{S^{n - 1}}} {\varphi \left( {\frac{{{\rho _K}}}{{{\rho _E}}}} \right)\rho _K^ndS}  \\
&\ge {\left( {\frac{{{r_m}}}{{{r_M}}}} \right)^n}\frac{1}{{n{\omega _n}}}\int_{{S^{n - 1}}} {\varphi \left( {\frac{{{r_m}}}{{{\rho _E}}}} \right)dS}  \\
&\ge {\left( {\frac{{{r_m}}}{{{r_M}}}} \right)^n}\varphi \left( {\frac{1}{{n{\omega _n}}}\int_{{S^{n - 1}}} {\frac{{{r_m}}}{{{\rho _E}}}dS} } \right) \\
&= {\left( {\frac{{{r_m}}}{{{r_M}}}} \right)^n}\varphi \left( {\frac{{{r_m}}}{{n{\omega _n}}}\int_{{S^{n - 1}}} {{h_{{E^*}}}dS} } \right) \\
&\ge {\left( {\frac{{{r_m}}}{{{r_M}}}} \right)^n}\varphi \left( {\frac{{{r_m}}}{{n{\omega _n}}}\int_{{S^{n - 1}}} {d({E^*})|{v_{{E^*}}} \cdot u|dS(u)} } \right) \\
&= {\left( {\frac{{{r_m}}}{{{r_M}}}} \right)^n}\varphi \left( {\frac{{2{r_m}{\omega _{n - 1}}}}{{n{\omega _n}}}d({E^*})}\right).
\end{align*}
Thus,
\[\varphi (1) \ge {\left( {\frac{{{r_m}}}{{{r_M}}}} \right)^n}\varphi \left( {\frac{{2{r_m}{\omega _{n - 1}}}}{{n{\omega _n}}}d({E^*})} \right).\]
From the monotonicity of $\varphi$, it yields the inequality in (\ref{(5.1)}).

Since that $\varphi_j\to\varphi$ implies
$\varphi_j(1)\to\varphi(1)$ and $\varphi_j^{-1}\to\varphi^{-1}$, it follows that
\[\varphi _j^{ - 1}\left( {{{\left( {\frac{{{r_M}}}{{{r_m}}}} \right)}^n}{\varphi _j}(1)} \right) \to {\varphi ^{ - 1}}\left( {{{\left( {\frac{{{r_M}}}{{{r_m}}}} \right)}^n}\varphi (1)} \right),\]
and therefore
\[\sup \left\{ {{\varphi ^{ - 1}}\left( {{{\left( {\frac{{{r_M}}}{{{r_m}}}} \right)}^n}\varphi (1)} \right),\varphi _j^{ - 1}\left( {{{\left( {\frac{{{r_M}}}{{{r_m}}}} \right)}^n}{\varphi _j}(1)} \right),{\rm{with}}\;j \in \mathbb{N}} \right\} < \infty .\]
This, as well as (\ref{(5.1)}), proves the desired lemma.
\end{proof}

In light of Lemma \ref{lem 5.1} and Lemma \ref{lem 2.2}, we can show
\begin{lemma}\label{lem 5.2}
$\sup \left\{ {d\left( {{{\overline {\rm L} }_\varphi }K} \right),d\left( {{{\overline {\rm L} }_\varphi }{K_i}}
\right),d\left( {{{\overline {\rm L} }_{{\varphi _j}}}K} \right),d\left({{{\overline {\rm L} }_{{\varphi _j}}}{K_i}} \right),
{\rm{with}}\;i,j \in {\mathbb N}} \right\} < \infty$.
\end{lemma}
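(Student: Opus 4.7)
The strategy is to deduce bounds on the four $d(\overline{{\rm L}}\,\cdot\,)$ quantities from Lemma \ref{lem 5.1}'s bounds on the corresponding $d({\rm L}^*\,\cdot\,)$ quantities, by first relating $\overline{{\rm L}}_\varphi K$ to ${\rm L}_\varphi K$ through the scaling identity (\ref{representations of O L ellipsoids}), and then transferring the bound from the polar ellipsoid back to the ellipsoid itself via Lemma \ref{lem 2.2}.

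First I control the scaling factor. By (\ref{representations of O L ellipsoids}), ${\rm L}_\varphi K = \lambda_K \overline{{\rm L}}_\varphi K$ where $\lambda_K := O_\varphi(K, \overline{{\rm L}}_\varphi K)$. Since $\overline{{\rm L}}_\varphi K$ minimizes $O_\varphi(K, \cdot)$ over $\{E \in \mathcal{E}^n : V(E) = \omega_n\}$ and $B$ belongs to this set, $\lambda_K \le O_\varphi(K, B)$. Because $V_K^*$ is a probability measure on $S^{n-1}$ and $\rho_K \le r_M$, taking $\lambda = r_M$ in Definition \ref{def 3.3} yields $\int_{S^{n-1}} \varphi(\rho_K / r_M)\, dV_K^* \le \varphi(1)$, so $O_\varphi(K,B) \le r_M$. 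The identical estimate holds with $K$ replaced by $K_i$ (using $\rho_{K_i} \le r_M$) and with $\varphi$ replaced by $\varphi_j$, so all four scaling factors are uniformly bounded above by $r_M$.

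Next I pass to the polars. From ${\rm L}_\varphi K = \lambda_K \overline{{\rm L}}_\varphi K$ and (\ref{polarity2}), $\overline{{\rm L}}_\varphi^* K = \lambda_K\, {\rm L}_\varphi^* K$, hence
\[ d(\overline{{\rm L}}_\varphi^* K) = \lambda_K\, d({\rm L}_\varphi^* K) \le r_M \cdot d({\rm L}_\varphi^* K). \]
Applying this in each of the four cases and combining with Lemma \ref{lem 5.1} shows that $d(\overline{{\rm L}}_\varphi^* K)$, $d(\overline{{\rm L}}_\varphi^* K_i)$, $d(\overline{{\rm L}}_{\varphi_j}^* K)$, $d(\overline{{\rm L}}_{\varphi_j}^* K_i)$ are uniformly bounded over $i,j \in \mathbb{N}$.

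Finally I invoke Lemma \ref{lem 2.2}. Since each $\overline{{\rm L}}_\varphi K$ has volume $\omega_n$, modulo orthogonal transformation I write it as $TB$ with $T \in {\rm SL}(n)$, so that $d(\overline{{\rm L}}_\varphi K) = \|T\|$ and $d(\overline{{\rm L}}_\varphi^* K) = \|T^{-1}\|$. The previous step bounds the norms $\|T^{-1}\|$ uniformly across all four families; Lemma \ref{lem 2.2} then bounds the corresponding $\|T\|$ uniformly, which is the desired conclusion. The only delicate point is ensuring that the constants stack cleanly: the estimate $O_\varphi(K,B) \le r_M$ (and its variants) depends only on the common radius bound $r_M$ coming from $K, K_i \subseteq r_M B$ and the elementary inequality $\varphi(\rho_K / r_M) \le \varphi(1)$, so no $i$- or $j$-dependent constants enter and the final bound is genuinely uniform.
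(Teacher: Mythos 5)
Your proof is correct, and its skeleton matches the paper's: bound the polars $d(\overline{\rm L}{}^*_\cdot\,\cdot)$ by relating $\overline{\rm L}_\varphi K$ to ${\rm L}_\varphi K$ through the scaling in (\ref{representations of O L ellipsoids}), invoke Lemma \ref{lem 5.1}, and then pass from $\|T^{-1}\|$ to $\|T\|$ via Lemma \ref{lem 2.2}. The one genuine difference is how the scaling factor is controlled. The paper writes $\overline{\rm L}_\varphi K=(\omega_n/V({\rm L}_\varphi K))^{1/n}{\rm L}_\varphi K$ and bounds $(V({\rm L}_\varphi K)/\omega_n)^{1/n}\le (V({\rm L}_\infty K)/\omega_n)^{1/n}\le r_M$, which requires forward references to Definition \ref{def 6.1}, Theorem \ref{thm 6.2} and the volume monotonicity of Theorem \ref{thm 8.1} (the paper explicitly flags this). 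You instead bound the same quantity — note that $O_\varphi(K,\overline{\rm L}_\varphi K)=(V({\rm L}_\varphi K)/\omega_n)^{1/n}$ — by the minimality of $\overline{\rm L}_\varphi K$ in Problem ${\rm P}_2$ together with the elementary estimate $O_\varphi(K,B)\le r_M$, which follows from $\rho_K\le r_M$, the monotonicity of $\varphi$, and the fact that $V_K^*$ is a probability measure. Your route is more self-contained: it stays entirely within the material of Sections 2--4 and removes the slightly awkward dependence on results proved only in Sections 6 and 8, at no cost in length. The uniformity in $i,j$ is also handled correctly, since your bound depends only on the common constant $r_M$.
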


\begin{proof}
From (4.3), we have
\begin{align*}
d\left( {\overline {\rm L}}^* _\varphi K  \right) &= d\left( {{{\left( {{{\left( {\frac{{{\omega _n}}}{{V({{\rm L}_\varphi }K)}}} \right)}^{\frac{1}{n}}}{{\rm L}_\varphi }K} \right)}^*}} \right) \\
&= {\left( {\frac{{V({{\rm L}_\varphi }K)}}{{{\omega _n}}}} \right)^{\frac{1}{n}}}d\left( {{\rm L}_\varphi ^*K} \right) \\
&\le {\left( {\frac{{V({{\rm L}_\infty }K)}}{{{\omega _n}}}} \right)^{\frac{1}{n}}}d\left( {{\rm L}_\varphi ^*K} \right) \\
&\le {r_M}d\left( {{\rm L}_\varphi ^*K} \right),
\end{align*}
That is,
\begin{equation}\label{diameter ineq}
d\left( {\overline {\rm L}}^* _\varphi K  \right)\le {r_M}d\left( {{\rm L}_\varphi ^*K} \right).
\end{equation}
Note that Definition \ref{def 6.1}, Theorem \ref{thm 6.2} and the
inequality $V({\rm L}_\varphi K)\le V({\rm L}_\infty K)$ given by
Theorem \ref{thm 8.2},  are previously used here.

Observe that (\ref{diameter ineq}) also holds when
$\varphi$ is replaced by $\varphi_j$ or $K$ is replaced by $K_i$.
Thus, by Lemma \ref{lem 5.1}, it follows
\[\sup \left\{ {d\left( {\overline {\rm L} _\varphi ^*K} \right), d\left( {\overline {\rm L} _\varphi ^*{K_i}} \right), d\left( {\overline {\rm L} _{{\varphi _j}}^*K} \right), d\left( {\overline {\rm L} _{{\varphi _j}}^*{K_i}} \right),\;{\rm{with}}\;i,j \in {\mathbb N}} \right\} < \infty .\]

Take $T_{(0,0)}, T_{(i,j)}, T_{(i,0)}, T_{(0,j)}\in {\rm SL}(n)$,
with $i,j\in\mathbb N$, such that
\[T_{(0,0)}B={\overline {\rm L}}_{\varphi}K,\quad T_{(i,0)}B={\overline {\rm L}}_{\varphi}K_i,\quad T_{(0,j)}B={\overline {\rm L}}_{\varphi_j}K,\quad T_{(i,j)}B={\overline {\rm L}}_{\varphi_j}K_i.\]
Then,
\[\sup \left\{ {\left\| {{T^{-1}_{(0,0)}}} \right\|,\left\| {{T^{-1}_{(i,j)}}} \right\|,\left\| {{T^{-1}_{(i,0)}}} \right\|,\left\| {{T^{-1}_{(0,j)}}} \right\|,\;{\rm{with}}\;i,j \in {\mathbb N}} \right\} < \infty .\]
This, together with Lemma \ref{lem 2.2}, gives
\[\sup \left\{ {\left\| {{T_{(0,0)}}} \right\|,\left\| {{T_{(i,j)}}} \right\|,\left\| {{T_{(i,0)}}} \right\|,\left\| {{T_{(0,j)}}} \right\|,\;{\rm{with}}\;i,j \in {\mathbb N}} \right\} < \infty .\]
Hence, the desired lemma is proved.
\end{proof}

Now, from Lemma \ref{lem 5.2},  there exists a constant $R\in (0,\infty)$, such that all the ellipsoids
${\overline {\rm L}} _\varphi K $, ${\overline {\rm L}} _{\varphi_j} K$, ${\overline {\rm L}}_\varphi K_i$
and ${\overline {\rm L}} _{\varphi_j} K_i$ are in the set
\[\mathcal{E}_R =\left\{E\in\mathcal{E}^n: V(E)=\omega_n\; {\rm and}\; E\subseteq RB  \right\}.\]

From the compactness of the sets $\mathcal{E}_R$ and
$\{K\in\mathcal{S}^n_o: r_m B\subseteq K\subseteq r_MK\}$, together
with Lemma \ref{lem 3.8}, we immediately obtain:
\begin{lemma}\label{lem 5.3}
The limit
$\mathop {\lim }\limits_{i,j \to \infty } {O_{{\varphi _j}}}({K_i},E) = {O_\varphi }(K,E)$
is uniform in $E\in\mathcal{E}_R$.
\end{lemma}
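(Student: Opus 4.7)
The plan is to derive Lemma 5.3 from the joint continuity of $O_\varphi$ (Lemma 3.8) together with a compactness argument on $\mathcal{E}_R$, via a standard subsequence-of-subsequence contradiction.

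First I would verify that $\mathcal{E}_R$ is compact in $\mathcal{E}^n$. Each $E\in\mathcal{E}_R$ can be written as $TB$ for a unique symmetric positive definite $T\in{\rm SL}(n)$; the containment $E\subseteq RB$ gives $\|T\|=d(TB)\le R$, while $\det T=1$ forces every semi-axis to exceed $R^{-(n-1)}$. Thus the corresponding set of matrices is closed and bounded in $(\mathscr{L}^n,d_n)$, hence compact, and by Lemma 2.3 and Lemma 2.4 this translates into compactness of $\mathcal{E}_R$ with respect to either $\delta_H$ or $\tilde\delta_H$. In particular, from any sequence in $\mathcal{E}_R$ one can extract a subsequence converging to some $E_0\in\mathcal{E}_R$.

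Next I would argue by contradiction. If the convergence asserted in Lemma 5.3 were not uniform in $E\in\mathcal{E}_R$, then there would exist an $\varepsilon>0$, subsequences $i_k,j_k\to\infty$, and ellipsoids $E_k\in\mathcal{E}_R$ with
\[\bigl|O_{\varphi_{j_k}}(K_{i_k},E_k)-O_\varphi(K,E_k)\bigr|\ge \varepsilon,\qquad \text{for all }k\in\mathbb N.\]
By the compactness above, pass to a further subsequence so that $E_k\to E_0\in\mathcal{E}_R$. Apply Lemma 3.8 twice: once to the triple $(K_{i_k},E_k,\varphi_{j_k})\to(K,E_0,\varphi)$ to get $O_{\varphi_{j_k}}(K_{i_k},E_k)\to O_\varphi(K,E_0)$, and once to $(K,E_k,\varphi)\to(K,E_0,\varphi)$ to get $O_\varphi(K,E_k)\to O_\varphi(K,E_0)$. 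Subtracting yields a contradiction with the lower bound $\varepsilon$, which proves the lemma.

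The main obstacle, and essentially the only non-routine point, is the compactness of $\mathcal{E}_R$: one must combine the volume normalization $V(E)=\omega_n$ with the uniform outer bound $E\subseteq RB$ to rule out degenerate limit ellipsoids. Once this is secured, the subsequence extraction goes through cleanly and Lemma 3.8 supplies the pointwise joint continuity required to close the argument.
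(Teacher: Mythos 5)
Your proof is correct and takes essentially the same route as the paper: the paper simply asserts that the lemma follows "from the compactness of $\mathcal{E}_R$ ... together with Lemma 3.8", and your subsequence-contradiction argument (plus the verification that $\mathcal{E}_R$ is compact via the eigenvalue bounds $R^{-(n-1)}\le a_i\le R$ and Lemmas 2.3--2.4) is exactly the standard way to fill in that assertion.
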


\begin{lemma}\label{lem 5.4}
$\mathop {\lim }\limits_{i,j \to \infty } {O_{{\varphi _j}}}({K_i},{\overline {\rm L} _{{\varphi _j}}}{K_i}) = {O_\varphi }(K,{\overline {\rm L} _\varphi }K)$.
\end{lemma}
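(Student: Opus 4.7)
The plan is to reduce the statement to the two standard halves of a variational convergence argument. Set $\lambda := O_\varphi(K,\overline{\rm L}_\varphi K)$ and $\lambda_{i,j} := O_{\varphi_j}(K_i,\overline{\rm L}_{\varphi_j}K_i)$, so that the assertion becomes $\lambda_{i,j}\to\lambda$. I would prove $\limsup\lambda_{i,j}\le\lambda$ by exhibiting a fixed test ellipsoid, and $\liminf\lambda_{i,j}\ge\lambda$ by a subsequential compactness argument combined with the uniqueness in Theorem \ref{thm 4.6}.

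For the upper bound, observe that $\overline{\rm L}_\varphi K$ is origin-symmetric with $V(\overline{\rm L}_\varphi K)=\omega_n$, hence it is an admissible competitor in Problem ${\rm P}_2$ for the data $(K_i,\varphi_j)$. By Definition \ref{def 4.9} this yields
\[
\lambda_{i,j}=O_{\varphi_j}(K_i,\overline{\rm L}_{\varphi_j}K_i)\le O_{\varphi_j}(K_i,\overline{\rm L}_\varphi K).
\]
Since $K_i\to K$ and $\varphi_j\to\varphi$, the joint continuity of $O_\varphi(\cdot,\cdot)$ supplied by Lemma \ref{lem 3.8} gives $O_{\varphi_j}(K_i,\overline{\rm L}_\varphi K)\to O_\varphi(K,\overline{\rm L}_\varphi K)=\lambda$, so $\limsup_{i,j}\lambda_{i,j}\le\lambda$.

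For the lower bound, Lemma \ref{lem 5.2} places the entire family $\{\overline{\rm L}_{\varphi_j}K_i\}$ inside the set $\mathcal E_R=\{E\in\mathcal E^n:V(E)=\omega_n,\ E\subseteq RB\}$, which is compact with respect to $\delta_H$ (equivalently $\tilde\delta_H$ by Lemma \ref{lem 2.4}) and on which the volume functional is continuous. Given any subsequence of $(i,j)$, extract a further subsequence $(i_p,j_q)$ along which $\overline{\rm L}_{\varphi_{j_q}}K_{i_p}\to E^\ast$ for some $E^\ast\in\mathcal E_R$; in particular $V(E^\ast)=\omega_n$, so $E^\ast$ is an admissible competitor in Problem ${\rm P}_2$ for $(K,\varphi)$. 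A second application of Lemma \ref{lem 3.8} gives
\[
\lambda_{i_p,j_q}=O_{\varphi_{j_q}}\bigl(K_{i_p},\overline{\rm L}_{\varphi_{j_q}}K_{i_p}\bigr)\longrightarrow O_\varphi(K,E^\ast),
\]
while Theorem \ref{thm 4.6} (unique minimality of $\overline{\rm L}_\varphi K$) forces $O_\varphi(K,E^\ast)\ge\lambda$. Thus every subsequential limit of $\{\lambda_{i,j}\}$ is at least $\lambda$, giving $\liminf\lambda_{i,j}\ge\lambda$ and completing the proof.

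The main technical point is the compactness step: one must know in advance that the candidate limit $E^\ast$ is a genuine full-dimensional origin-symmetric ellipsoid, rather than a degenerate flat set, so that it is a legitimate competitor against which the uniqueness clause of Theorem \ref{thm 4.6} can be invoked. Both the non-degeneracy ($V(E^\ast)=\omega_n$) and the uniform boundedness ($E^\ast\subseteq RB$) needed for this are exactly what Lemma \ref{lem 5.2} was set up to supply, so once that lemma is in hand the rest of the argument is routine.
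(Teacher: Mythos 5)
Your proof is correct, but it takes a genuinely different route from the paper. The paper first records in Lemma \ref{lem 5.3} that the convergence $O_{\varphi_j}(K_i,E)\to O_\varphi(K,E)$ is \emph{uniform} over $E\in\mathcal{E}_R$, and then derives Lemma \ref{lem 5.4} in a single computation by interchanging $\lim_{i,j\to\infty}$ with $\min_{E\in\mathcal{E}_R}$; the uniformity is precisely what licenses that interchange. You instead bypass Lemma \ref{lem 5.3} entirely and run the standard two-sided variational argument: the upper bound $\limsup_{i,j}\lambda_{i,j}\le\lambda$ comes from testing $\overline{\rm L}_\varphi K$ (a fixed admissible competitor) against Problem ${\rm P}_2$ for $(K_i,\varphi_j)$ and invoking the joint continuity from Lemma \ref{lem 3.8}, while the lower bound $\liminf_{i,j}\lambda_{i,j}\ge\lambda$ comes from the compactness of $\mathcal{E}_R$ (furnished by Lemma \ref{lem 5.2}, with Lemma \ref{lem 2.4} translating between the metrics), a subsequential limit $E^*$, another application of Lemma \ref{lem 3.8}, and the minimality of $\overline{\rm L}_\varphi K$. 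Both arguments rest on the same foundations (boundedness of the family of normalized Orlicz--Legendre ellipsoids, compactness of $\mathcal{E}_R$, continuity of $O_\varphi$), but your version is structurally closer to a standard $\Gamma$-convergence-style proof and avoids formulating the uniform-convergence auxiliary lemma. One small inaccuracy worth correcting: for the lower bound you cite ``Theorem \ref{thm 4.6} (unique minimality),'' but only the \emph{existence/minimality} of $\overline{\rm L}_\varphi K$ as a solution of Problem ${\rm P}_2$ is needed there --- the uniqueness clause is what the paper later uses in Lemma \ref{lem 5.5} to identify the subsequential limit, not here.
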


\begin{proof}
From Definition \ref{def 4.9} and  Lemma \ref{lem 5.3}, we have
\begin{align*}
\mathop {\lim }\limits_{i,j \to \infty } {O_{{\varphi _j}}}({K_i},{\overline {\rm L} _{{\varphi _j}}}{K_i}) &= \mathop {\lim }\limits_{i,j \to \infty } \mathop {\min }\limits_{E \in {{\mathcal E}_R}} {O_{{\varphi _j}}}({K_i},E) \\
&= \mathop {\min }\limits_{E \in {{\mathcal E}_R}} \mathop {\lim }\limits_{i,j \to \infty } {O_{{\varphi _j}}}({K_i},E) \\
&= \mathop {\min }\limits_{E \in {{\mathcal E}_R}} {O_\varphi }(K,E) \\
&= {O_\varphi }(K,{\overline {\rm L} _\varphi }K),
\end{align*}
as desired.
\end{proof}

\begin{lemma}\label{lem 5.5}
$\mathop {\lim }\limits_{i,j \to \infty } {\overline {\rm L} _{{\varphi _j}}}{K_i} = {\overline {\rm L} _\varphi }K $.
\end{lemma}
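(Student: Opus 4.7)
The plan is to deduce Lemma 5.5 from Lemma 5.4 together with the uniqueness of the solution to Problem $\mathrm{P}_2$ (Theorem 4.6), via a standard subsequence argument exploiting the compactness established in Lemma 5.2.

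First I would note that by Lemma 5.2 all the ellipsoids $\overline{\mathrm{L}}_{\varphi_j} K_i$ and $\overline{\mathrm{L}}_\varphi K$ lie in the set $\mathcal{E}_R = \{E \in \mathcal{E}^n : V(E) = \omega_n \text{ and } E \subseteq RB\}$, which is compact with respect to both $\delta_H$ and $\tilde\delta_H$ (these metrics being equivalent on volume-normalized ellipsoids by Lemma 2.4). Thus to prove convergence of the double-indexed sequence $\overline{\mathrm{L}}_{\varphi_j} K_i$ to $\overline{\mathrm{L}}_\varphi K$, it suffices to show that every convergent subsequence has the same limit $\overline{\mathrm{L}}_\varphi K$.

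Next, suppose $\overline{\mathrm{L}}_{\varphi_{j_q}} K_{i_p} \to E_0$ for some $E_0 \in \mathcal{E}_R$, so in particular $V(E_0) = \omega_n$. Applying Lemma 3.8 with $K_{i_p} \to K$, $\overline{\mathrm{L}}_{\varphi_{j_q}} K_{i_p} \to E_0$ and $\varphi_{j_q} \to \varphi$, we obtain
\[
\lim_{p,q \to \infty} O_{\varphi_{j_q}}(K_{i_p}, \overline{\mathrm{L}}_{\varphi_{j_q}} K_{i_p}) = O_\varphi(K, E_0).
\]
On the other hand, Lemma 5.4 gives
\[
\lim_{p,q \to \infty} O_{\varphi_{j_q}}(K_{i_p}, \overline{\mathrm{L}}_{\varphi_{j_q}} K_{i_p}) = O_\varphi(K, \overline{\mathrm{L}}_\varphi K).
\]
Comparing, $O_\varphi(K, E_0) = O_\varphi(K, \overline{\mathrm{L}}_\varphi K)$.

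Finally, since $V(E_0) = \omega_n$ and $\overline{\mathrm{L}}_\varphi K$ is, by Definition 4.9, the unique minimizer of $E \mapsto O_\varphi(K, E)$ over ellipsoids $E \in \mathcal{E}^n$ with $V(E) = \omega_n$ (Theorem 4.6), we conclude $E_0 = \overline{\mathrm{L}}_\varphi K$. Because every convergent subsequence of $\{\overline{\mathrm{L}}_{\varphi_j} K_i\}$ has this common limit and the whole sequence lies in the compact set $\mathcal{E}_R$, it follows that $\overline{\mathrm{L}}_{\varphi_j} K_i \to \overline{\mathrm{L}}_\varphi K$, as required. The main obstacle is conceptually mild: one only needs to verify that Lemma 3.8 can be applied to the triply-varying arguments $(K_{i_p}, \overline{\mathrm{L}}_{\varphi_{j_q}} K_{i_p}, \varphi_{j_q})$, which is permitted since all three ingredients converge in their respective topologies, and that $E_0$ is genuinely an element of $\mathcal{E}^n$ rather than a degenerate limit—this is ensured by the uniform containment $\overline{\mathrm{L}}_{\varphi_j} K_i \subseteq RB$ together with the fixed volume $\omega_n$ forcing $E_0$ to be a nondegenerate origin-symmetric ellipsoid.
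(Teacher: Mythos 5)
Your proof is correct and takes essentially the same route as the paper's: compactness of the ambient class of ellipsoids, passage to a convergent subsequence, computation of $O_\varphi(K,E_0)$ via Lemmas \ref{lem 3.8} and \ref{lem 5.4}, and conclusion by the uniqueness of the solution to Problem ${\rm P}_2$. The only cosmetic difference is that the paper phrases the argument as a proof by contradiction (assuming $E_0\neq\overline{\rm L}_\varphi K$), whereas you use the equivalent direct subsequence formulation; your direct appeal to Lemma \ref{lem 3.8} with $K_{i_p}$ as the first argument, matching Lemma \ref{lem 5.4} exactly, is also a bit more streamlined than the chain of equalities displayed in the paper.
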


\begin{proof}
We argue by contradiction and assume the proposition is false.

Then, from  the compactness of $\mathcal{E}_R$ and  Lemma
\ref{lem 2.3}, there exists a convergent subsequence
$\{{\overline {\rm L}}_{\varphi_{j_q}} K_{i_p} \}_{p,q\in\mathbb N}$, such that
\begin{equation}\label{contradiction}
\mathop {\lim }\limits_{i,j \to \infty } {\overline {\rm L}}
_{{\varphi _j}}{K_i} = {E_0} \in \mathcal{E}_R\quad {\rm{and}}\quad {E_0} \neq {\overline {\rm L}}_\varphi K.
\end{equation}

From Lemma \ref{lem 3.8} and Lemma \ref{lem 5.4}, it follows that
\begin{align*}
{O_\varphi }(K,\mathop {\lim }\limits_{p,q \to \infty } {\overline {\rm L} _{{\varphi _{{j_q}}}}}{K_{{i_p}}}) &= \mathop {\lim }\limits_{p,q \to \infty } {O_\varphi }(K,{\overline {\rm L} _{{\varphi _{{j_q}}}}}{K_{{i_p}}}) \\
&= \mathop {\lim }\limits_{p,q \to \infty } \mathop {\lim }\limits_{k \to \infty } {O_{{\varphi _k}}}(K,{\overline {\rm L} _{{\varphi _{{j_q}}}}}{K_{{i_p}}}) \\
&= \mathop {\lim }\limits_{p,q,k \to \infty } {O_{{\varphi _k}}}(K,{\overline {\rm L} _{{\varphi _{{j_q}}}}}{K_{{i_p}}}) \\
&= \mathop {\lim }\limits_{p,q \to \infty } {O_{{\varphi _{{j_q}}}}}(K,{\overline {\rm L} _{{\varphi _{{j_q}}}}}{K_{{i_p}}}) \\
&= {O_\varphi }(K,{\overline {\rm L} _\varphi }K).
\end{align*}

Since the solution to Problem ${\rm P}_2$ is unique, we have
\[\mathop {\lim }\limits_{p,q \to \infty } {\overline {\rm L} _{{\varphi _{{j_q}}}}}{K_{{i_p}}} = {\overline {\rm L} _\varphi }K,\]
which contradicts (\ref{contradiction}).
\end{proof}

\begin{theorem}\label{thm 5.6}
Suppose $K,K_i\in {\mathcal S}^n_o$ and $\varphi,\varphi_j\in\Phi$, $i,j\in\mathbb N$.
If $K_i\to K$ and $\varphi_j\to\varphi$,
then
\[\mathop {\lim }\limits_{i,j \to \infty } {{\rm L}_{{\varphi _j}}}{K_i} = {{\rm L}_\varphi }K.\]
\end{theorem}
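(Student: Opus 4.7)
The plan is to leverage the two representations recorded in display (4.3), namely
\[
{\rm L}_\varphi K = O_\varphi(K,\overline{\rm L}_\varphi K)\,\overline{\rm L}_\varphi K,
\]
so that the convergence of ${\rm L}_{\varphi_j}K_i$ reduces to the joint convergence of the scalar factor $O_{\varphi_j}(K_i,\overline{\rm L}_{\varphi_j}K_i)$ together with the ellipsoid $\overline{\rm L}_{\varphi_j}K_i$. Both of these convergences have just been prepared: Lemma \ref{lem 5.5} gives $\overline{\rm L}_{\varphi_j}K_i \to \overline{\rm L}_\varphi K$, and Lemma \ref{lem 5.4} gives $O_{\varphi_j}(K_i,\overline{\rm L}_{\varphi_j}K_i) \to O_\varphi(K,\overline{\rm L}_\varphi K)$.

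First I would set $\lambda_{i,j}:=O_{\varphi_j}(K_i,\overline{\rm L}_{\varphi_j}K_i)$ and $\lambda_0:=O_\varphi(K,\overline{\rm L}_\varphi K)$, both of which lie in a compact subinterval of $(0,\infty)$ by Lemma \ref{lem 3.6} and Lemma \ref{lem 5.1}. Then ${\rm L}_{\varphi_j}K_i = \lambda_{i,j}\,\overline{\rm L}_{\varphi_j}K_i$ and ${\rm L}_\varphi K = \lambda_0\,\overline{\rm L}_\varphi K$. Since all the ellipsoids $\overline{\rm L}_{\varphi_j}K_i$ and $\overline{\rm L}_\varphi K$ have volume $\omega_n$, Lemma \ref{lem 2.4} lets me pass freely between the Hausdorff metric and the dual Hausdorff metric.

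Next I would estimate the Hausdorff distance between $\lambda_{i,j}\overline{\rm L}_{\varphi_j}K_i$ and $\lambda_0\overline{\rm L}_\varphi K$ via support functions:
\[
\delta_H\!\bigl(\lambda_{i,j}\overline{\rm L}_{\varphi_j}K_i,\,\lambda_0\overline{\rm L}_\varphi K\bigr)
\le |\lambda_{i,j}-\lambda_0|\cdot d\!\bigl(\overline{\rm L}_{\varphi_j}K_i\bigr) + \lambda_0\,\delta_H\!\bigl(\overline{\rm L}_{\varphi_j}K_i,\overline{\rm L}_\varphi K\bigr).
\]
The diameters $d(\overline{\rm L}_{\varphi_j}K_i)$ are uniformly bounded by Lemma \ref{lem 5.2}, so the first term vanishes by Lemma \ref{lem 5.4}, while the second vanishes by Lemma \ref{lem 5.5}. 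Hence $\delta_H({\rm L}_{\varphi_j}K_i,{\rm L}_\varphi K)\to 0$, which, converted back through Lemma \ref{lem 2.4} if desired (noting that the common ambient volume bound follows from $\lambda_{i,j}\to\lambda_0$), yields the claimed convergence.

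I do not expect a genuine obstacle here: the substantive analytic work, namely the compactness needed to control the ellipsoids and the continuity of $O_\varphi$ in $(K,L,\varphi)$, has been done in Lemmas \ref{lem 5.1}--\ref{lem 5.5}. The only care needed is to keep track of which metric (Hausdorff versus dual Hausdorff) one is using, but Lemma \ref{lem 2.4} renders this a formality once the scaling factors are shown to converge.
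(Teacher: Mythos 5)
Your proposal is correct and is essentially the paper's own argument: the paper likewise deduces the theorem from Lemma \ref{lem 5.4}, Lemma \ref{lem 5.5} and the identity ${\rm L}_\varphi K = O_\varphi(K,\overline{\rm L}_\varphi K)\,\overline{\rm L}_\varphi K$ of (\ref{representations of O L ellipsoids}). You merely make explicit the triangle-inequality estimate and the metric bookkeeping that the paper compresses into ``the desired limit is immediately derived.''
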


\begin{proof}
From Lemma \ref{lem 5.4}, Lemma \ref{lem 5.5}, together with the identity
\[{{\rm L}_\varphi }K = {O_\varphi }(K,{\overline {\rm L} _\varphi }K){\overline {\rm L} _\varphi }K,\]
the desired limit is immediately derived.
\end{proof}

From Theorem \ref{thm 5.6}, several corollaries are derived directly.
\begin{corollary}
Suppose $K\in {\mathcal S}^n_o$ and $\varphi,\varphi_j\in\Phi$, $j\in\mathbb N$. If  $\varphi_j\to\varphi$, then
\[\mathop {\lim }\limits_{j \to \infty } {{\rm L}_{{\varphi _j}}}{K} = {{\rm L}_\varphi }K.\]
\end{corollary}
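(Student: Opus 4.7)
The plan is essentially to specialize Theorem \ref{thm 5.6} to the case where only the Orlicz function varies and the star body is held fixed. Theorem \ref{thm 5.6} already delivers joint continuity of ${\rm L}_\varphi K$ in the pair $(\varphi, K)$, so continuity in $\varphi$ alone is obtained by freezing the star body.

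Concretely, given the hypotheses $K \in \mathcal{S}^n_o$ and $\varphi_j \to \varphi$ in $\Phi$, I would set $K_i := K$ for every $i \in \mathbb{N}$. Then trivially $K_i \to K$ with respect to the dual Hausdorff metric $\tilde\delta_H$, since the sequence is constant. The hypotheses of Theorem \ref{thm 5.6} are thus verified, and its conclusion reads
\[
\lim_{i,j \to \infty} {\rm L}_{\varphi_j} K_i \;=\; {\rm L}_\varphi K,
\]
which, upon substituting $K_i = K$, collapses to
\[
\lim_{j \to \infty} {\rm L}_{\varphi_j} K \;=\; {\rm L}_\varphi K,
\]
as desired.

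There is no real obstacle here: the work was done in Lemmas \ref{lem 5.1}--\ref{lem 5.5} and Theorem \ref{thm 5.6}, and the corollary is just the diagonal specialization. The only thing worth noting is that the joint convergence statement in Theorem \ref{thm 5.6} is stated as a limit in the index pair $(i,j) \to \infty$, but because the $i$-index is rendered inert by choosing $K_i = K$, this is literally the same as taking $j \to \infty$ alone; no separate argument (e.g.\ Moore--Osgood-type exchange of limits) is required.
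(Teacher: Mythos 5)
Your specialization of Theorem \ref{thm 5.6} by taking $K_i = K$ constant is exactly the intended argument; the paper states these corollaries are ``derived directly'' from that theorem and gives no separate proof. Correct, and the same approach.
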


\begin{corollary}
Suppose $K,K_i\in {\mathcal S}^n_o$ and $\varphi\in\Phi$, $i\in\mathbb N$. If  $\varphi_i\to\varphi$, then
\[\mathop {\lim }\limits_{i \to \infty } {{\rm L}_\varphi}{K_i} = {{\rm L}_\varphi }K.\]
\end{corollary}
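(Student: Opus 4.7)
The statement to prove is the one-variable continuity of ${\rm L}_\varphi$ in $K$; note that the hypothesis should read $K_i \to K$ (the symbol $\varphi_i$ in the given hypothesis does not appear elsewhere in the statement, so this is evidently a typographical slip for the sequence of bodies).

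The plan is to reduce directly to the already-established joint continuity result, Theorem~\ref{thm 5.6}. Specifically, I would take the constant sequence $\varphi_j := \varphi$ for every $j\in\mathbb{N}$. This sequence trivially satisfies $\varphi_j \to \varphi$ in the sense of convergence in $\Phi$ defined in Section~2, since $|\varphi_j - \varphi|_I = 0$ for every compact interval $I\subset [0,\infty)$.

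With this choice and the hypothesis $K_i\to K$ in $(\mathcal{S}^n_o, \tilde\delta_H)$, Theorem~\ref{thm 5.6} applies and yields
\[
\lim_{i\to\infty} {\rm L}_\varphi K_i = \lim_{i\to\infty} {\rm L}_{\varphi_i} K_i = {\rm L}_\varphi K,
\]
which is exactly the desired conclusion.

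Since this corollary is purely an instantiation of the jointly continuous limit, there is no genuine obstacle to overcome: all the compactness arguments (Lemmas~\ref{lem 5.1}--\ref{lem 5.2}), the uniform convergence of $O_{\varphi_j}(K_i, \cdot)$ on the bounded family $\mathcal{E}_R$ (Lemma~\ref{lem 5.3}), and the uniqueness-of-minimizer argument (Lemma~\ref{lem 5.5}) have been carried out in the proof of Theorem~\ref{thm 5.6}. The only thing worth verifying explicitly is that the constant sequence $\varphi_j \equiv \varphi$ is admissible in the convergence notion for $\Phi$, which is immediate.
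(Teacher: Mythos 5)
Your proof is correct and is essentially the paper's intended argument: the paper states that the corollaries are "derived directly" from Theorem~\ref{thm 5.6}, and specializing to the constant sequence $\varphi_j\equiv\varphi$ is exactly the expected instantiation. You are also right that the hypothesis in the corollary as printed is a typographical slip for $K_i\to K$.
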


\begin{corollary}
Suppose $K\in\mathcal{S}^n_o$ and $\varphi\in\Phi$. Then ${\rm
L}_{\varphi^p}K$ is continuous in $p\in[1,\infty)$.
\end{corollary}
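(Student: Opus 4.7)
The plan is to reduce this continuity statement to Theorem 5.6 (or more precisely, to its first corollary, which fixes $K$ and varies the Orlicz function). Fix $K\in\mathcal{S}^n_o$ and $p_0\in[1,\infty)$, and take an arbitrary sequence $p_j\to p_0$. I want to show that ${\rm L}_{\varphi^{p_j}}K\to {\rm L}_{\varphi^{p_0}}K$, and by the corollary of Theorem 5.6 it suffices to verify that $\varphi^{p_j}\to\varphi^{p_0}$ in the sense of the paper, namely uniformly on every compact interval $I\subset[0,\infty)$.

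First I would check that each $\varphi^{p}$ lies in $\Phi$ for $p\ge 1$. Strict monotonicity and $\varphi^p(0)=0$ are immediate from the corresponding properties of $\varphi$. Convexity follows because $\varphi$ is convex and nonnegative, and $s\mapsto s^p$ is convex and increasing on $[0,\infty)$, so the composition $t\mapsto\varphi(t)^p$ is convex. Thus $\varphi^{p_j},\varphi^{p_0}\in\Phi$.

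Next I would verify the uniform-on-compacta convergence. Given a compact interval $I\subset[0,\infty)$, continuity of $\varphi$ gives a constant $M$ with $\varphi(I)\subset[0,M]$. Since $p_j$ is eventually confined to some compact interval $J\subset[1,\infty)$ containing $p_0$, the map $(s,p)\mapsto s^p$ is uniformly continuous on $[0,M]\times J$. Combined with the already-established bound $\varphi(t)\in[0,M]$ for $t\in I$, this gives
\[
\max_{t\in I}\bigl|\varphi(t)^{p_j}-\varphi(t)^{p_0}\bigr|\longrightarrow 0,
\]
which is exactly the $\Phi$-convergence $\varphi^{p_j}\to\varphi^{p_0}$.

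With this in hand, the first corollary following Theorem 5.6 (taking $K_i=K$ constant and $\varphi_j=\varphi^{p_j}$) yields ${\rm L}_{\varphi^{p_j}}K\to {\rm L}_{\varphi^{p_0}}K$, and since $p_j$ was an arbitrary sequence tending to $p_0$, we conclude that $p\mapsto {\rm L}_{\varphi^p}K$ is continuous on $[1,\infty)$. There is no real obstacle here: the substantive work has already been done in establishing joint continuity in $(K,\varphi)$, and the only nontrivial point is the elementary verification that raising a fixed $\Phi$-function to a variable power is continuous in that power, uniformly on compact $t$-intervals.
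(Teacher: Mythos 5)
Your proposal is correct and follows exactly the route the paper intends: the paper derives this corollary "directly" from Theorem 5.6, and the only content is the routine verification you supply, namely that $\varphi^p\in\Phi$ for $p\ge 1$ and that $p_j\to p_0$ forces $\varphi^{p_j}\to\varphi^{p_0}$ uniformly on compact intervals. Your argument for both points is sound, so this matches the paper's approach with the implicit details filled in.
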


\begin{corollary}
The $L_p$ Legendre ellipsoid ${\rm L}_pK$ is continuous in
$(K,p)\in{\mathcal S}^n_o\times [1,\infty)$.
\end{corollary}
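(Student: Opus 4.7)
The plan is to reduce this corollary directly to Theorem \ref{thm 5.6} by choosing the correct sequence of Orlicz functions. Suppose $(K_i,p_i)\to(K,p)$ in ${\mathcal S}^n_o\times[1,\infty)$, and set $\varphi_i(t)=t^{p_i}$ and $\varphi(t)=t^p$. Each $\varphi_i$ and $\varphi$ clearly belongs to $\Phi$, since $t\mapsto t^q$ for $q\ge 1$ is convex, strictly increasing on $[0,\infty)$, and vanishes at $0$. By Definition \ref{def 4.9} (and the remark that ${\rm L}_{\varphi}K={\rm L}_p K$ when $\varphi(t)=t^p$), we have ${\rm L}_{p_i}K_i={\rm L}_{\varphi_i}K_i$ and ${\rm L}_p K={\rm L}_\varphi K$. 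So, to conclude ${\rm L}_{p_i}K_i\to {\rm L}_p K$, it suffices by Theorem \ref{thm 5.6} to verify that $\varphi_i\to\varphi$ in the sense specified in Section 2, i.e., $|\varphi_i-\varphi|_I\to 0$ for each compact interval $I\subset[0,\infty)$.

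Next I would carry out this verification. Fix a compact interval $I=[0,M]$. The function $(t,q)\mapsto t^q$ is jointly continuous on $[0,M]\times[1,\infty)$ (continuity at $t=0$ follows since $0^q=0$ for $q\ge 1$, and for $t>0$ use $t^q=e^{q\log t}$). On the compact set $[0,M]\times J$, where $J$ is any closed bounded interval containing $\{p_i\}\cup\{p\}$ (which exists since $p_i\to p$ so the sequence is bounded), it is uniformly continuous. Hence $\sup_{t\in[0,M]}|t^{p_i}-t^p|\to 0$ as $p_i\to p$, which is exactly the required convergence $\varphi_i\to\varphi$ in $\Phi$.

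Having established $K_i\to K$ and $\varphi_i\to\varphi$, Theorem \ref{thm 5.6} applies and yields ${\rm L}_{\varphi_i}K_i\to {\rm L}_\varphi K$, i.e., ${\rm L}_{p_i}K_i\to {\rm L}_p K$. Since this works for every sequence $(K_i,p_i)\to(K,p)$, joint continuity of ${\rm L}_p K$ on ${\mathcal S}^n_o\times[1,\infty)$ (with respect to the product of the dual Hausdorff metric $\tilde\delta_H$ and the standard topology on $[1,\infty)$) is established.

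The proof is essentially a one-line invocation of Theorem \ref{thm 5.6}; there is no genuine obstacle. The only minor point requiring attention is the uniform-on-compacta convergence $t^{p_i}\to t^p$, which is standard and handled by joint continuity of the power function on a compact rectangle. No specific properties of star bodies beyond those already encoded in Theorem \ref{thm 5.6} are needed.
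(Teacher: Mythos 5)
Your proof is correct and follows the paper's intended route: the paper derives this corollary directly from Theorem \ref{thm 5.6}, and you supply the one small verification the paper leaves implicit, namely that $p_i\to p$ implies $t^{p_i}\to t^p$ uniformly on compact subintervals of $[0,\infty)$, so that $\varphi_i\to\varphi$ in the sense required for Theorem \ref{thm 5.6}.
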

We observe that although Yu et.al  \cite{Yu} firstly introduced the notion of $L_p$ Legendre ellipsoids,
they did not consider the above continuity at all.

\vskip 25pt
\section{\bf A common limit position}
\vskip 10pt

As Corollary 5.9 claims, for  any $K\in\mathcal{S}^n_o$ and  $\varphi\in\Phi$,
the Orlicz-Legendre ellipsoid ${\rm L}_{\varphi^p}K$ is continuous in $p\in[0,\infty)$.
In this section, we show that  as $p\to \infty$,
${\rm L}_{\varphi^p}K$  approaches to a new ellipsoid ${\rm L}_\infty K$, which is defined by the following.
\begin{definition}\label{def 6.1}
For  $K\in{\mathcal S}^n_o$,  the ellipsoid ${\rm L}_\infty K$  is defined by
\[{\rm L}_\infty K=\left({\rm E}_\infty \left({\rm conv } K\right)^*\right)^* .\]
\end{definition}

Here, ${\rm conv} K$ denotes the convex hull of $K$. Write ${\rm \overline L}_\infty K$ for its normalization, i.e.,
\[ {\rm \overline L}_\infty K =\left(\frac{\omega_n}{V({\rm L}_\infty K)}\right)^{\frac{1}{n}}{\rm L}_\infty K. \]

The following two theorems show a fundamental feature  of $\rm L_\infty K$ and ${\rm \overline L}_\infty K$.

\begin{theorem}\label{thm 6.2}
Suppose $K\in{\mathcal S}^n_o$.  Amongst all origin-symmetric ellipsoids that contain  $K$, the ellipsoid
${\rm L}_\infty K$ is the unique one with minimal volume.
\end{theorem}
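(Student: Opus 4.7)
The plan is to use polar duality to convert the problem of finding the minimal volume origin-symmetric circumscribed ellipsoid of $K$ into the problem of finding the maximal volume origin-symmetric ellipsoid inscribed in $L := ({\rm conv}\,K)^{*}$, and then invoke the uniqueness of the Orlicz-John ellipsoid ${\rm E}_\infty L$ established in \cite{ZouXiong}. Since $K \in \mathcal{S}^n_o$ one has $o \in {\rm int}({\rm conv}\,K)$ with ${\rm conv}\,K$ compact, so ${\rm conv}\,K \in \mathcal{K}^n_o$ and hence $L \in \mathcal{K}^n_o$. By Definition \ref{def 6.1}, ${\rm L}_\infty K = ({\rm E}_\infty L)^{*}$.

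First I would set up the duality correspondence. Polarity is an inclusion-reversing involution on $\mathcal{E}^n$, because $(TB)^{*} = T^{-t} B \in \mathcal{E}^n$ for $TB \in \mathcal{E}^n$, and it satisfies $V(E)V(E^{*}) = \omega_n^2$. Since every $E \in \mathcal{E}^n$ is convex, $K \subseteq E$ is equivalent to ${\rm conv}\, K \subseteq E$; by (\ref{polarity1})--(\ref{polarity2}) and the bipolar theorem applied to ${\rm conv}\, K \in \mathcal{K}^n_o$, this in turn is equivalent to $E^{*} \subseteq ({\rm conv}\,K)^{*} = L$. Therefore
\[
\min\bigl\{ V(E) : E \in \mathcal{E}^n,\; K \subseteq E \bigr\} \;=\; \frac{\omega_n^2}{\max\bigl\{ V(F) : F \in \mathcal{E}^n,\; F \subseteq L \bigr\}},
\]
and $E_0$ attains the minimum on the left if and only if $E_0^{*}$ attains the maximum on the right.

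Second, I would invoke the characterization of ${\rm E}_\infty L$ from \cite{ZouXiong}: among all origin-symmetric ellipsoids contained in $L$, the ellipsoid ${\rm E}_\infty L$ is the unique one of maximal volume. Combined with the bijection above, this forces $({\rm E}_\infty L)^{*} = {\rm L}_\infty K$ to be the unique origin-symmetric ellipsoid of minimal volume containing $K$, which is precisely the claim of the theorem.

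There is no substantial obstacle: the theorem is essentially a restatement of Definition \ref{def 6.1} through polar duality. The only items requiring verification are the bipolar identity $({\rm conv}\,K)^{**} = {\rm conv}\,K$, which holds because ${\rm conv}\,K$ is a closed convex set with $o$ in its interior, and the unique existence of ${\rm E}_\infty L$ as the maximal volume origin-symmetric ellipsoid inscribed in $L$, which is imported from the prior paper \cite{ZouXiong}. The harder theorem underlying the argument is therefore the John-type uniqueness for ${\rm E}_\infty$, which is not proved here but used as a black box.
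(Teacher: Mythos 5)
Your proposal is correct and follows essentially the same route as the paper: both pass from $K\subseteq E$ to ${\rm conv}\,K\subseteq E$ to $E^{*}\subseteq({\rm conv}\,K)^{*}$, use $V(E)V(E^{*})=\omega_n^{2}$ to convert the minimization into a maximization over inscribed ellipsoids, and then import the uniqueness of the Orlicz-John ellipsoid ${\rm E}_\infty$ from \cite{ZouXiong}. The only difference is that you make the bipolar step explicit, which the paper leaves implicit.
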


For a convex body $K\in\mathcal{K}^n_o$, if the John point of $K^*$ is
at the origin, then $({\rm L}_\infty K)^*$ is precisely the John ellipsoid ${\rm J}(K^*)$
of $K^*$. If $K$ is an origin-symmetric star body in $\mathbb R^n$,
then ${\rm L}_\infty K$ is precisely the L$\rm{\ddot{o}}$wner ellipsoid of $K$.

\begin{proof}
First, observe that for $E\in\mathcal{E}^n$,
\[ K\subseteq E\quad \Longleftrightarrow\quad {\rm conv} K\subseteq E. \]
Indeed, if $K\subseteq E$, then the fact ${\rm conv} E=E$ yields the
inclusion ${\rm conv} K\subseteq E$; conversely, if ${\rm conv} K\subseteq E$, 
then the fact $K\subseteq {\rm conv} K$ yields the inclusion $K\subseteq E$.

Note that ${\rm conv} K\in {\mathcal K}^n_o$. So, for $E\in\mathcal{E}^n$, it holds
\[{\rm conv} K\subseteq E \quad\Longleftrightarrow\quad E^* \subseteq  \left({\rm conv} K \right)^*. \]
From this equivalence and the fact $V(E)V(E^*)=\omega_n^2$, we can reformulate the extremal problem
\[\min \left\{ {V(E):E \in {{\mathcal E}^n}\;{\rm{and}}\;{\rm{conv}}K \subseteq E} \right\}\]
equivalently as
\[\max \left\{ {V({E^*}):E \in {{\mathcal E}^n}\;{\rm{and}}\;{E^*} \subseteq {{({\rm{conv}}K)}^*}} \right\}.\]

Recall that the John ellipsoid ${\rm E}_\infty \left({\rm conv } K\right)^*$ \cite{LYZ6, ZouXiong} is
the unique solution to the above maximization problem. Since ${\rm E}_\infty \left({\rm conv } K\right)^*$
is the unique origin-symmetric ellipsoid of maximal volume contained in the convex body $\left({\rm conv } K\right)^*$,
we know that $\left({\rm E}_\infty \left({\rm conv } K\right)^*\right)^*$ is the unique ellipsoid of minimal volume
containing ${\rm conv} K$.
\end{proof}

\begin{theorem}\label{thm 6.3}
Suppose $K\in\mathcal {S}^n_o$. Amongst all origin-symmetric ellipsoids $E$, the ellipsoid ${\rm \overline L}_\infty K$ uniquely solves the constrained minimization problem
\[\mathop {\min }\limits_E {\left\| {\frac{{{\rho _K}}}{{{\rho _E}}}}
\right\|_\infty }\quad{\rm{subject}}\;{\rm{to}}\quad V(E)\le\omega_n. \]
\end{theorem}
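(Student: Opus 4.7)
The plan is to reduce Theorem 6.3 to Theorem 6.2 via an elementary scaling argument. The key observation is the geometric reinterpretation of the sup-norm: for any $E \in \mathcal{E}^n$ and $K \in \mathcal{S}^n_o$,
\[
\left\|\frac{\rho_K}{\rho_E}\right\|_\infty = \max_{u \in S^{n-1}} \frac{\rho_K(u)}{\rho_E(u)} = \inf\{\lambda > 0 : \rho_K \le \lambda \rho_E \text{ on } S^{n-1}\} = \inf\{\lambda > 0 : K \subseteq \lambda E\}.
\]
Thus the quantity $\|\rho_K/\rho_E\|_\infty$ measures the smallest dilation factor of $E$ needed to engulf $K$.

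First I would show that the constraint is active, i.e.\ the minimum is attained at $V(E) = \omega_n$. Indeed, if $V(E) < \omega_n$, then the dilate $E' = (\omega_n/V(E))^{1/n} E$ has $\rho_{E'} > \rho_E$ pointwise and hence strictly decreases $\|\rho_K/\rho_E\|_\infty$. So Problem $\min \|\rho_K/\rho_E\|_\infty$ subject to $V(E) \le \omega_n$ is equivalent to the same problem under the equality constraint $V(E) = \omega_n$.

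Next I would set up the bijection between the constrained problem and the problem of Theorem 6.2. Given $E \in \mathcal{E}^n$ with $V(E) = \omega_n$ and $\mu := \|\rho_K/\rho_E\|_\infty$, the dilate $F := \mu E$ lies in $\mathcal{E}^n$, contains $K$, and has $V(F) = \mu^n \omega_n$, so $\mu = (V(F)/\omega_n)^{1/n}$. Conversely, given any $F \in \mathcal{E}^n$ with $K \subseteq F$, the rescaling $E := (\omega_n/V(F))^{1/n} F$ satisfies $V(E) = \omega_n$ and
\[
\left\|\frac{\rho_K}{\rho_E}\right\|_\infty = \left(\frac{V(F)}{\omega_n}\right)^{1/n} \left\|\frac{\rho_K}{\rho_F}\right\|_\infty \le \left(\frac{V(F)}{\omega_n}\right)^{1/n}.
\]
Hence minimizing $\|\rho_K/\rho_E\|_\infty$ over $\{E\in\mathcal{E}^n : V(E)=\omega_n\}$ is equivalent to minimizing $V(F)$ over $\{F \in \mathcal{E}^n : K \subseteq F\}$, and the bijection preserves uniqueness.

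Finally, Theorem 6.2 asserts that the latter problem has a unique solution, namely $F = {\rm L}_\infty K$. Transporting back through the bijection, the unique minimizer of the original problem is
\[
E = \left(\frac{\omega_n}{V({\rm L}_\infty K)}\right)^{1/n} {\rm L}_\infty K = \overline{{\rm L}}_\infty K,
\]
by the very definition of $\overline{{\rm L}}_\infty K$ following Definition 6.1. No step here requires serious work; the only point of care is verifying that the bijection $E \leftrightarrow F = \|\rho_K/\rho_E\|_\infty E$ is well-defined on the appropriate domains and that strict monotonicity of the objective in the volume ensures active constraints, so uniqueness transfers cleanly from Theorem 6.2.
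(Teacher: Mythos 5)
Your proof is correct and follows essentially the same route as the paper's: both reduce the normalized sup-norm minimization to the L\"owner problem of Theorem 6.2 via the scaling correspondence $E \mapsto \left\|\rho_K/\rho_E\right\|_\infty E$, using $V\left(\left\|\rho_K/\rho_E\right\|_\infty E\right) = \left\|\rho_K/\rho_E\right\|_\infty^n V(E)$. The paper packages this as an ``existence, then uniqueness'' two-step argument with chains of inequalities, whereas you phrase it as a bijection of optimization problems, but the underlying mathematics is the same; the only small imprecision is calling the map a ``bijection'' onto all of $\{F : K \subseteq F\}$ (the image is only the ellipsoids with $\|\rho_K/\rho_F\|_\infty = 1$), though this does not affect the conclusion since the optimal $F$ necessarily touches $K$.
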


\begin{proof}
The proof will be complete after two steps.

First, we show that the ellipsoid ${\rm \overline L}_\infty K$ solves the desired extremal problem.

Let $E\in {\mathcal E}^n$ with $V(E)\le\omega_n$. From the identity
$\left\| \rho_K/\rho_{\|\rho_K/\rho_E\|_\infty E} \right\|_\infty=1$
and the implication
\[{\left\| \frac{\rho _K}{\rho _L} \right\|_\infty } = 1\quad \Longrightarrow\quad L \supseteq K,\quad {\rm for}\;L \in S_o^n,\]
it follows that $\left\|\rho_K/\rho_E \right\|_\infty E\supseteq K$.
Thus, by Theorem \ref{thm 6.2},
\[V\left( \left\|\rho_K/\rho_E \right\|_\infty E  \right) \ge V(\rm L _\infty K).\]
From this inequality, the assumption that $V(E)\le \omega_n$, the fact that $\|\rho_K/\rho_{\rm L_\infty K}\|_\infty = 1$,  and finally the
definition of ${\rm \overline L}_\infty K$, it follows that
\begin{align*}
{\left\| {\frac{{{\rho _K}}}{{{\rho _E}}}} \right\|_\infty } &\ge {\left( {\frac{{V({\rm L_\infty }K)}}{{V(E)}}} \right)^{\frac{1}{n}}} \\
&\ge {\left( {\frac{{V({\rm L_\infty }K)}}{{{\omega _n}}}} \right)^{\frac{1}{n}}} \\
&= {\left\| {\frac{{{\rho _K}}}{{{\rho _{{\rm L_\infty }K}}}}} \right\|_\infty }{\left( {\frac{{V({\rm L_\infty }K)}}{{{\omega _n}}}} \right)^{\frac{1}{n}}} \\
&= {\left\| {\frac{{{\rho _K}}}{{{\rho _{{{\left( {\frac{{{\omega _n}}}{{V({\rm L_\infty }K)}}} \right)}^{\frac{1}{n}}}{\rm L_\infty }K}}}}} \right\|_\infty } \\
&= {\left\| {\frac{{{\rho _K}}}{{{\rho _{{{\rm \overline L }_\infty }K}}}}} \right\|_\infty}.
\end{align*}
That is,
\[{\left\| {\frac{{{\rho _K}}}{{{\rho _E}}}} \right\|_\infty } \ge {\left\| {\frac{{{\rho _K}}}{{{\rho _{{{\rm \overline L }_\infty }K}}}}} \right\|_\infty },\]
which implies that ${\rm \overline L}_\infty K$ is a solution to the desired extremal problem.

Assume that $E_0$ is a solution to the considered extremal problem. Now,  we aim to show that
${\left\| {\frac{{{\rho _K}}}{{{\rho _{{E_0}}}}}} \right\|_\infty }{E_0}$ is an
origin-symmetric ellipsoid of minimal volume containing $K$.
If so, according to the uniqueness of $\rm L_\infty K$, we  obtain that ${\rm \overline L}_\infty K$ is the unique solution
to the considered problem.

Let $E'\in\mathcal{E}^n$ with $K\subseteq E'$. From the facts that
$1 \ge {\left\|\rho_K/\rho_{E'} \right\|_\infty }$ and $V(E_0)=\omega_n$, it follows that
\[V(E') \ge \left\| {\frac{{{\rho _K}}}{{{\rho _E}'}}} \right\|_\infty ^nV(E') = \frac{{V({E_0})}}{{{\omega _n}}}\left\| {\frac{{{\rho _K}}}{{{\rho _{E'}}}}} \right\|_\infty ^nV(E') = \left\| {\frac{{{\rho _K}}}{{{\rho _{{{\left( {\frac{{{\omega _n}}}{{V(E')}}} \right)}^{\frac{1}{n}}}E'}}}}} \right\|_\infty ^nV({E_0}).\]
Especially, we have
\[V(E') \ge \left\| {\frac{{{\rho _K}}}{{{\rho _{{E_0}}}}}} \right\|_\infty ^nV({E_0}) = V\left( {{{\left\| {\frac{{{\rho _K}}}{{{\rho _{{E_0}}}}}} \right\|}_\infty }{E_0}} \right).\]
Note that $K \subseteq {\left\| {\frac{{{\rho _K}}}{{{\rho _{{E_0}}}}}} \right\|_\infty }{E_0}$. 
Thus, the ellipsoid ${\left\| {\frac{{{\rho _K}}}{{{\rho _{{E_0}}}}}} \right\|_\infty }{E_0}$ is
an origin-symmetric ellipsoid of minimal volume containing $K$.
\end{proof}

Now, we turn to the main result in this section.
\begin{theorem}\label{thm 6.4}
Suppose $K\in\mathcal{S}^n_o$ and $\varphi\in\Phi$. Then 
$\mathop {\lim }\limits_{p \to \infty } {\rm L}_{{\varphi ^p}}K = {\rm L_\infty }K$.
\end{theorem}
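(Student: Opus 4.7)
The plan is to pass to the limit $p\to\infty$ inside the variational characterization of $\overline{\rm L}_{\varphi^p}K$ from Definition \ref{def 4.9}, identify the limit via the characterization of $\overline{\rm L}_\infty K$ in Theorem \ref{thm 6.3}, and then reconstruct ${\rm L}_{\varphi^p}K$ via identity (\ref{representations of O L ellipsoids}).

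First I would show that $\{\overline{\rm L}_{\varphi^p}K:p\ge 1\}$ lies in a single compact subset $\mathcal{K}\subset\mathcal{E}^n$ that also contains $\overline{\rm L}_\infty K$. Fix $r_m,r_M>0$ with $r_m B\subseteq K\subseteq r_M B$. Re-running the proof of Lemma \ref{lem 5.1} with $\varphi^p$ in place of $\varphi$ yields
\[
d({\rm L}_{\varphi^p}^*K)\le \frac{n\omega_n}{2r_m\omega_{n-1}}\,\varphi^{-1}\!\left((r_M/r_m)^{n/p}\varphi(1)\right),
\]
which stays bounded as $p\to\infty$ because $(r_M/r_m)^{n/p}\to 1$. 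Combining inequality (\ref{diameter ineq}) with Lemma \ref{lem 2.2} transfers this to a uniform bound on $d(\overline{\rm L}_{\varphi^p}K)$; since every $\overline{\rm L}_{\varphi^p}K$ has volume $\omega_n$, the family also enjoys a uniform positive lower bound on its principal radii, so it (together with $\overline{\rm L}_\infty K$) is contained in a compact $\mathcal{K}\subset\mathcal{E}^n$.

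Next I would establish uniform convergence $O_{\varphi^p}(K,\cdot)\to\|\rho_K/\rho_{(\cdot)}\|_\infty$ on $\mathcal{K}$. By Lemma \ref{lem 3.9} parts (3)--(4), $p\mapsto O_{\varphi^p}(K,E)$ is monotone increasing and converges pointwise to $\|\rho_K/\rho_E\|_\infty$ at each $E\in\mathcal{K}$; Lemma \ref{lem 3.8} provides continuity of $O_{\varphi^p}(K,\cdot)$, and the pointwise limit is also continuous on $\mathcal{K}$, so Dini's theorem supplies the uniform convergence. Now let $\overline{\rm L}_{\varphi^{p_j}}K\to E_0$ along any subsequence. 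Testing the minimizing property of $\overline{\rm L}_{\varphi^{p_j}}K$ from Definition \ref{def 4.9} against the competitor $\overline{\rm L}_\infty K$ gives
\[
O_{\varphi^{p_j}}(K,\overline{\rm L}_{\varphi^{p_j}}K)\le O_{\varphi^{p_j}}(K,\overline{\rm L}_\infty K).
\]
The right side tends to $\|\rho_K/\rho_{\overline{\rm L}_\infty K}\|_\infty$ by pointwise convergence; uniform convergence combined with continuity of the limit forces the left side to $\|\rho_K/\rho_{E_0}\|_\infty$. Thus $E_0$ is a minimizer of $E\mapsto\|\rho_K/\rho_E\|_\infty$ under $V(E)=\omega_n$, so the uniqueness clause of Theorem \ref{thm 6.3} yields $E_0=\overline{\rm L}_\infty K$; the usual subsequence argument on the compact $\mathcal{K}$ then gives $\overline{\rm L}_{\varphi^p}K\to\overline{\rm L}_\infty K$.

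Finally, by identity (\ref{representations of O L ellipsoids}), ${\rm L}_{\varphi^p}K=O_{\varphi^p}(K,\overline{\rm L}_{\varphi^p}K)\,\overline{\rm L}_{\varphi^p}K$. The scalar factor tends to $\|\rho_K/\rho_{\overline{\rm L}_\infty K}\|_\infty$, which equals $(V({\rm L}_\infty K)/\omega_n)^{1/n}$ since $\|\rho_K/\rho_{{\rm L}_\infty K}\|_\infty=1$ (the minimal containing ellipsoid of Theorem \ref{thm 6.2} touches $K$). Consequently ${\rm L}_{\varphi^p}K\to (V({\rm L}_\infty K)/\omega_n)^{1/n}\,\overline{\rm L}_\infty K={\rm L}_\infty K$, as claimed. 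The main obstacle is the uniform a priori bound of Step~1: Lemma \ref{lem 5.1} is stated for a fixed exponent, and one must carefully track how its constants depend on $p$ and confirm that $(r_M/r_m)^{n/p}\to 1$ really keeps them under control; once this is in hand, monotonicity in Lemma \ref{lem 3.9} unlocks Dini's theorem and the rest of the argument is essentially bookkeeping.
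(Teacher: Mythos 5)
Your proof is correct and follows essentially the same route as the paper's: establish that $\{\overline{\rm L}_{\varphi^p}K : p\ge 1\}$ lies in a compact family, prove uniform convergence of $O_{\varphi^p}(K,\cdot)\to\|\rho_K/\rho_{(\cdot)}\|_\infty$ there, identify every subsequential limit with the unique minimizer from Theorem~\ref{thm 6.3}, and reconstruct ${\rm L}_{\varphi^p}K$ via identity~(\ref{representations of O L ellipsoids}). Two small expository improvements over the paper are worth noting: your explicit tracking of the $p$-dependence in the Lemma~\ref{lem 5.1} bound, $(\varphi^p)^{-1}\left((r_M/r_m)^n\varphi^p(1)\right)=\varphi^{-1}\left((r_M/r_m)^{n/p}\varphi(1)\right)\to 1$, genuinely closes a gap, since the paper simply points back to ``the arguments in Section 5'' although Lemma~\ref{lem 5.2} as stated treats $\varphi_j\to\varphi\in\Phi$ and does not literally cover $\varphi^{p_j}$ with $p_j\to\infty$; and invoking Dini's theorem (legitimate here, thanks to the monotonicity in Lemma~\ref{lem 3.9}~(3) and continuity of both $f_p$ and $f_\infty$) cleanly replaces the paper's ad hoc contradiction argument in Lemma~\ref{lem 6.6}.
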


From the arguments in Section 5, we know that the set $\{ {\rm \overline L}_{\varphi^p}K: 1\le p <\infty\}$ is bounded from above. Hence,
there exists a  constant $C\in (0,\infty)$ such that
\begin{align*}
&\{{\rm \overline L}_{\varphi^p}K: 1\le p <\infty\}\cup\{{\rm \overline L}_\infty K\}\\ &\quad \subseteq \mathcal{F}=\{E\in\mathcal{E}^n: V(E)=\omega_n\;
{\rm and}\; E\subseteq CB\}.
\end{align*}

For $p\in [1,\infty)$, define the functional $f_p: \mathcal{F}\rightarrow (0,\infty)$ by
\[ {f_p}(E) =O_{{\varphi ^p}}(K,E), \quad {\rm for}\;E \in \mathcal{F}, \]
and the functional $f_\infty: \mathcal{F}\rightarrow  (0,\infty)$ by
\[{f_\infty }(E) = {\left\| {\frac{{{\rho _K}}}{{{\rho _E}}}} \right\|_\infty }, \quad {\rm for}\;E \in \mathcal{F}.\]

To prove Theorem \ref{thm 6.4}, several lemmas are in order.

First, applying Lemma \ref{lem 3.9} (4) to the functionals $f_j$ and $f_\infty$ on $\mathcal{F}$, we have
\begin{lemma}\label{lem 6.5}
$\mathop {\lim }\limits_{j \to \infty } {f_j}(E) = {f_\infty }(E), $ for $E\in\mathcal{F}$.
\end{lemma}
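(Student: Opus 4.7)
The proof is essentially an immediate invocation of Lemma \ref{lem 3.9} (4), so the ``plan'' is really just to verify that the hypotheses of that lemma are met for each fixed $E \in \mathcal{F}$ and then translate notation. My plan is to fix an arbitrary $E \in \mathcal{F}$ and apply Lemma \ref{lem 3.9} (4) with $L = E$.

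The only thing to check is that $E$ qualifies as a star body about the origin, i.e., $E \in \mathcal{S}^n_o$. This is immediate: every origin-symmetric ellipsoid of positive volume contains the origin in its interior and has a continuous (in fact smooth) radial function on $S^{n-1}$. With $K \in \mathcal{S}^n_o$ and $\varphi \in \Phi$ already in hand, Lemma \ref{lem 3.9} (4) yields
\[
\lim_{p \to \infty} O_{\varphi^p}(K, E) \;=\; \left\|\frac{\rho_K}{\rho_E}\right\|_\infty.
\]
Unwinding the definitions, the left side is exactly $\lim_{j\to\infty} f_j(E)$ (taking the limit along the integers $j \to \infty$ rather than continuously along $p$, which is a special case) and the right side is $f_\infty(E)$.

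The argument is a pure dictionary translation, so there is essentially no obstacle: the substantive work (monotonicity in $p$, boundedness, and the passage to the sup-norm limit) was already carried out in the proof of Lemma \ref{lem 3.9}. The only mild subtlety worth flagging is that Lemma \ref{lem 3.9} delivers convergence of $O_{\varphi^p}(K,L)$ as the real parameter $p \to \infty$, whereas here we want convergence along the integer sequence $j \to \infty$; but this is automatic since convergence of a net along $p \to \infty$ implies convergence along any subsequence of parameters tending to infinity. Thus the proof will consist of a single sentence identifying $f_j(E) = O_{\varphi^j}(K,E)$ and quoting Lemma \ref{lem 3.9} (4).
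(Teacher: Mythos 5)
Your proof is correct and matches the paper exactly: the paper itself introduces Lemma~\ref{lem 6.5} with the single sentence ``First, applying Lemma~\ref{lem 3.9}~(4) to the functionals $f_j$ and $f_\infty$ on $\mathcal{F}$, we have\dots,'' offering no further argument. Your observation that the lemma is a pure specialization of Lemma~\ref{lem 3.9}~(4) to $L=E\in\mathcal{F}\subset\mathcal{S}^n_o$, with the integer-index limit subsumed by the real-parameter limit, is precisely the intended reasoning.
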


\begin{lemma}\label{lem 6.6}
The limit $\mathop {\lim }\limits_{j \to \infty } {f_j}(E) = {f_\infty }(E)$ is uniform in $E\in\mathcal{F}$.
\end{lemma}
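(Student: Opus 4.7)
The plan is to invoke Dini's monotone convergence theorem on the compact space $\mathcal{F}$. For this, three ingredients are required: (a) compactness of $\mathcal{F}$ in a topology that makes both $f_p$ and $f_\infty$ continuous; (b) the pointwise convergence $f_j(E)\to f_\infty(E)$ for each $E \in \mathcal{F}$; and (c) monotonicity of $p \mapsto f_p(E)$ for each fixed $E$.

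Ingredients (b) and (c) are already in hand. Pointwise convergence is supplied by Lemma \ref{lem 6.5}, and Lemma \ref{lem 3.9}(3) asserts that $O_{\varphi^p}(K,E)$ is nondecreasing in $p \in [1,\infty)$, so the sequence $\{f_j\}$ increases pointwise to $f_\infty$.

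For (a), I would equip $\mathcal{F}$ with the dual Hausdorff metric $\tilde\delta_H$. The constraints $V(E)=\omega_n$ and $E \subseteq CB$ force the principal radii of every $E \in \mathcal{F}$ to lie in a fixed compact interval bounded away from $0$; closedness of $\mathcal{F}$ under $\tilde\delta_H$ is routine since $V$ is $\tilde\delta_H$-continuous on ellipsoids of uniformly bounded diameter. Hence $\mathcal{F}$ is compact. Continuity of $f_p$ on $\mathcal{F}$ follows from Lemma \ref{lem 3.8} applied with fixed $K$ and fixed $\varphi^p$. For $f_\infty$, the uniform positive lower bound on $\rho_E$ over $E \in \mathcal{F}$ guarantees that $E_k \to E_0$ in $\tilde\delta_H$ yields $\rho_K/\rho_{E_k} \to \rho_K/\rho_{E_0}$ uniformly on $S^{n-1}$, and hence $\|\rho_K/\rho_{E_k}\|_\infty \to \|\rho_K/\rho_{E_0}\|_\infty$. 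Lemma \ref{lem 2.4} may be invoked if it is preferable to work in $\delta_H$ rather than $\tilde\delta_H$.

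With (a)--(c) established, Dini's theorem forces the monotone pointwise convergence of continuous functions $f_j \to f_\infty$ on the compact set $\mathcal{F}$ to be uniform, which is exactly the assertion of the lemma. The main obstacle I expect is the compactness step: one needs to pin down the uniform lower bound on the principal radii of members of $\mathcal{F}$ (combining $V(E)=\omega_n$ with the diameter bound $E\subseteq CB$) and to make sure the continuity of $f_p$ derived from Lemma \ref{lem 3.8} transfers to the $\tilde\delta_H$-topology chosen here. The arguments developed in Section 5 already handle essentially the same compactness issue, so this obstacle should be dispatched without further difficulty.
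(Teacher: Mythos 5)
Your proposal is correct and follows essentially the same route as the paper: the paper's proof is precisely a contradiction argument that reproves Dini's theorem in this special case (using monotonicity from Lemma \ref{lem 3.9}(3), compactness of $\mathcal{F}$, pointwise convergence from Lemma \ref{lem 6.5}, and continuity of $f_i$ and $f_\infty$), whereas you simply invoke Dini's theorem by name after verifying the same hypotheses. The only cosmetic difference is that the paper works with $\delta_H$ on $\mathcal{F}$ and you prefer $\tilde\delta_H$, but by Lemma \ref{lem 2.4} the two are interchangeable on this fixed-volume class, so nothing is gained or lost.
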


\begin{proof}
We argue by contradiction and assume the conclusion to be false.
By our assumption, the definitions of $f_{j_k}$ and $f_\infty$
together with Lemma \ref{lem 3.9} (3), there exist an
$\varepsilon_0>0$, a  sequence $\{j_k\}_k$ strictly increasing to
$\infty$, and a sequence $E_k\subset \mathcal{F}$, such that
\[|f_\infty(E_k)-f_{j_k}(E_k)|>\varepsilon_0;\quad {\rm i.e.},\quad {f_{{j_k}}}({E_k}) < {f_\infty }({E_k}) - {\varepsilon _0},\quad {\rm for}\; k\in\mathbb N.\]
Thus, these inequalities together with Lemma \ref{lem 3.9} (3)  yield that
\[{f_i}({E_k}) < {f_\infty }({E_k}) - {\varepsilon _0},\quad {\rm{for}}\;i \le j_k\; {\rm and}\; k\in\mathbb N.\]
Meanwhile, from the compactness of $(\mathcal{F},\delta_H)$ together with Lemma \ref{lem 2.3}, there
exists a convergent subsequence $\{E_{k_l}\}_l$ of $\{E_k\}_k$, which converges to certain $E_0\in\mathcal{F}$.

Consequently, letting $l\to\infty$ in the inequality
\[{f_i}({E_{{k_l}}}) < {f_\infty }({E_{{k_l}}}) - {\varepsilon _0},\quad {\rm{for}}\;i \le k_l\; {\rm and}\; l\in\mathbb N,\]
and using the continuity of $f_i$ and $f_\infty$, we have
\[{f_i}({E_0}) \le {f_\infty }({E_0}) - {\varepsilon _0},\quad {\rm{for}}\;i \in \mathbb N,\]
which contradicts Lemma \ref{lem 6.5}.
\end{proof}

Using Lemma \ref{lem 6.6}, we can prove the following.
\begin{lemma}\label{lem 6.7} $\mathop {\lim }\limits_{p \to \infty } {\rm \overline L} _{{\varphi ^p}}K = {\rm \overline L _\infty }K$.
\end{lemma}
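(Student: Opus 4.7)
The plan is to argue by compactness plus uniform convergence, in the standard variational-analysis style. Since $\overline{L}_{\varphi^p}K$ is the unique minimizer of $f_p$ over $\mathcal{F}$ (by Definition 4.9 together with the inclusion of all $\overline{L}_{\varphi^p}K$ in $\mathcal{F}$), and $\overline{L}_\infty K$ is the unique minimizer of $f_\infty$ over $\mathcal{F}$ (by Theorem 6.3, noting that the constraint $V(E)\le\omega_n$ is saturated at the minimum and that $\overline{L}_\infty K\in\mathcal{F}$), what must be shown is that minimizers of $f_p$ converge to the minimizer of $f_\infty$ as $p\to\infty$.

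First I would invoke the subsequence principle: it suffices to prove that every sequence $p_k\to\infty$ admits a subsequence $p_{k_l}$ with $\overline{L}_{\varphi^{p_{k_l}}}K\to\overline{L}_\infty K$. Fixing such $p_k$, compactness of $\mathcal{F}$ (which is a closed, bounded subset of $(\mathscr{L}^n,d_n)$ via the correspondence $E=TB$, combined with Lemma 2.3 and Lemma 2.4) gives a subsequence with $\overline{L}_{\varphi^{p_{k_l}}}K\to E_0$ for some $E_0\in\mathcal{F}$.

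The next step is to identify $E_0$ as $\overline{L}_\infty K$. For any fixed $E\in\mathcal{F}$ one has the minimality inequality
\[f_{p_{k_l}}(\overline{L}_{\varphi^{p_{k_l}}}K)\le f_{p_{k_l}}(E).\]
On the right-hand side, Lemma 6.5 yields $f_{p_{k_l}}(E)\to f_\infty(E)$. On the left-hand side I would split
\[f_{p_{k_l}}(\overline{L}_{\varphi^{p_{k_l}}}K)-f_\infty(E_0)=\bigl(f_{p_{k_l}}-f_\infty\bigr)(\overline{L}_{\varphi^{p_{k_l}}}K)+\bigl(f_\infty(\overline{L}_{\varphi^{p_{k_l}}}K)-f_\infty(E_0)\bigr);\]
the first summand tends to $0$ by the uniform convergence $f_j\to f_\infty$ on $\mathcal{F}$ established in Lemma 6.6, and the second tends to $0$ by continuity of $f_\infty$ on $\mathcal{F}$ (since $E\mapsto\rho_E$ is continuous with respect to $\tilde\delta_H$ and $\rho_K$ is a fixed strictly positive continuous function on $S^{n-1}$). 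Passing to the limit yields $f_\infty(E_0)\le f_\infty(E)$ for every $E\in\mathcal{F}$, so $E_0$ is a minimizer of $f_\infty$ over $\mathcal{F}$ and hence equals $\overline{L}_\infty K$ by uniqueness.

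The main obstacle I anticipate is not this compactness argument itself but the two auxiliary facts that it rests on: the uniformity of the limit $f_j\to f_\infty$ on $\mathcal{F}$, and the continuity of $f_\infty$. The first is precisely Lemma 6.6. The second follows because, for $E_1,E_2\in\mathcal{F}$, the bound $E_j\subseteq CB$ and Lemma 5.1 imply a uniform lower bound on $\rho_{E_j}$, whence $\rho_K/\rho_{E_1}-\rho_K/\rho_{E_2}\to 0$ uniformly on $S^{n-1}$ whenever $E_1\to E_2$ in $\tilde\delta_H$, and taking supremum preserves this. Once these ingredients are in hand, the proof reduces to the compactness-plus-uniqueness argument sketched above, and Theorem 6.4 then follows by the identity ${\rm L}_{\varphi^p}K=O_{\varphi^p}(K,\overline{L}_{\varphi^p}K)\overline{L}_{\varphi^p}K$ together with Lemma 3.9 (4) and Theorem 6.3.
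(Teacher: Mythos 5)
Your proposal is correct and follows essentially the same route as the paper: extract a convergent subsequence in the compact set $\mathcal{F}$, pass to the limit in the minimality inequality using the pointwise convergence of Lemma 6.5 and the uniform convergence of Lemma 6.6 (the paper packages the uniform-convergence step as a commutation of $\lim$ with $\min$ and a double-limit argument, but the ingredients are identical), and conclude by the uniqueness of $\overline{\rm L}_\infty K$ from Theorem 6.3.
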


\begin{proof}
By the boundedness of $\{{\rm \overline L}_{\varphi^p}K: 1\le p<\infty\}$ in $(\mathcal{F},\delta_H)$,
it suffices to prove that
\[\mathop {\lim }\limits_{j \to \infty } {\rm \overline L} _{{\varphi ^{{p_j}}}}K = {\rm \overline L _\infty }K,\]
for any convergent subsequence $\{{\rm \overline L}_{\varphi^{p_j}}K\}_j$
with $p_j$ strictly increasing to $\infty$.

Assume that  $\mathop {\lim }\limits_{j \to \infty } {\rm \overline L _{{\varphi ^{{p_j}}}}}K = {E_0}$.
From the definition of $f_\infty$,  the continuity of $f_\infty$, Lemma \ref{lem 6.5}, and Lemma \ref{lem 6.6}, we have
\begin{align*}
{\left\| {\frac{{{\rho _K}}}{{{\rho _{{E_0}}}}}} \right\|_\infty } &= {f_\infty }\left( {{E_0}} \right) \\
&= {f_\infty }\left( {\mathop {\lim }\limits_{j \to \infty } {{\rm \overline L }_{{\varphi ^{{p_j}}}}}K} \right) \\
&= \mathop {\lim }\limits_{j \to \infty } {f_\infty }\left( {{{\rm \overline L }_{{\varphi ^{{p_j}}}}}K} \right) \\
&= \mathop {\lim }\limits_{j \to \infty } \mathop {\lim }\limits_{i \to \infty } {f_{{p_i}}}\left( {{{\rm \overline L }_{{\varphi ^{{p_j}}}}}K} \right) \\
&= \mathop {\lim }\limits_{i,j \to \infty } {f_{{p_i}}}\left( {{{\rm \overline L }_{{\varphi ^{{p_j}}}}}K} \right) \\
&= \mathop {\lim }\limits_{j \to \infty } {f_{{p_j}}}\left( {{{\rm \overline L }_{{\varphi ^{{p_j}}}}}K} \right).
\end{align*}

Moreover, from the definition of ${\rm \overline L}_{\varphi^{p_j}}K$
together with the fact that ${\rm \overline L}_{\varphi^{p_j}}K\in {\mathcal F}$, Lemma \ref{lem 6.6} together with the compactness of
$\mathcal{F}$, Lemma \ref{lem 6.5}, and  the definition of $f_\infty$, it follows 
\begin{align*}
{\left\| {\frac{{{\rho _K}}}{{{\rho _{{E_0}}}}}} \right\|_\infty } &= \mathop {\lim }\limits_{j \to \infty } \mathop {\min }\limits_{E \in \mathcal{F}} {f_{{p_j}}}\left( E \right) \\
&= \mathop {\min }\limits_{E \in \mathcal{F}} \mathop {\lim }\limits_{j \to \infty } {f_{{p_j}}}\left( E \right) \\
&= \mathop {\min }\limits_{E \in \mathcal{F}} {f_\infty }\left( E \right) \\
&= \mathop {\min }\limits_{E \in \mathcal{F}} {\left\| {\frac{{{\rho _K}}}{{{\rho _E}}}} \right\|_\infty }.
 \end{align*}
Thus,
\[{\left\| {\frac{{{\rho _K}}}{{{\rho _{{E_0}}}}}} \right\|_\infty } = \mathop {\min }\limits_{E \in \mathcal{F}} {\left\| {\frac{{{\rho _K}}}{{{\rho _E}}}} \right\|_\infty },\]
From the fact that ${\rm \overline L}_\infty K\in\mathcal{F}$ and the uniqueness of ${\rm \overline L}_\infty K$,
it yields that ${{E_0}}={\rm \overline L}_\infty K$. 
\end{proof}

\begin{lemma}\label{lem 6.8}
$\mathop {\lim }\limits_{p \to \infty } O_{\varphi^p}(K,{\rm \overline L}_{\varphi^p}K)= {\left\| {\frac{{{\rho _K}}}{{{\rho _{{{\rm \overline L }_\infty }K}}}}} \right\|_\infty }$.
\end{lemma}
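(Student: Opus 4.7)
The plan is to split the quantity $O_{\varphi^p}(K,\overline{\rm L}_{\varphi^p}K) = f_p(\overline{\rm L}_{\varphi^p}K)$ into two pieces using the triangle inequality
\[
\bigl|f_p(\overline{\rm L}_{\varphi^p}K) - f_\infty(\overline{\rm L}_\infty K)\bigr|
\le \bigl|f_p(\overline{\rm L}_{\varphi^p}K) - f_\infty(\overline{\rm L}_{\varphi^p}K)\bigr|
+ \bigl|f_\infty(\overline{\rm L}_{\varphi^p}K) - f_\infty(\overline{\rm L}_\infty K)\bigr|,
\]
and send each summand to zero using results already at hand.

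For the first summand I would invoke Lemma \ref{lem 6.6}, which gives uniform convergence $f_p \to f_\infty$ on the compact set $\mathcal{F}$. Since $\overline{\rm L}_{\varphi^p}K \in \mathcal{F}$ for every $p\in[1,\infty)$ (this is exactly why $\mathcal{F}$ was introduced at the top of Section 6), the uniform bound applies to the moving point $\overline{\rm L}_{\varphi^p}K$ and hence the first summand tends to $0$ as $p\to\infty$.

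For the second summand I would note that $f_\infty(E)=\|\rho_K/\rho_E\|_\infty$ is continuous on $\mathcal{F}$ with respect to $\tilde{\delta}_H$: indeed, for $E\in\mathcal{F}$ the radial function $\rho_E$ is uniformly bounded below by a positive constant (since $V(E)=\omega_n$ and $E\subseteq CB$ force a lower bound on the smallest principal radius), and if $E_j\to E_0$ in $\tilde\delta_H$ then $\rho_{E_j}\to\rho_{E_0}$ uniformly on $S^{n-1}$, so $\rho_K/\rho_{E_j}\to\rho_K/\rho_{E_0}$ uniformly, whence the $L_\infty$ norms converge. Combining this continuity with Lemma \ref{lem 6.7} (which asserts $\overline{\rm L}_{\varphi^p}K\to\overline{\rm L}_\infty K$) and Lemma \ref{lem 2.4} (so that convergence with respect to $\delta_H$ and $\tilde\delta_H$ coincide on $\mathcal{F}$) yields $f_\infty(\overline{\rm L}_{\varphi^p}K)\to f_\infty(\overline{\rm L}_\infty K)$, so the second summand also vanishes.

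The only point that needs a moment of care is verifying that $\mathcal{F}$ really is the right arena — i.e., that the uniform lower bound on $\rho_E$ needed for continuity of $f_\infty$ holds uniformly on $\mathcal{F}$. This follows from the volume normalization $V(E)=\omega_n$ together with the uniform upper bound $E\subseteq CB$: if the smallest principal radius of some $E\in\mathcal{F}$ were arbitrarily small, the volume constraint would force a principal radius larger than $C$, contradicting $E\subseteq CB$. Once this uniform positivity of $\rho_E$ on $\mathcal{F}$ is in place, the two-summand splitting above delivers the claim directly.
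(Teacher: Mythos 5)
Your proposal is correct and is essentially the same argument as the paper's: the paper compresses the reasoning into the single identity $\lim_{p}f_p(\overline{\rm L}_{\varphi^p}K)=(\lim_p f_p)(\lim_p\overline{\rm L}_{\varphi^p}K)$, and your triangle-inequality split together with the appeal to Lemma \ref{lem 6.6} (uniform convergence) and Lemma \ref{lem 6.7} (convergence of the ellipsoids) is precisely the unpacking of that step. The only superfluous part is the separate verification that $f_\infty$ is continuous on $\mathcal{F}$: this is automatic, since $f_\infty$ is the uniform limit of the continuous functions $f_p$ on the compact metric space $(\mathcal{F},\tilde\delta_H)$.
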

\begin{proof}
From the definition of $f_p$,  Lemma \ref{lem 6.6}, Lemma \ref{lem 6.7}, and the definition of $f_\infty$, it follows that
\begin{align*}
\mathop {\lim }\limits_{p \to \infty } O_{\varphi^p}(K,{\rm \overline L}_{\varphi^p}K)
&= \mathop {\lim }\limits_{p \to \infty } {f_p}\left( {{{\rm \overline L }_{{\varphi ^p}}}K} \right) \\
&= \left( {\mathop {\lim }\limits_{p \to \infty } {f_p}} \right)\left( {\mathop {\lim }\limits_{p \to \infty } {{\rm \overline L }_{{\varphi ^p}}}K} \right) \\
&= {f_\infty }\left( {{{\rm \overline L }_\infty }K} \right) \\
&= {\left\| {\frac{{{\rho _K}}}{{{\rho _{{{\rm \overline L }_\infty }K}}}}} \right\|_\infty},
\end{align*}
as desired.
\end{proof}

Now, we are in the position to finish the proof of  Theorem \ref{thm 6.4}.
\begin{proof}[Proof of Theorem 6.4]
From the identities
\[O_{{\varphi ^p}}(K,{\rm \overline L}_{\varphi^p}K){\rm \overline L} _{{\varphi ^p}}K = {\rm L}_{{\varphi ^p}}K \quad {\rm and}\quad
{\left\| {\frac{{{\rho _K}}}{{{\rho _{{{\rm \overline L }_\infty }K}}}}} \right\|_\infty }{\rm \overline L _\infty }K = {\rm L_\infty }K,\]
together with Lemmas \ref{lem 6.7} and  \ref{lem 6.8}, Theorem \ref{thm 6.4} is derived immediately.
\end{proof}

Note that if $K$ is an origin-symmetric star body in $\mathbb R^n$,
then the Orlicz-Legendre ellipsoid ${\rm L}_{\varphi^p} K$ converges to
the L${\rm \ddot{o}}$wner ellipsoid ${\rm L}K$ as $p\to\infty$.

\vskip 25pt
\section{\bf A Characterization of Orlicz-Legendre ellipsoid}
\vskip 10pt

In this section, we  establish  a  connection linking the characterization  of  
Orlicz-Legendre ellipsoids and the isotropy of measures.

\begin{definition}\label{def 7.1}
Suppose $K\in\mathcal{S}^n_o$ and $\varphi\in\Phi\cap C^1[0,\infty)$,
the Borel measure $\mu_\varphi(K,\cdot)$ on $S^{n-1}$ is defined by
\[ d\mu_\varphi(K,\cdot)=\varphi'\left(\rho_K\right)\rho_K^{n+1}dS. \]
\end{definition}

The next theorem  not only characterizes the ellipsoid ${\rm L}_\varphi^\diamond K$, but also plays a crucial role to
establish Theorem \ref{thm 7.4}.

\begin{theorem}\label{thm 7.2}
Suppose $K\in\mathcal{S}^n_o$ and $\varphi\in\Phi\cap C^1[0,\infty)$. 
Then, ${\rm L}_\varphi^\diamond K=B$, if and only if the
measure $\mu_\varphi(K,\cdot)$ is isotropic on $S^{n-1}$, i.e.,
\[\frac{n}{{|{\mu _\varphi }(K, \cdot )|}}\int_{{S^{n - 1}}} {u \otimes ud{\mu _\varphi }(K,u)}  = {I_n}.\]
\end{theorem}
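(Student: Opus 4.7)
The strategy is to recognize ${\rm L}_\varphi^\diamond K = B$ as the condition that $T = I_n$ minimizes
\[F(T) := \tilde{V}_\varphi(K, TB) = \frac{1}{n}\int_{S^{n-1}}\varphi\bigl(\rho_K(u)|T^{-1}u|\bigr)\rho_K^n(u)\,dS(u)\]
over symmetric positive definite $T$ with $\det T = 1$ (using $\rho_{TB}(u) = 1/|T^{-1}u|$ and the fact that every origin-symmetric ellipsoid of volume $\omega_n$ takes this form), and to match the first-order optimality condition with isotropy of $\mu_\varphi(K,\cdot)$.

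For the \emph{necessity} direction, vary along $T_t = I_n + tH$ with $H$ symmetric and ${\rm tr}\,H = 0$, so that $T_t$ lies in the constraint set to first order in $t$. The expansion $|T_t^{-1}u|^2 = 1 - 2t(u\cdot Hu) + O(t^2)$ on $S^{n-1}$ gives
\[\frac{d}{dt}\bigg|_{t=0}F(T_t) = -\frac{1}{n}\int_{S^{n-1}}\varphi'(\rho_K)\rho_K^{n+1}(u\cdot Hu)\,dS = -\frac{1}{n}\int_{S^{n-1}}{\rm tr}\!\bigl(H\,u\otimes u\bigr)\,d\mu_\varphi(K,u).\]
Vanishing of this derivative for every traceless symmetric $H$ forces $\int u\otimes u\,d\mu_\varphi(K,u)$ to be a scalar multiple of $I_n$; taking the trace and using $|u|^2 = 1$ pins the scalar as $|\mu_\varphi(K,\cdot)|/n$, which is exactly isotropy.

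For \emph{sufficiency}, assume isotropy and show $F(T) \geq F(I_n)$ for every admissible $T$. The convexity of $\varphi$ gives $\varphi(\rho_K|T^{-1}u|) \geq \varphi(\rho_K) + \varphi'(\rho_K)\rho_K(|T^{-1}u| - 1)$, whence
\[F(T) - F(I_n) \geq \frac{1}{n}\left[\int_{S^{n-1}}|T^{-1}u|\,d\mu_\varphi(K,u) - |\mu_\varphi(K,\cdot)|\right].\]
The key lemma is then: for every symmetric positive definite $A$ with $\det A = 1$,
\[\int_{S^{n-1}}\sqrt{u^T A u}\,d\mu_\varphi(K,u) \;\geq\; |\mu_\varphi(K,\cdot)|.\]
Applying this with $A = T^{-t}T^{-1}$ yields $F(T) \geq F(I_n)$, and the uniqueness of the minimizer established in the proof of Theorem \ref{thm 4.4} then forces ${\rm L}_\varphi^\diamond K = B$.

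To prove the key lemma, diagonalize $A = O\Lambda O^T$; the rotation $v = O^T u$ preserves isotropy of $\mu_\varphi$ and reduces to the diagonal case $\Lambda = {\rm diag}(\lambda_1,\ldots,\lambda_n)$ with $\prod_i \lambda_i = 1$. The weighted arithmetic-geometric mean inequality, using the weights $u_i^2$ (which sum to $1$ on $S^{n-1}$), yields $\sum_i \lambda_i u_i^2 \geq \prod_i \lambda_i^{u_i^2}$, hence
\[\sqrt{u^T\Lambda u}\;\geq\;\exp\!\Bigl(\tfrac{1}{2}\sum_i u_i^2\log\lambda_i\Bigr).\]
Jensen's inequality for the convex function $\exp$, combined with the isotropy identity $\int u_i^2\,d\mu_\varphi = |\mu_\varphi|/n$, then gives
\[\int_{S^{n-1}}\exp\!\Bigl(\tfrac{1}{2}\sum_i u_i^2\log\lambda_i\Bigr)\,d\mu_\varphi\;\geq\;|\mu_\varphi|\exp\!\Bigl(\tfrac{1}{2n}\sum_i\log\lambda_i\Bigr)\;=\;|\mu_\varphi|,\]
since $\prod_i \lambda_i = 1$. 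The main obstacle is precisely this integral inequality: a direct application of Jensen to the concave $\sqrt{\cdot}$ goes the wrong way, and one must route through the AM--GM/exponential-Jensen chain above, exploiting the invariance of isotropy under orthogonal diagonalization of $A$.
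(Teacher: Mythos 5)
Your proof is correct. The necessity half is the same variational computation as the paper's (the paper perturbs by $(I_n+\varepsilon L)/|I_n+\varepsilon L|^{1/n}$ for arbitrary linear $L$ and then tests with $L=v\otimes v$; you restrict to traceless symmetric $H$ and observe that $\int u\otimes u\,d\mu_\varphi$ must be orthogonal to all such $H$ — the identical first-order condition), but your sufficiency argument is genuinely different. The paper keeps $\varphi$ inside the integrand and studies $F(a)=\int\varphi\bigl(\rho_K|\mathrm{diag}(a)u|\bigr)d\tilde V_K$ as a convex function on the positive orthant: isotropy says $\nabla F(e)$ is parallel to $e$, so the sublevel set $\{F\le F(e)\}$ lies in the half-space $\{a\cdot e\le n\}$, while AM--GM places the constraint surface $\{\prod_i a_i=1\}$ in the complementary half-space; general $T$ is then reduced to the diagonal case by a polar/singular-value decomposition. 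You instead linearize $\varphi$ first, via the tangent-line inequality $\varphi(\rho_K|T^{-1}u|)\ge\varphi(\rho_K)+\varphi'(\rho_K)\rho_K(|T^{-1}u|-1)$, which reduces the whole problem to the clean statement that an isotropic measure $\mu$ on $S^{n-1}$ satisfies $\int_{S^{n-1}}\sqrt{u\cdot Au}\,d\mu\ge|\mu|$ for positive definite $A$ with $\det A=1$; your proof of that lemma (weighted AM--GM in the exponent, then Jensen for $\exp$, using $\int u_i^2\,d\mu=|\mu|/n$ in the diagonalizing frame) is correct, and the lemma itself is essentially the known fact from the isotropic-position literature the paper cites (Giannopoulos--Papadimitrakis, Bastero--Romance) that isotropic measures minimize this $L_1$-type functional. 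Your route isolates a reusable inequality about isotropic measures and makes the use of convexity of $\varphi$ completely transparent (it is invoked exactly once, to reduce to the $L_1$ case); the paper's route carries $\varphi$ along and obtains the equality characterization ($T$ orthogonal) en route, which you recover instead by citing the uniqueness from Theorem \ref{thm 4.4} — a legitimate shortcut. Two minor points to tidy: $\det(I_n+tH)=1+O(t^2)$ rather than exactly $1$, so strictly you should differentiate along $e^{tH}$ or $(I_n+tH)/\det(I_n+tH)^{1/n}$ (the derivative is unchanged because $\mathrm{tr}\,H=0$); and in the diagonalization step one should note, as you do, that the pushforward of $\mu_\varphi$ under the orthogonal change of frame is again isotropic.
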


\begin{proof}
First, We show the necessity by variational method.

Let $L: \mathbb R^n\to\mathbb R^n$ be a linear transformation.
Choose $\varepsilon_0>0$ sufficiently small so that for all
$\varepsilon\in (-\varepsilon_0,\varepsilon_0)$ the matrix
$I_n+\varepsilon L$ is invertible. For $\varepsilon\in(-\varepsilon_0,\varepsilon_0)$, define
\[ L_\varepsilon=\frac{I_n+\varepsilon L}{|I_n+\varepsilon L|^{\frac{1}{n}}}. \]
Then $L_\varepsilon\in {\rm SL}(n)$. The assumption that
$L_\varphi^\diamond K=B$ implies that for all $\varepsilon$,
\[ \tilde{V}_\varphi(K,L_\varepsilon^{-1}B)\ge {\tilde V}_\varphi(K,B). \]

The fact $\frac{1}{{{\rho _{L_\varepsilon ^{ - 1}B}}(u)}} = {h_{L_\varepsilon ^tB}}(u)$ for $u\in S^{n-1}$, together with the
definition of $\tilde{V}_\varphi(K,L_\varepsilon^{-1}B)$, gives
\[{{\tilde V}_\varphi }(K,L_\varepsilon ^{ - 1}B) = \int\limits_{{S^{n - 1}}} {\varphi \left( {{\rho _K}(u)\frac{{{{(1 + 2\varepsilon u \cdot Lu + {\varepsilon ^2}Lu \cdot Lu)}^{\frac{1}{2}}}}}{{|{I_n} + \varepsilon L{|^{\frac{1}{n}}}}}} \right)d{{\tilde V}_K(u)}} .\]
From the smoothness of $\varphi$ and
$|L_\varepsilon u|$ in $\varepsilon$, the integrand depends smoothly
on $\varepsilon$. Thus,
\[{\left. {\frac{d}{{d\varepsilon }}} \right|_{\varepsilon  = 0}}{{\tilde V}_\varphi }(K,L_\varepsilon ^{ - 1}B) = 0.\]

Calculating it directly, we have
\begin{align*}
0 &= \int_{{S^{n - 1}}} {{{\left. {\frac{\partial }{{\partial \varepsilon }}} \right|}_{\varepsilon  = 0}}\varphi \left( {{\rho _K}(u)\frac{{{{(1 + 2\varepsilon u \cdot Lu + {\varepsilon ^2}Lu \cdot Lu)}^{\frac{1}{2}}}}}{{|{I_n} + \varepsilon L{|^{\frac{1}{n}}}}}} \right)d{{\tilde V}_K}(u)}  \\
&= \int_{{S^{n - 1}}} {\varphi '\left( {{\rho _K}(u)} \right)\left( { - \frac{{{\mathop{\rm tr}\nolimits} \;L}}{n} + u \cdot Lu} \right){\rho _K}(u)d{{\tilde V}_K}(u)}  \\&= \frac{1}{n}\int_{{S^{n - 1}}} {\left( { - \frac{{{\mathop{\rm tr}\nolimits} \;L}}{n} + u \cdot Lu} \right)d{\mu _\varphi }(K,u)} .
\end{align*}

Let $v\in S^{n-1}$ and $L=v\otimes v$. Using the facts ${\rm tr} (v\otimes v) =1$ and $u\cdot (v\otimes v) u= (u\cdot v)^2$, it gives
\[\int_{{S^{n - 1}}} {{{(u \cdot v)}^2}d{\mu _\varphi }(K,u)} =  \frac{{|{\mu _\varphi }(K, \cdot )|}}{n}.\]
Thus,  $\mu_\varphi(K,\cdot)$ is isotropic on $S^{n-1}$.

Next, we prove the sufficiency. Suppose that $\mu_\varphi(K,\cdot)$ is isotropic on $S^{n-1}$.
It  suffices to  prove that if $E\in\mathcal{E}^n$ and $V(E)=\omega_n$, then
\[ {\tilde V}_\varphi(K,E)\ge {\tilde V}_\varphi(K,B), \]
If so, it  will imply that ${\rm L}_\varphi^\diamond K= B$. The proof will be completed after three steps.

First, for $a=(a_1,\cdots,a_n)\in [0,\infty)^n$, define
\[F(a) = \int_{{S^{n - 1}}} {\varphi \left( {{\rho _K}(u)} \right)|{\mathop{\rm diag}\nolimits} ({a_1}, \cdots ,{a_n})u|d{{\tilde V}_K}(u)} ,\]
where ${\rm diag}(a_1,\cdots,a_n)$ denotes the $n\times n$ diagonal
matrix with diagonal elements $a_1,\cdots,a_n$.

We aim to show  that
\begin{equation}\label{step1 for thm 8.2}
F(a)\ge F(e),\quad {\rm whenever}\; \prod_{j=1}^na_j=1.
\end{equation}
Here, $e$ denotes the point $(1,\cdots,1)$.

From the smoothness of $\varphi$ and  $|{\rm diag}(a_1,\cdots,a_n) u|$ in $(a_1,\cdots,a_n)$, we have
\begin{align*}
{\left.{\frac{\partial }{{\partial {a_j}}}} \right|_{a = e}}F(a) &= \int_{{S^{n - 1}}} {{{\left.{\frac{\partial }{{\partial {a_j}}}} \right|}_{a = e}}\varphi \left( {{\rho _K}(u)} \right)|{\mathop{\rm diag}\nolimits} ({a_1}, \cdots ,{a_n})u|d{{\tilde V}_K}(u)}  \\
&= \int_{{S^{n - 1}}} {\varphi '\left( {{\rho _K}(u)} \right){\rho _K}(u){{\left. {\frac{\partial }{{\partial {a_j}}}} \right|}_{a = e}}|{\mathop{\rm diag}\nolimits} ({a_1}, \cdots ,{a_n})u|d{{\tilde V}_K}(u)}  \\
&= \int_{{S^{n - 1}}} {u_j^2\varphi '\left( {{\rho _K}(u)} \right){\rho _K}(u)d{{\tilde V}_K}(u)},
\end{align*}
where $(u_1,\cdots,u_n)$ denotes the coordinates of $u\in S^{n-1}$.
From the isotropy  of $\mu_\varphi(K,\cdot)$, it follows that
\[{\left. {\frac{\partial }{{\partial {a_j}}}} \right|_{a = e}}F(a) = \frac{{|{\mu _\varphi }(K, \cdot )|}}{n}.\]
Thus,
\begin{equation}\label{outer normal}
\nabla F(e) = \frac{{|{\mu _\varphi }(K, \cdot )|}}{n}e.
\end{equation}

It can be checked  that the function $F:  [0, \infty)^n\to [0,\infty)$ is
continuous and convex, and $F(\lambda a)$ is strictly increasing in
$\lambda\in [0,\infty)$,  for  $a\in {(0,\infty )^n}$. Thus,
$F^{-1}([0, F(e)])$ is compact, convex and of non-empty interior.
Precisely, it is a convex body. Its boundary is given by the
equation $ F(a)=F(e)$ with $a\in [0,\infty)^n$, so (\ref{outer
normal}) implies the vector $e$ is an outer normal of the convex
body $F^{-1}([0, F(e)])$ at the boundary point $e$.

Consequently, ${F^{ - 1}}([0,F(e)]) \subset \left\{ {a \in {\mathbb
R^n}:a \cdot e \le n} \right\}$. That is to say, for all
$a\in\mathbb [0,\infty)^n$, if $F(a)\le F(e)$, then $a\cdot e\le n$.
In contrast, for all $b=(b_1,\cdots,b_n)\in (0,\infty)^n$ with
$b_1\cdots b_n =1$, the AM-GM inequality yields that $b\cdot e\ge
n$, with equality if and only if $b=e$. Hence,  (\ref{step1 for thm 8.2}) is derived.

Secondly, with (\ref{step1 for thm 8.2}) in hand, we aim to show that for
$T\in {\rm SL}(n)$,
\begin{equation}\label{step2 for thm 8.2}
{\tilde{V}_\varphi }(TK,B) \ge {{\tilde V}_\varphi }(K,B),
\end{equation}
with equality if and only if $T$ is orthogonal.

Indeed, it is known that each $T\in{\rm SL}(n)$ can be represented as
$T^{-1}=O_1^{-1}AO_2$, where $O_1$, $O_2$ are $n\times n$
orthogonal matrices, and  $A={\rm diag}(a_1,\cdots, a_n)$ is  diagonal and positive definite with $a_1a_2\cdots a_n=1$. Note that
${{\tilde V}_\varphi }(TK,B) = {{\tilde V}_\varphi }(O_1K,AB)$. So,
applying (\ref{step1 for thm 8.2}) to the body $O_1K$, it gives
(\ref{step2 for thm 8.2}).

Finally, we rewrite inequality (\ref{step2 for thm 8.2}) equivalently as
\[{\tilde{V}_\varphi }(K,E) \ge {\tilde{V}_\varphi }(K,B),\]
for all $E\in\mathcal{E}^n$ with $V(E)=\omega_n$, with equality if
and only if $E=B$. So, ${\rm L}_\varphi^\diamond K=B$.

The proof is complete.
\end{proof}

\begin{corollary}\label{cor 7.3}
Suppose $K\in\mathcal{S}^n_o$ and $\varphi\in\Phi\cap C^1[0,\infty)$.
Then, modulo orthogonal transformations, there exists an ${\rm SL}(n)$ transformation $T$
such that the measure $\mu_\varphi(TK,\cdot)$ is isotropic on $S^{n-1}$.
\end{corollary}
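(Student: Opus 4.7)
The plan is to reduce the corollary to Theorem 7.2 by exploiting a natural ${\rm SL}(n)$-covariance of the operator ${\rm L}_\varphi^\diamond$. The whole argument is a normalization in the spirit of John-position: one picks an ${\rm SL}(n)$ transformation that moves ${\rm L}_\varphi^\diamond K$ onto the ball $B$, and then quotes the characterization in Theorem 7.2.

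First I would establish the covariance identity
\[ {\rm L}_\varphi^\diamond(TK) = T\bigl({\rm L}_\varphi^\diamond K\bigr), \qquad T \in {\rm SL}(n). \]
This is immediate from Lemma 3.2 (1): for $T \in {\rm SL}(n)$ one has $|T|=1$, hence $\tilde V_\varphi(TK,E) = \tilde V_\varphi(K,T^{-1}E)$. As $E$ ranges over origin-symmetric ellipsoids with $V(E)=\omega_n$, the image $T^{-1}E$ ranges over the same class (both origin-symmetry and volume are preserved by ${\rm SL}(n)$). Therefore minimizing $\tilde V_\varphi(TK,\cdot)$ over this class is equivalent to minimizing $\tilde V_\varphi(K,\cdot)$ there, and by the uniqueness asserted in Definition 4.11 the minimizer is $T\,{\rm L}_\varphi^\diamond K$.

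Next, since ${\rm L}_\varphi^\diamond K$ is an origin-symmetric ellipsoid with $V({\rm L}_\varphi^\diamond K)=\omega_n$, there exists $S \in {\rm SL}(n)$ with ${\rm L}_\varphi^\diamond K = SB$; such $S$ is determined up to right-multiplication by an orthogonal transformation, which accounts precisely for the ``modulo orthogonal transformations'' clause in the statement. Setting $T = S^{-1} \in {\rm SL}(n)$ and applying the covariance identity yields
\[ {\rm L}_\varphi^\diamond(TK) \;=\; T\bigl({\rm L}_\varphi^\diamond K\bigr) \;=\; S^{-1}SB \;=\; B. \]

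Finally, I would invoke Theorem 7.2 with $K$ replaced by $TK$: the equality ${\rm L}_\varphi^\diamond(TK)=B$ is equivalent to the isotropy of $\mu_\varphi(TK,\cdot)$ on $S^{n-1}$, which is the desired conclusion. There is essentially no obstacle here beyond checking the covariance step carefully; once ${\rm SL}(n)$ is known to act on the admissible class of ellipsoids while preserving volume, the argument is a direct symmetry-and-normalization consequence of the uniqueness statement in Definition 4.11 together with Theorem 7.2.
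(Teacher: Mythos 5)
Your argument is correct and is exactly the normalization-to-John-position reasoning that the paper implicitly intends: the paper states this as an immediate corollary of Theorem 7.2 with no written proof, and your ${\rm SL}(n)$-covariance of ${\rm L}_\varphi^\diamond$ (via Lemma 3.2(1) with $|T|=1$ and the uniqueness from Theorem 4.4/Definition 4.11) followed by the choice $T=S^{-1}$ with $SB={\rm L}_\varphi^\diamond K$ is precisely the step being suppressed. The only thing worth making a touch more explicit is the uniqueness half of ``modulo orthogonal transformations'': if both $\mu_\varphi(T_1K,\cdot)$ and $\mu_\varphi(T_2K,\cdot)$ are isotropic, Theorem 7.2 and your covariance identity give $T_1({\rm L}_\varphi^\diamond K)=B=T_2({\rm L}_\varphi^\diamond K)$, so $T_1T_2^{-1}B=B$ and $T_1T_2^{-1}$ is orthogonal.
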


\begin{theorem}\label{thm 7.4}
Suppose $K\in\mathcal{S}^n_o$, $\varphi\in\Phi\cap C^1[0,\infty)$
and $T\in {\rm SL}(n)$. Then the following assertions are
equivalent:

\noindent (1) ${\rm L}_\varphi\left(O_\varphi(K,TB)^{-1}K\right)=TB$,

\noindent (2) ${\rm \overline L}_\varphi K= TB$,

\noindent (3) ${\rm L}_\varphi^\diamond\left(O_\varphi(T^{-1}K,B)^{-1}T^{-1}K\right)=B$,

\noindent (4) $\mu_\varphi\left(O_\varphi(T^{-1}K,B)^{-1}T^{-1}K,\cdot\right)$ is isotropic on $S^{n-1}$.
\end{theorem}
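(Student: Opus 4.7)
\medskip

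\noindent\textbf{Proof proposal.} The strategy is to read off the equivalence as a chain $(1)\Leftrightarrow(2)\Leftrightarrow(3)\Leftrightarrow(4)$, where the last link is immediate from Theorem \ref{thm 7.2} and the other two are pure bookkeeping using the covariance and homogeneity properties proved in Sections 3 and 4. Throughout, set $\lambda = O_\varphi(K,TB)$; by Lemma \ref{lem 3.4}(1) we also have $\lambda = O_\varphi(T^{-1}K,B)$, which is the scaling constant that appears in both (1) and (3).

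For $(1)\Leftrightarrow(2)$, the plan is to combine Lemma \ref{lem 4.10} applied to the scalar $\lambda^{-1}I_n$ (yielding ${\rm L}_\varphi(\lambda^{-1}K)=\lambda^{-1}{\rm L}_\varphi K$) with the representation in (\ref{representations of O L ellipsoids}). Statement (1) rewritten this way becomes ${\rm L}_\varphi K = \lambda\, TB$. Since $V({\rm L}_\varphi K)=\lambda^n\omega_n$ forces the normalizing factor in (\ref{representations of O L ellipsoids}) to equal $\lambda^{-1}$, this is in turn equivalent to $\overline{{\rm L}}_\varphi K = TB$, which is (2). Conversely, given (2), the identity ${\rm L}_\varphi K = O_\varphi(K,\overline{{\rm L}}_\varphi K)\overline{{\rm L}}_\varphi K = \lambda TB$ and the homogeneity of ${\rm L}_\varphi$ give (1).

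For $(2)\Leftrightarrow(3)$, the plan is first to reduce (2) to the assertion $\overline{{\rm L}}_\varphi(T^{-1}K)=B$. This follows from the definition of $\overline{{\rm L}}_\varphi$ as a minimizer together with Lemma \ref{lem 3.4}(1): for every $E\in\mathcal{E}^n$ with $V(E)=\omega_n$, $O_\varphi(K,E)=O_\varphi(T^{-1}K,T^{-1}E)$ and $T^{-1}$ preserves the class of volume-$\omega_n$ ellipsoids since $T\in {\rm SL}(n)$. Next, I use Lemma \ref{lem 4.5} with $E_0 = B$: the optimality of $B$ for $O_\varphi(T^{-1}K,\cdot)$ on $\{V=\omega_n\}$ is equivalent to the optimality of $\lambda B$ for $\tilde V_\varphi(T^{-1}K,\cdot)$ on $\{V=\lambda^n\omega_n\}$. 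Using Lemma \ref{lem 3.2}(1) to pull out the factor $\lambda^n$, the latter is exactly the statement that $B$ minimizes $\tilde V_\varphi(\lambda^{-1}T^{-1}K,\cdot)$ over $\{V=\omega_n\}$, which by Definition \ref{def 4.11} reads ${\rm L}_\varphi^\diamond(\lambda^{-1}T^{-1}K)=B$, i.e.\ statement (3).

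The equivalence $(3)\Leftrightarrow(4)$ is then nothing but Theorem \ref{thm 7.2} applied to the star body $O_\varphi(T^{-1}K,B)^{-1}T^{-1}K\in\mathcal{S}^n_o$. I do not foresee a genuine obstacle in this theorem once Theorem \ref{thm 7.2} is in hand; the only care required is to track the scale factor $\lambda$ consistently across the three ``nonlinear'' functionals $O_\varphi$, $\tilde V_\varphi$, and ${\rm L}_\varphi$, which is precisely what Lemmas \ref{lem 3.2}, \ref{lem 3.4} and \ref{lem 4.5} were set up to control.
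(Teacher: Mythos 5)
Your proposal is correct and follows essentially the same route as the paper: the paper proves $(1)\Leftrightarrow(2)$ from the identities in (\ref{representations of O L ellipsoids}), $(2)\Leftrightarrow(3)$ from Lemma \ref{lem 4.5} together with Lemma \ref{lem 3.4}(1), and $(3)\Leftrightarrow(4)$ from Theorem \ref{thm 7.2}. Your version merely makes explicit two auxiliary scaling facts that the paper leaves implicit --- Lemma \ref{lem 4.10} with $T=\lambda^{-1}I_n$ to get ${\rm L}_\varphi(\lambda^{-1}K)=\lambda^{-1}{\rm L}_\varphi K$, and Lemma \ref{lem 3.2}(1) with $T=\lambda^{-1}I_n$ to move the dilation from the second argument of $\tilde V_\varphi$ to the first --- which is a useful clarification but not a different proof.
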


\begin{proof}
Equations (\ref{representations of O L ellipsoids}) yields the
equivalence ``(1) $\Leftrightarrow$ (2)".  Combining Lemma \ref{lem 4.5} with Lemma \ref{lem 3.4} (1),
it gives the equivalence ``(2) $\Leftrightarrow$ (3)".
Finally, Theorem \ref{thm 7.2} implies the equivalence ``(3) $\Leftrightarrow$ (4)".
\end{proof}

\vskip 25pt
\section{Volume ratio inequalities}
\vskip 10pt

In general, the Orlicz-Legendre ellipsoid ${\rm L}_\varphi K$ does not
contain $K$. However, we show that the volume functional over the class of Orlicz-Legendre ellipsoids of $K$
is bounded by $V({\rm L}_1K)$ from below and by $V({\rm L}_\infty K)$ from above.
\begin{theorem}\label{thm 8.1}
Suppose $K\in\mathcal{S}^n_o$, $\varphi\in\Phi$ and $1\le p< q<\infty$. Then
\[ V({\rm L}_1K)\le V({\rm L}_\varphi K)\le V({\rm L}_{\varphi^p} K)\le V({\rm L}_{\varphi^q} K)\le V({\rm L}_\infty K). \]
\end{theorem}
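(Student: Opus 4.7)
The entire chain reduces to a single monotonicity principle: if $O_{\varphi_1}(K,E) \le O_{\varphi_2}(K,E)$ for every origin-symmetric ellipsoid $E$, then
\[
\{E \in \mathcal{E}^n : O_{\varphi_2}(K,E) \le 1\} \subseteq \{E \in \mathcal{E}^n : O_{\varphi_1}(K,E) \le 1\},
\]
so the minimum of $V(E)$ over the larger set (on the right) is no greater than over the smaller one, giving $V({\rm L}_{\varphi_1} K) \le V({\rm L}_{\varphi_2} K)$ directly from Definition \ref{def 4.9}. The plan is to apply this observation four times, once for each adjacent pair in the stated chain.

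For the three middle inequalities I would invoke the comparison Lemma \ref{lem 3.9} (3), which yields
\[
\bar{\tilde V}_{-1}(K,E) \le O_\varphi(K,E) \le O_{\varphi^p}(K,E) \le O_{\varphi^q}(K,E),
\]
for all $E \in \mathcal{E}^n$ and all $1 \le p < q < \infty$. Unwinding Definition \ref{def 3.3} in the case $\varphi(t)=t$ shows $O_1(K,E) = \bar{\tilde V}_{-1}(K,E)$, so the above inequalities identify $O_1(K,E)$ as the leftmost entry. Applying the monotonicity principle successively then produces $V({\rm L}_1 K) \le V({\rm L}_\varphi K) \le V({\rm L}_{\varphi^p} K) \le V({\rm L}_{\varphi^q} K)$.

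For the rightmost inequality $V({\rm L}_{\varphi^q} K) \le V({\rm L}_\infty K)$, it suffices to show that ${\rm L}_\infty K$ itself is feasible for Problem ${\rm P}_2$ with $\varphi$ replaced by $\varphi^q$. By Theorem \ref{thm 6.2}, ${\rm L}_\infty K$ contains ${\rm conv}\,K$ and hence contains $K$, so $\rho_K \le \rho_{{\rm L}_\infty K}$ on $S^{n-1}$ and consequently $\|\rho_K / \rho_{{\rm L}_\infty K}\|_\infty \le 1$. Invoking the upper bound in Lemma \ref{lem 3.9} (3),
\[
O_{\varphi^q}(K,{\rm L}_\infty K) \le \left\|\frac{\rho_K}{\rho_{{\rm L}_\infty K}}\right\|_\infty \le 1,
\]
so ${\rm L}_\infty K$ is a competitor in the minimization defining ${\rm L}_{\varphi^q} K$, and minimality forces $V({\rm L}_{\varphi^q} K) \le V({\rm L}_\infty K)$. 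No serious obstacle is anticipated; the whole argument is a structural consequence of Lemma \ref{lem 3.9}, Definition \ref{def 4.9}, and Theorem \ref{thm 6.2}, with the only mildly delicate point being to confirm the identification $O_1(K,\cdot) = \bar{\tilde V}_{-1}(K,\cdot)$ so that the leftmost entry of the chain fits into the same monotonicity framework.
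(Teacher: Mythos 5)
Your proposal is correct and follows essentially the same route as the paper: the paper likewise deduces the nested inclusions of the feasibility sets $\{E\in\mathcal{E}^n : O(K,E)\le 1\}$ from the chain $\bar{\tilde V}_{-1}\le O_\varphi\le O_{\varphi^p}\le O_{\varphi^q}\le\|\rho_K/\rho_E\|_\infty$ of Lemma \ref{lem 3.9}, and concludes by minimality of volume over each set. The only nit is that the relevant minimization is the one in Definition \ref{def 4.9} (i.e.\ Problem ${\rm P}_3$/$\tilde S_\varphi$), not Problem ${\rm P}_2$, but this does not affect the argument.
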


\begin{proof}
From Lemma \ref{lem 3.9}, it follows that
\begin{align*}
\left\{ {E \in {\mathcal{E}^n}:{{\left\| {\frac{{{\rho _K}}}{{{\rho _E}}}:{V^*_K}} \right\|}_1} \le 1} \right\} &\supseteq \left\{ {E \in {\mathcal{E}^n}:{{\left\| {\frac{{{\rho _K}}}{{{\rho _E}}}:{V^*_K}} \right\|}_\varphi } \le 1} \right\} \\
&\supseteq \left\{ {E \in {\mathcal{E}^n}:{{\left\| {\frac{{{\rho _K}}}{{{\rho _E}}}:{V^*_K}} \right\|}_{{\varphi ^p}}} \le 1} \right\} \\
&\supseteq \left\{ {E \in {\mathcal{E}^n}:{{\left\| {\frac{{{\rho _K}}}{{{\rho _E}}}:{V^*_K}} \right\|}_{{\varphi ^q}}} \le 1} \right\} \\
&\supseteq \left\{ {E \in {\mathcal{E}^n}:{{\left\| {\frac{{{\rho
_K}}}{{{\rho _E}}}} \right\|}_\infty } \le 1} \right\}.
 \end{align*}
From the above inclusions and the definition of Orlicz-Legendre ellisoids,
the desired inequalities are obtained.
\end{proof}

\begin{theorem}\label{thm 8.2}
Suppose $K\in\mathcal{S}^n_o$ and $\varphi\in\Phi$. Then
\[V({\rm L}_\varphi K)\ge V(K), \]
with equality if and only if $K\in\mathcal{E}^n$.
\end{theorem}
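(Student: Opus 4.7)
The plan is to reduce the claim to the dual Orlicz Minkowski inequality (Lemma 3.6) applied at the extremal pair $(K, {\rm L}_\varphi K)$, exploiting the fact that the optimum of the constrained minimization problem defining ${\rm L}_\varphi K$ is automatically attained on the boundary of the constraint.

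First I would identify the value of $\bar{\tilde V}_\varphi(K,{\rm L}_\varphi K)$. By Definition 4.9, ${\rm L}_\varphi K$ minimizes $V(E)$ over $\{E \in \mathcal{E}^n : O_\varphi(K,E)\le 1\}$, and Lemma 4.8 shows that at such a minimizer the constraint must be active; combined with Lemma 3.5 this yields $O_\varphi(K,{\rm L}_\varphi K)=1$ and hence $\bar{\tilde V}_\varphi(K,{\rm L}_\varphi K)=1$.

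Next I would apply the dual Orlicz Minkowski inequality (\ref{ineq 3.6.1}) with $L = {\rm L}_\varphi K$ to obtain
\[
1 \;=\; \bar{\tilde V}_\varphi(K,{\rm L}_\varphi K) \;\ge\; \left(\frac{V(K)}{V({\rm L}_\varphi K)}\right)^{1/n},
\]
which is exactly the desired inequality $V({\rm L}_\varphi K) \ge V(K)$.

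For the equality characterization, the equality condition in Lemma 3.6 forces $K$ and ${\rm L}_\varphi K$ to be dilates; since they then have equal volume, the dilation factor must be $1$, so $K = {\rm L}_\varphi K \in \mathcal{E}^n$. Conversely, if $K \in \mathcal{E}^n$, then ${\rm L}_\varphi K = K$ (noted right after Definition 4.9), so equality holds trivially. Since both ingredients are already in place in the paper, I do not anticipate a genuine obstacle; the only small point to verify carefully is that a solution to the constrained problem of Definition 4.9 indeed satisfies $O_\varphi(K,{\rm L}_\varphi K) = 1$ rather than the slack inequality, which is handled by Lemma 4.8 combined with the homogeneity identity of Lemma 3.4(2).
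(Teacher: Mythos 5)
Your proof is correct and follows essentially the same route as the paper: both rest on the fact that the constraint is active at the minimizer, i.e. $O_\varphi(K,{\rm L}_\varphi K)=1$, combined with the dual Orlicz Minkowski inequality of Lemma 3.6 and its dilation equality condition. The only cosmetic difference is that you pass through $\bar{\tilde V}_\varphi$ via Lemma 3.5 and use inequality (3.1), while the paper applies the equivalent $O_\varphi$ form (3.2) directly.
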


\begin{proof}
From Lemma \ref{lem 3.6},  it follows that
\[{O_\varphi }\left( {K,{{\rm L}_\varphi }K} \right) \ge {\left( {\frac{{V(K)}}{{V({{\rm L}_\varphi }K)}}} \right)^{\frac{1}{n}}},\]
with equality if and only if $K\in\mathcal{E}^n$. Combing this with the fact
\[1 = {O_\varphi }\left( {K,{{\rm L}_\varphi }K} \right),\]
the desired inequality is followed.
\end{proof}

If $\varphi(t)=t^p$, $1\le p<\infty$, then
Theorem \ref{thm 8.2} implies that $V({\rm L}_pK)\ge V(K)$, and in particular that $V(\Gamma_2K)\ge V(K)$.

A classical result on John's ellipsoid
is Ball's volume ratio inequality \cite{Ball1, Ball2}, which states: if $K$ is an
origin-symmetric convex body in $\mathbb R^n$, then
\[\frac{{V(K)}}{{V({\rm J}K)}} \le \frac{{{2^n}}}{{{\omega _n}}},\]
with equality if and only if $K$ is a parallelotope. The fact that
equality holds in Ball's inequality only for parallelotope was
established by Barthe \cite{Barthe}. He also established the outer
volume-ratio inequality: if $K$ is an origin-symmetric convex body
in $\mathbb R^n$, then
\[\frac{{V(K)}}{{V({\rm L}K)}} \ge \frac{{{2^n}}}{{n!{\omega _n}}},\]
with equality if and only if $K$ is a cross-polytope.

Recall that when $K$ is an origin-symmetric convex body,
${\rm L}_\infty K$ is just the L${\rm \ddot{o}}$wner ellipsoid ${\rm L}K$.
Thus,  Combining Theorem \ref{thm 8.1} with Barthe's outer volume ratio
inequality, we immediately obtain

\begin{theorem}\label{thm 8.3}
Suppose $K\in\mathcal{K}^n_o$ is origin-symmetric and $\varphi\in\Phi$. Then
\[\frac{{V(K)}}{{V({{\rm L}_\varphi }K)}} \ge \frac{{{2^n}}}{{n!{\omega _n}}}.\]
\end{theorem}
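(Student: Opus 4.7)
The strategy is to chain together three ingredients already available in the paper: the monotonicity of the volume of Orlicz-Legendre ellipsoids along the spectrum (Theorem \ref{thm 8.1}), the identification of ${\rm L}_\infty K$ with the classical L$\ddot{\rm o}$wner ellipsoid in the origin-symmetric convex setting (noted right after Theorem \ref{thm 6.4}), and Barthe's outer volume-ratio inequality (recalled just before the statement). In essence, the whole argument is a transitivity of volume comparisons.

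Concretely, the plan is as follows. First, I would invoke Theorem \ref{thm 8.1} to get the upper bound
\[ V({\rm L}_\varphi K) \le V({\rm L}_\infty K), \]
which is the key step that lifts the inequality from an arbitrary $\varphi\in\Phi$ to the much more rigid ellipsoid ${\rm L}_\infty K$. Second, since $K\in\mathcal K^n_o$ is origin-symmetric, the remark following Theorem \ref{thm 6.4} (together with the defining formula ${\rm L}_\infty K = ({\rm E}_\infty(\mathrm{conv}\,K)^*)^*$ in Definition \ref{def 6.1}, which reduces to $({\rm E}_\infty K^*)^*$ under symmetry and convexity of $K$) gives the identification
\[ {\rm L}_\infty K = {\rm L} K, \]
where ${\rm L}K$ is the classical L$\ddot{\rm o}$wner ellipsoid.

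Third, I apply Barthe's outer volume-ratio inequality to the origin-symmetric convex body $K$ to obtain
\[ \frac{V(K)}{V({\rm L} K)} \ge \frac{2^n}{n!\,\omega_n}. \]
Combining the three displays yields
\[ \frac{V(K)}{V({\rm L}_\varphi K)} \ge \frac{V(K)}{V({\rm L}_\infty K)} = \frac{V(K)}{V({\rm L} K)} \ge \frac{2^n}{n!\,\omega_n}, \]
which is precisely the asserted inequality.

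There is essentially no obstacle here, since every step has already been put in place earlier in the paper; the only point that deserves a line of justification is the identification ${\rm L}_\infty K = {\rm L}K$ under origin-symmetry, which follows because for an origin-symmetric $K\in\mathcal K^n_o$ one has $\mathrm{conv}\,K=K$, so Definition \ref{def 6.1} reads ${\rm L}_\infty K = ({\rm E}_\infty K^*)^*$, and Theorem \ref{thm 6.2} identifies this as the unique origin-symmetric ellipsoid of minimal volume containing $K$, i.e.\ the L$\ddot{\rm o}$wner ellipsoid ${\rm L}K$. Equality characterization is not claimed in the statement, so nothing further is required.
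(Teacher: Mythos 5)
Your argument is correct and matches the paper's own proof exactly: the paper also obtains the inequality by combining Theorem~\ref{thm 8.1} with Barthe's outer volume-ratio inequality, after noting that ${\rm L}_\infty K$ coincides with the L\"owner ellipsoid ${\rm L}K$ for origin-symmetric convex $K$. Your extra remark spelling out why ${\rm conv}\,K=K$ and ${\rm L}_\infty K=({\rm E}_\infty K^*)^*$ in this case is a useful clarification but not a departure from the paper's route.
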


It is easily seen that the volume ratio $\frac{V({\rm L}_\varphi K)}{V(K)}$ is
${\rm GL}(n)$-invariant and  minimized by origin-symmetric
ellipsoids.  Theorem \ref{thm 8.3}  shows that
$\frac{V({\rm L}_\varphi K)}{V(K)}$ is bounded from above.  However,
the exact equality condition is not yet known.

\noindent \textbf{Problem.} \emph{Suppose $\varphi\in\Phi$. Amongst
all origin-symmetric convex bodies $K$ in $\mathbb R^n$,   which
ones maximize the volume ratio $\frac{V({\rm L}_\varphi K)}{V(K)}$? }

A particular case concerns with the volume ratio $\frac{V(\Gamma_2 K)}{V(K)}$.
As pointed out by Schneider \cite{Schneider} and LYZ \cite{LYZ1},
that to find the maximizers for $\frac{V(\Gamma_2 K)}{V(K)}$ over the class of  origin-symmetric convex
bodies is still a major open problem in convex geometry.
It is even difficult to show that there exists a constant $c$ which is independent of the
dimension $n$ and bounds the volume ratio $\frac{V(\Gamma_2 K)}{V(K)}$ from above. This problem was
firstly posed by Bourgain \cite{Bourgain1}. For more  information, we refer to
Bourgain \cite{Bourgain2}, Dar \cite{Dar}, Junge \cite{Junge}, Lindenstrauss and Milman \cite{LindenstraussMilman},
LYZ \cite{LYZ1}, and Milman and Pajor \cite{Milman}.

\vskip 25pt
\section*{Appendix A}
\vskip 10pt

\noindent \textbf{Lemma A.1.} \emph{Suppose $\{T_j\}_{j\in\mathbb N}\subset {\rm SL}(n)$. Then}
\[\|T_j\|\to\infty\quad \Longleftrightarrow\quad \|T_j^{-1}\|\to\infty. \]
\emph{Thus,  $\{T_j\}_{j\in\mathbb N}$ is bounded from above, if and only if  $\{T_j^{-1}\}_{j\in\mathbb N}$ is bounded from above.}

\begin{proof}
It suffices  to prove the implication
\begin{equation}\label{implication A.1}
\|T_j\|\to\infty\quad \Longrightarrow\quad \|T_j^{-1}\|\to\infty.
\end{equation}

For this aim, represent any $T\in {\rm SL(n)}$ in the form $ T=O_1AO_2$, where
$O_1, O_2$ are $n\times n$ orthogonal matrices, and
$A={\rm diag}(a_1,\cdots,a_n)$ is an $n\times n$ diagonal matrix, with
positive diagonal elements $a_1,\cdots, a_n$, and $\det(A)=1$. Then,
\begin{equation}\label{(A.1)}
\left\| T \right\|  = \mathop {\max }\limits_{1 \le i \le n} {a_i}.
\end{equation}

Observe that ${T^{ - 1}} = O_2^{ - 1}{A^{ - 1}}O_1^{ - 1}$, the
matrices $O_2^{-1}, O_1^{-1}$ are orthogonal, and $A^{-1}={\rm diag}\left(\frac{1}{a_1},\cdots,\frac{1}{a_n}\right)$.
Thus,
\begin{equation}\label{(A.2)}
\left\| {{T^{ - 1}}} \right\| = \mathop {\max }\limits_{1 \le i \le n} \frac{1}{{{a_i}}} = \frac{1}{{\mathop {\min }\limits_{1 \le i \le n} {a_i}}}.
\end{equation}

Meanwhile, the condition $\prod_{i=1}^n{a_i}=1$ together with the inequality
\[{\left( {\mathop {\min }\limits_{1 \le i \le n} {a_i}} \right)^{n - 1}}\mathop {\max }\limits_{1 \le i \le n} {a_i} \le \prod_{i=1}^n{a_i}\]
gives
\begin{equation}\label{(A.3)}
\frac{1}{{\mathop {\min }\limits_{1 \le i \le n} {a_i}}} \ge {\left( {\mathop {\max }\limits_{1 \le i \le n} {a_i}} \right)^{\frac{1}{{n - 1}}}}.
\end{equation}

Hence, by (\ref{(A.1)}), (\ref{(A.2)}) and (\ref{(A.3)}), the implication (\ref{implication A.1}) is derived.
\end{proof}

\noindent \textbf{Lemma A.2.} \emph{Suppose $\{T_j\}_{j\in\mathbb N}\subset {\rm SL}(n)$, $ T_0\in {\rm SL}(n)$. If  $T_j\to T_0$ with respect to  $d_n$, then}

\noindent \emph{(1)} $T_j^t B\to T_0^tB$ \emph{with respect to} $\delta_H$.

\noindent \emph{(2)} $T_j^{-1}\to T_0^{-1}$ \emph{with respect to} $d_n$.

\noindent \emph{(3)} $T_jB\to T_0 B$ \emph{with respect to} ${\tilde \delta}_H$.

\begin{proof}
From the following implications,
\begin{align*}
\left\| {{T_{j}} - {T_0}} \right\| \to 0\quad &\Longleftrightarrow\quad {T_{j}}u \to T_0u\;,  {\rm{uniformly}}\;{\rm{for}}\;u \in {S^{n - 1}}, \\
&\Longrightarrow\quad |{T_{j}}u| \to |T_0u|\;{\rm{uniformly}}\;, {\rm{for}}\;u \in {S^{n - 1}} ,\\
&\Longleftrightarrow\quad {h_{T_{j}^tB}}(u) \to {h_{T_0^tB}}(u)\;, {\rm{uniformly}}\;{\rm{for}}\;u \in {S^{n - 1}} ,\\
&\Longleftrightarrow\quad T_{j}^tB \to T_0^tB\;, {\rm with~ respect~ to}\; \delta_H,
\end{align*}
it yields  (1) directly.

Since $T_j\to T_0$, the sequence $\{T_j\}$ is bounded in
$(\mathscr{L}^n,d_n)$. By Lemma A.1, the sequence $\{T^{-1}_j\}$
is also bounded. Thus, to prove  $T_j^{-1}\to T_0^{-1}$, it
suffices to prove any convergent subsequence
$\{T^{-1}_{j_k}\}_{k\in\mathbb N}$ converges to $T^{-1}_0$.  Assume $T^{-1}_{j_k}\to T$.

Since $\|T_{j_k}-T_0\|\to 0$ and $\|T^{-1}_{j_k}-T\|\to 0$, from
$\mathop {\sup }\limits_{k \in \mathbb{N}} \left\| {{T_{{j_k}}}} \right\|<\infty$,  we have
\begin{align*}
\left\| {{T_{{j_k}}}T_{{j_k}}^{ - 1} - {T_0}T} \right\| &\le \left\| {{T_{{j_k}}}(T_{{j_k}}^{ - 1} - T)} \right\| + \left\| {({T_{{j_k}}} - {T_0})T} \right\| \\
&\le \left\| {T_{{j_k}}^{ - 1} - T} \right\|\left\| {{T_{{j_k}}}} \right\| + \left\| {{T_{{j_k}}} - {T_0}} \right\|\left\| T \right\| \\
&\le \left\| {T_{{j_k}}^{ - 1} - T} \right\|\mathop {\sup }\limits_{k \in \mathbb{N}} \left\| {{T_{{j_k}}}} \right\| + \left\| {{T_{{j_k}}} - {T_0}} \right\|\left\| T
\right\|,
\end{align*}
so, it concludes that $T_{j_k}T^{-1}_{j_k}\to T_0T$. Since $T_{j_k}T^{-1}_{j_k}=I_n$, $\forall k$,
it follows that $T=T^{-1}_0$.

That $T^{-1}_j\to T^{-1}_0$ with respect to $d_n$ implies that
\[|T_j^{ - 1}u| \to |T_0^{ - 1}u|;\quad {\rm i.e.},\quad \rho_{T_jB}(u)\to \rho_{T_0B}(u), \quad {\rm uniformly\; for}\; u\in S^{n-1}.\]
Thus, $T_jB\to T_0B$ with respect to  ${\tilde \delta}_H$.
\end{proof}

\noindent \textbf{Lemma A.3.} \emph{Suppose $E_0\in\mathcal{E}^n$,  $\{E_j\}_{j\in\mathbb N}\subset \mathcal{E}^n$ and $V(E_j)=a$,
$\forall j \in\mathbb N$,  $a>0$. Then $E_j\to E_0$ with respect to
$\delta_H$, if and only if $E_j\to E_0$ with respect to ${\tilde\delta}_H$.}

\begin{proof}
Under the standard orthonormal basis, there exist unique  symmetric
and positive definite matrices $T_0, T_j $, such that $T_0B=E_0$ and $T_j B=E_j$.

We first prove the following implication:
\begin{equation}\label{operator implication}
T_j B\to T_0B\;,  {\rm with ~respect ~to}\; \delta_H.  \quad \Longrightarrow\quad T_j\to T_0\;,  {\rm with ~respect ~to}\; d_n.
\end{equation}

That $T_jB\to T_0B$ with respect to $\delta_H$ implies that
$\sup\{\|T_j\|:j\in\mathbb N\}<\infty$. Thus, to prove $T_j\to T_0$,
it suffices to prove any convergent subsequence
$\{T_{j_k}\}_{k\in\mathbb N}$ of $\{T_j\}$ converges to $T_0$.
Assume $T_{j_k}\to T$. Let $T_j=\left(t^{j}_{l,m}\right)_{1\le l, m\le n}$ and
$T=\left(t_{l,m}\right)_{1\le l, m\le n}$.  By our  assumption, $x\cdot T_{j_k}y\to x\cdot Ty$, for $x\in\mathbb R^n$.

Now, three observations are in order. First,   $t^{j}_{l,m}\to t_{l,m}$,   $\forall (l,m)$.
Thus, the symmetry of each $T_j$ implies the symmetry of $T$;
Second, $\det(T)=1$. Indeed, since $\det(T_j)$ is
a continuous function of the elements of $T_j$, from the fact
$t^{j}_{l,m}\to t_{l,m}$ for each $(l,m)$, and the fact
$\det(T_j)=1$ for each $j$, we obtain $\det(T)=1$;  Third, $T$ is positive definite. Indeed, from
 $x\cdot T_jx\to x\cdot Tx$, together with the positive definitive of each
$T_j$, we know that  $T$ is positive semi-definite. Added that $\det(T)=1$, it follows that $T$ is positive definite.

Since $T_{j_k}\to T$, by Lemma A.2 (1), it follows that
$T_{j_k}B\to TB$ with respect to $\delta_H$. Thus, $TB=T_0B$. Since $T_0$ is the
unique symmetric positive definite matrix such that
$T_0 B=E_0$, it concludes that $T=T_0$. Thus, $T_j\to T_0$.

Now, from implication (\ref{operator implication}) and Lemma A.2 (1), it yields the implication
\[ E_j\to E_0\; {\rm with ~respect ~to}\;  \delta_H. \quad \Longrightarrow\quad E_j\to E_0\; {\rm with ~respect ~to}\;, {\tilde\delta}_H. \]

Conversely, assume that $E_j\to E_0$ with respect to ${\tilde \delta}_H$;
i.e., $\rho_{E_j} \to\rho_{E_0} $,  uniformly on $S^{n-1}$. From
(\ref{polarity1}) and the equation
\[|{h_{E_j^*}}(u) - {h_{E_0^*}}(u)| = \frac{{|{\rho _{{E_j}}}(u) - {\rho _{{E_0}}}(u)|}}{{{\rho _{{E_j}}}(u){\rho _{{E_0}}}(u)}},\quad {\rm for}\; u\in S^{n-1},\]
it follows that $h_{E_j^*}\to h_{E_0^*}$, uniformly on $S^{n-1}$,
i.e., $E_j^*\to E_0^*$ with respect to $\delta_H$. Note that $E^*_j=T^{-1}_jB$,
$E^*_0=T^{-1}_0B$, and $T^{-1}_j$, $T^{-1}_0$ are
both symmetric  positive definite. From implication (\ref{operator implication}),
it concludes that $T^{-1}_j\to T^{-1}_0$. Thus, By Lemma A.2 (2), $T_j\to T_0$. Therefore,
by Lemma A.2 (1), $T_jB\to T_0B$ with respect to $\delta_H$.
\end{proof}

\bibliographystyle{amsplain}

\end{document}